\crefname{equation}{}{}
\DeclareSymbolFont{symbolsC}{U}{pxsyc}{m}{n}
\DeclareMathSymbol{\medcircle}{\mathbin}{symbolsC}{7}
\crefname{algocf}{Algorithm}{Algorithms}
\crefname{equation}{}{} 
\crefname{enumi}{}{} 
\colorlet{refkey}{orange!20}
\colorlet{labelkey}{blue!30}
\crefname{algocf}{Algorithm}{Algorithms}
\numberwithin{equation}{section}
\newtheorem{theorem}{Theorem}[section]
\newtheorem{proposition}[theorem]{Proposition}
\newtheorem{lemma}[theorem]{Lemma}
\crefname{claim}{Claim}{Claims}
\newtheorem{corollary}[theorem]{Corollary}
\newtheorem*{question*}{Question}
\newtheorem{fact}[theorem]{Fact}
\theoremstyle{definition}
\newtheorem{definition}[theorem]{Definition}
\newtheorem*{definition*}{Definition}
\theoremstyle{remark}
\newtheorem*{remark}{Remark}
\newcommand{\snorm}[1]{\lVert#1\rVert}
\newcommand{\sang}[1]{\langle #1 \rangle}
\newcommand{\mb}{\mathbb}
\newcommand{\mbm}{\mathbbm}
\newcommand{\mc}{\mathcal}
\newcommand{\mr}{\mathrm}
\newcommand{\on}{\operatorname}
\newcommand{\wt}{\widetilde}
\newcommand{\eps}{\varepsilon}
\let\originalleft\left
\let\originalright\right
\renewcommand{\left}{\mathopen{}\mathclose\bgroup\originalleft}
\renewcommand{\right}{\aftergroup\egroup\originalright}
\title{The intransitive dice kernel: $\frac{\mathbbm{1}_{x\ge y}-\mathbbm{1}_{x\le y}}{4} - \frac{3(x-y)(1+xy)}{8}$}
\author[A1]{Ashwin Sah}
\author[A2]{Mehtaab Sawhney}
\address{Department of Mathematics, Massachusetts Institute of Technology, Cambridge, MA 02139, USA}
\email{\{asah,msawhney\}@mit.edu}
\thanks{Sah and Sawhney were supported by NSF Graduate Research Fellowship Program DGE-1745302. Sah was supported by the PD Soros Fellowship. Sawhney was supported by the Churchill Foundation.}
\begin{document}
\maketitle
\begin{abstract}
Answering a pair of questions of Conrey, Gabbard, Grant, Liu, and Morrison, we prove that a triplet of dice drawn from the \emph{multiset model} are intransitive with probability $1/4+o(1)$ and the probability a random pair of dice tie tends toward $\alpha n^{-1}$ for an explicitly defined constant $\alpha$. This extends and sharpens the recent results of Polymath regarding the \emph{balanced sequence model}. We further show the distribution of larger tournaments converges to a universal tournamenton in both models. This limit naturally arises from the discrete spectrum of a certain skew-symmetric operator (given by the kernel in the title acting on $L^2([-1,1])$). The limit exhibits a degree of symmetry and can be used to prove that, for instance, the limiting probability that $A_i$ beats $A_{i+1}$ for $1\le i\le 4$ and that $A_5$ beats $A_1$ is $1/32+o(1)$. Furthermore, the limiting tournamenton has range contained in the discrete set $\{0,1\}$. This proves that the associated tournamenton is non-quasirandom in a dramatic fashion, vastly extending work of Cornacchia and H{\k{a}}z{\l}a regarding the continuous analogue of the balanced sequence model.

The proof is based on a reduction to conditional central limit theorems (related to work of Polymath), the use of a ``Poissonization'' style method to reduce to computations with independent random variables, and the systematic use of switching-based arguments to extract cancellation in Fourier estimates when establishing local limit-type estimates.
\end{abstract}

\section{Introduction}\label{sec:introduction}
We consider the following pair of models of random dice.
\begin{definition}\label{def:model}
A $n$-sided die is a sequence of numbers $(a_1,\ldots,a_n)\in [n]^{n}$ such that $\sum_{j=1}^na_j = n(n+1)/2$. In the \emph{multiset model}, the faces of a die $(a_1,\ldots,a_n)$ are sampled as a uniform random nondecreasing sequence in $[n]$ which satisfy $\sum_{j=1}^na_j = n(n+1)/2$. In the \emph{balanced sequence model} the faces of a die $(a_1,\ldots,a_n)$ are sampled as a uniform random sequence in $[n]$ such that $\sum_{j=1}^na_j = n(n+1)/2$.
\end{definition}

We also require a notion of when one die is said to ``beat'' another die. 
\begin{definition}\label{def:beats}
An $n$-sided die $(a_1,\ldots,a_n)$ \emph{beats} another die $(b_1,\ldots,b_n)$ if
\[\sum_{j=1}^n\sum_{k=1}^n\bigg(\mbm{1}_{a_j>b_k} + \frac{1}{2}\mbm{1}_{a_j=b_k}\bigg)>\frac{n^2}{2}.\]
Furthermore we say that die $(a_1,\ldots,a_n)$ \emph{ties} die $(b_1,\ldots,b_n)$ if 
\[\sum_{j=1}^n\sum_{k=1}^n\bigg(\mbm{1}_{a_j>b_k} + \frac{1}{2}\mbm{1}_{a_j=b_k}\bigg)=\frac{n^2}{2}.\]
\end{definition}

Our goal is to study \emph{dice tournaments}. Specifically, we sample $m$ independent random $n$-sided dice, either all from the multiset model or all from the balanced sequence model, and consider the outcome of each pair. We will think of $m$ as fixed while $n$ is tending to infinity.

The phenomenon of intransitive dice are exemplified by an example constructed by Efron in the 1960's \cite{Gar70}: consider the dice\footnote{Notice that as stated these dice do not satisfy the precise sum and face side bounds specified in \cref{def:model}.} 
\[A = (0,0,4,4,4,4),~B = (3,3,3,3,3,3),~C=(2,2,2,6,6),~D=(1,1,1,5,5,5).\]
Efron observed that in this example that $A$ beats $B$, $B$ beats $C$, $C$ beats $D$, and $D$ beats $A$: peculiarly, the relation ``beats'' is not transitive. This phenomenon gathered a substantial amount of popular interest \cite{Sc00,Gri17} including appearing in Martin Gardner's column in Scientific American \cite{Gar70}.

Mathematical work until recently had largely been focused on constructing tournaments with various properties \cite{MM67, FT00, AD17, Aki21, AS21, Yak22}; for instance work of Moon and Moser \cite{MM67} established that given any tournament $T$ there exists a set of dice (not necessarily satisfying the sum constraints of \cref{def:model}) which realize this tournament $T$.\footnote{As it turns out, our main results on random intransitive dice can be used to reprove a number of these results; we refer the reader to \cref{prop:construct}.} However, recently there has been significant interest in understanding random models of intransitive dice due to a set of conjectures raised in work of Conrey, Gabbard, Grant, Liu, and Morrison \cite{CGGLM16}.

In the work of Conrey, Gabbard, Grant, Liu, and Morrison \cite{CGGLM16}, the authors considered dice drawn from the multiset model. While a nice model for dice, one may ask why they do not consider the ``most natural'' model of dice where there is no additional condition on the sum. In this case it is straightforward to observe empirically (and can be proven rigorously) that with high probability whether die $A$ beats die $B$ can be determined simply by looking at the sum of the faces of the dice. Conrey, Gabbard, Grant, Liu, and Morrison \cite{CGGLM16} conducted empirical simulations in the multiset model. Based on these experimental results, they conjectured (\cite[Conjectures~1,2,3]{CGGLM16}) that as $n\to\infty$ (a) the probability a pair of dice tie is $o(1)$ (b) for a random triplet of dice $A$, $B$, and $C$ the probability that $A$ beats $B$, $B$ beats $C$, and $C$ beats $A$ is $1/8+o(1)$ and (c) the tournament associated to dice is quasirandom. (Conrey, Gabbard, Grant, Liu, and Morrison \cite{CGGLM16} equivalently formulate (c) in terms of the probability of various $m$-die tournaments.)

The first rigorous progress towards these conjectures was made by Polymath \cite{Pol22}, where they considered $n$-sided die drawn from not the multiset model but from the balanced-sequence model in \cref{def:model}. In this balanced sequence model, Polymath \cite{Pol22} was able to prove both conjectures (a) and (b) by showing that for almost all dice $A$ drawn from the balanced sequence model, approximately half of the dice from the balanced sequence model beat it. However, based on numerical calculations Polymath conjectured that (c) is false (see discussion surrounding \cite[Conjecture~1.3]{Pol22}). This suspicion was later confirmed in a continuous analogue of the balanced sequence model by work of Cornacchia and H{\k{a}}z{\l}a \cite{CH20} where die faces are sampled from $[0,1]$ uniformly at random. They proved this by studying four-cycle counts and proved that there exists a small absolute constant $\eps > 0$ such that the probability that $A$ beats $B$, $B$ beats $C$, $C$ beats $D$, and $D$ beats $A$ for $n$ large is at least $1/16 + \eps$ (higher than if the underlying tournament was quasirandom). Finally, in work of H{\k{a}}z\l a, Mossel, Ross, and Zheng \cite{HMRZ20}, the phenomenon of transitivity was investigated in the context of die faces which are drawn independently at random from a fixed distribution $\rho$ which is continuous. Remarkably, the phenomenon of intransitivity is extremely delicate and under mild conditions on $\rho$ the only distribution exhibiting any form of intransitivity is the uniform distribution. In all the rigorous work regarding probabilistic models of intransitive dice, the use of local central limit theorem type techniques has been crucial and this has been aided by the fact that the underlying faces of the die are independent modulo conditioning on a simple linear relation. We note this is no longer true in the original multiset model of Conrey, Gabbard, Grant, Liu, and Morrison \cite{CGGLM16} and this served as a key obstacle for extending results to the original model. 

Our main result is a complete characterization of the tournament associated with intransitive dice. Our results are sufficiently strong to naturally explain the results of Polymath \cite{Pol22} and Cornacchia and H{\k{a}}z{\l}a \cite{CH20} and point to a number of surprising phenomena which are not immediately obvious numerically.  

In order to state our main result we will require the definition of a certain operator on $L^2([-1,1])$.
\begin{definition}\label{def:operator}
Consider the skew-symmetric kernel $f\colon[-1,1]^2\to\mb{R}$ defined by
\[f(x,y) = \frac{\mbm{1}_{x\ge y}-\mbm{1}_{x\le y}}{4} - \frac{3(x-y)(1+xy)}{8}.\]
Define the operator $\mc{A}\colon L^2([-1,1])\to L^2([-1,1])$ (with the Lebesgue measure) by the map
\[\mc{A}(g) = \int_{-1}^1f(x,y)g(y)dy.\]
Let $\sigma_1\ge\sigma_2\ge\cdots$ denote real numbers so that $\{\pm i\sigma_\ell\colon\ell\ge 1\}$ forms the discrete spectrum of $\mc{A}$.
\end{definition}
\begin{remark}
Since $\mc{A}$ is real skew-symmetric we have that the spectrum is purely imaginary and coming in pairs. Furthermore, based on numerical computation, a closed form solution for $\sigma_j$ appears unlikely.
\end{remark}

Our main result captures the precise probability distribution associated with the dice tournament.

\begin{theorem}\label{thm:main}
Fix $m\ge 2$ and independently sample $n$-sided dice $A_1,\ldots,A_m$, either all from the multiset model or all from the balanced sequence model. Let $G^{(j)}$ for $1\le j\le m$ be infinite vectors of standard Gaussians and for $1\le j<k\le m$ let
\[H_{jk}=\sum_{\ell\ge 1}\sigma_\ell(G_{2\ell-1}^{(j)}G_{2\ell}^{(k)}-G_{2\ell}^{(j)}G_{2\ell-1}^{(k)}).\]
Then for any digraph $D$ on vertices $[m]$,
\[\lim_{n\to\infty}\mb{P}[A_j\emph{ beats }A_k\emph{ for all }jk\in E(D)]=\mb{P}[H_{jk}>0\emph{ for all }jk\in E(D)].\]
\end{theorem}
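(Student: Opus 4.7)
Introduce the centered statistic
$$T_{jk} := \sum_{s=1}^n\sum_{t=1}^n\Big(\mbm{1}_{A_{j,s}>A_{k,t}}+\tfrac{1}{2}\mbm{1}_{A_{j,s}=A_{k,t}}\Big)-\frac{n^2}{2} = \tfrac{1}{2}\sum_{s,t}\operatorname{sgn}(A_{j,s}-A_{k,t}),$$
so that ``$A_j$ beats $A_k$'' is the event $\{T_{jk}>0\}$. I will prove joint convergence $n^{-1}(T_{jk})_{1\le j<k\le m}\Rightarrow (H_{jk})_{1\le j<k\le m}$ in distribution. Each $H_{jk}$ is a nondegenerate quadratic form in i.i.d.\ Gaussians and therefore has no atom at $0$, so the portmanteau theorem applied to the open event $\bigcap_{jk\in E(D)}\{T_{jk}>0\}$ will complete the proof.

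To expose $f$, rescale die faces via $a\mapsto 2a/n-1$ and let $\mu_j$ be the rescaled empirical measure of $A_j$, decomposed as $\mu_j = U + n^{-1/2}\nu_j$ where $U$ is the uniform probability measure on $[-1,1]$. Then $T_{jk} = n^2 \iint g(x,y)\,d\mu_j(x)\,d\mu_k(y)$ with $g(x,y)=\tfrac{1}{2}\operatorname{sgn}(x-y)$. The constant term $\iint g\,dU\,dU$ vanishes by skew-symmetry, and because $\int g(x,y)\,dU(y)$ is a polynomial of degree one in $x$, the linear-in-$\nu$ terms reduce to multiples of $\nu_j(1)$ and $\nu_j(x)$, both forced to $0$ by the sum constraint $\sum_s A_{j,s}=n(n+1)/2$ and by $\mu_j$ being a probability measure. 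Hence $T_{jk}/n = \iint g\,d\nu_j\,d\nu_k + o(1)$. One may then replace $g$ by $f$ because the subtracted polynomial $\tfrac{3(x-y)(1+xy)}{8}$ expands as a sum of tensors $p(x)\otimes q(y)$ every one of which is annihilated by $\nu(1)=\nu(x)=0$; this is the reason $f$, rather than $g$, is the natural kernel for the spectral description of the limit.

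The analytic core is a joint conditional CLT: tested against any fixed finite collection of functions in $L^2([-1,1])$, the fluctuation measures $(\nu_j)_{j=1}^m$ converge jointly in distribution to independent copies of a pinned Gaussian white noise on $[-1,1]$ -- mean zero and orthogonal to $1$ and $x$. Writing $\nu_j^\infty = \sum_\ell G_\ell^{(j)}\phi_\ell$ in an orthonormal basis $(\phi_\ell)$ of $\{1,x\}^\perp \subset L^2([-1,1])$, the Hilbert--Schmidt norm bound on $\mc{A}$ lets one pass from test-function convergence to convergence of the full bilinear form $\iint f\,d\nu_j\,d\nu_k$. Choosing the basis so that $\mc{A}$ is block-diagonalized into $2\times 2$ blocks encoding the complex eigenvalue pairs $\pm i\sigma_\ell$ (via the spectral theorem for real skew-symmetric Hilbert--Schmidt operators) then yields exactly $H_{jk}=\sum_\ell \sigma_\ell(G_{2\ell-1}^{(j)}G_{2\ell}^{(k)}-G_{2\ell}^{(j)}G_{2\ell-1}^{(k)})$.

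The main obstacle is this conditional CLT. For the balanced sequence model, each die is i.i.d.\ uniform on $[n]$ conditioned on one linear constraint (the sum); the natural approach is Fourier inversion, writing the conditional probability as a ratio involving the characteristic function of $\sum_s A_{j,s}$ and establishing a local CLT. The hard part is uniform control of the joint characteristic function of several dice at moderate Fourier frequencies, where crude $L^2$-flattening bounds are not strong enough. Here I would follow the paper's suggestion and employ \emph{switching-based arguments} -- explicit involutions swapping pairs of face values to produce cancellation in the Fourier sums. For the multiset model the faces are constrained to be nondecreasing and are thus not independent even without the sum constraint; I would first Poissonize by replacing the multiplicity of each face value by an independent Poisson variable and then condition on both the total face count and the total face sum, reducing to a two-dimensional local CLT on an i.i.d.\ Poisson structure that is amenable to the same Fourier/switching approach.
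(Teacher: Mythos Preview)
Your high-level strategy matches the paper's: express the margin as a bilinear form in the frequency counts, use Fourier inversion together with switching arguments to handle the linear conditioning, transfer to Gaussians via a Lindeberg exchange, and then diagonalize the skew-symmetric kernel. Your empirical-measure framing is a clean continuous recasting of the paper's discrete matrix $M_n^\ast$, and your explanation of why $f$ rather than $g=\tfrac{1}{2}\operatorname{sgn}(x-y)$ is the natural kernel (the subtracted polynomial being a sum of tensors with a factor of $1$ or $x$ in one slot) is exactly the content of \cref{def:dice-matrix} and \cref{lem:beats-matrix}.

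There is one concrete error in the multiset case. Poissonizing the multiplicities does \emph{not} recover the multiset model: if $m_1,\ldots,m_n$ are i.i.d.\ $\mr{Pois}(1)$ and you condition on $\sum_i m_i=n$, the resulting law on multiplicity vectors is multinomial, i.e., each vector is weighted by $\binom{n}{m_1,\ldots,m_n}$ --- that is precisely the balanced sequence model again, not the multiset model. The multiset model is \emph{uniform} on multiplicity vectors satisfying the two linear constraints, so you need $\prod_i p(m_i)$ to be constant on $\{\sum m_i=n\}$; this forces $p(k)\propto\theta^k$, i.e., a geometric law. The paper obtains this via a bijection between nondecreasing sequences and right-up lattice paths, in which the multiplicities become the horizontal run-lengths and are therefore i.i.d.\ $\mr{Geom}(1/2)$ before conditioning (\cref{lem:crucial}). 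Once you replace Poisson by geometric in this step, the rest of your plan goes through essentially as written, with the model-dependent factor $\mr{Var}[\Delta]\in\{1,2\}$ appearing in the scaling of $T_{jk}/n$ (which of course does not affect the sign events).
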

\begin{remark}
$H_{jk}$ is defined by a convergent sum almost surely due to the bound $\sum_{\ell\ge t}\sigma_\ell^2=O(1/t)$, which we prove in \cref{lem:operator-decay} (\cref{M7}), and an application of Borel--Cantelli to the random events $\mc{E}_t$ defined by $|\sum_{t\le\ell<2t}\sigma_\ell(G_{2\ell-1}^{(j)}G_{2\ell}^{(k)}-G_{2\ell}^{(j)}G_{2\ell-1}^{(k)})|\ge t^{-1/4}$ for $t$ ranging over powers of $2$. Indeed, $\mb{P}[\mc{E}_t]=O(t^{-1/2})$ by the Chebyshev inequality, which has finite sum over powers of $2$, so all but finitely many $\mc{E}_t$ hold and the convergence follows. Alternatively, we can interpret each individual $H_{jk}$ as a Gaussian with random variance equal to the inverse of an almost surely convergent weighted sum of chi-squared distributions.
\end{remark}
\begin{remark}
The proof of \cref{thm:main} actually shows something stronger, which is that $(H_{jk})_{1\le j<k\le m}$ is the limiting distribution of $(c\cdot\mr{margin}_{jk}/n)_{1\le j<k\le m}$, where $c=1/2$ for the multiset model and $c=1$ for the balanced sequence model and where $\mr{margin}_{jk}$ is by how much die $A_j$ beats $A_k$ (i.e., how many more pairs than $n^2/2$, possibly negative, $A_j$ beats $A_k$ for).
\end{remark}

We note that the statement of \cref{thm:main} may appear slightly strange and difficult to work with; however, a number combinatorial consequences follow in a routine manner given \cref{thm:main}.
\begin{corollary}\label{cor:symmetry}
Sample $m$ independent random $n$-sided dice $A_1,\ldots, A_m$ either all from the multiset model or all from the balanced sequence model. Then for any digraph $D$ on vertices $[m]$ let $D_v$ denote the digraph where all edges emanating from the vertex $v\in[m]$ are reversed. We have
\[\lim_{n\to\infty}\mb{P}[A_j\emph{ beats }A_k\emph{ for all }jk\in E(D)]=\lim_{n\to\infty}\mb{P}[A_j\emph{ beats }A_k\emph{ for all }jk\in E(D_v)]\]
for all $v\in [m]$. Furthermore let $D'$ denote the digraph where all the edges of $D$ are reversed. We have 
\[\mb{P}[A_j\emph{ beats }A_k\emph{ for all }jk\in E(D)]=\mb{P}[A_j\emph{ beats }A_k\emph{ for all }jk\in E(D')].\]  
\end{corollary}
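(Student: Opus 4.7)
The plan is to invoke \cref{thm:main} to reduce the first equality to a symmetry of the limiting Gaussian vector $(H_{jk})$, and to prove the second equality by a direct finite-$n$ symmetry of the dice models. Throughout I extend notation by setting $H_{kj}:=-H_{jk}$ for $k>j$, consistent with swapping the roles of $j$ and $k$ in the defining formula.

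For the single-vertex reversal, applying \cref{thm:main} to both sides reduces the claim to $\mathbb{P}[H_{jk}>0\text{ for all }jk\in E(D)]=\mathbb{P}[H_{jk}>0\text{ for all }jk\in E(D_v)]$. Fix $v$ and consider the map on the underlying Gaussians that negates $G^{(v)}$ and fixes every other $G^{(j)}$; this is measure-preserving on the product Gaussian distribution. Because each summand of $H_{jk}$ is bilinear in $(G^{(j)},G^{(k)})$, the map flips the sign of $H_{jk}$ precisely when $v\in\{j,k\}$ and preserves $H_{jk}$ otherwise. Since $D_v$ is $D$ with every edge incident to $v$ reversed, and since reversing an edge $jk$ changes the constraint $H_{jk}>0$ into $H_{kj}>0$, i.e., $H_{jk}<0$, the event $\{H_{jk}>0:jk\in E(D_v)\}$ is exactly the preimage of $\{H_{jk}>0:jk\in E(D)\}$ under this measure-preserving transformation, giving equal probabilities.

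For the reversal of all edges, I argue at finite $n$. The complementation $a\mapsto n+1-a$ applied coordinatewise preserves the sum constraint $\sum_j a_j=n(n+1)/2$ and is a bijection on the sample space in both the multiset model and the balanced sequence model. By independence of the $m$ dice, simultaneously replacing every $A_i$ by $\widetilde A_i:=n+1-A_i$ is therefore a measure-preserving transformation on the joint distribution. For any pair of complemented dice, $\widetilde a_j>\widetilde b_k$ iff $a_j<b_k$ and equality of faces is preserved, so the transformation reverses the ``beats'' relation on every ordered pair simultaneously. It therefore maps the event ``$A_j$ beats $A_k$ for all $jk\in E(D)$'' bijectively onto ``$A_j$ beats $A_k$ for all $jk\in E(D')$'', establishing the equality at every $n$.

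No substantial obstacle is anticipated: the corollary is a routine consequence of \cref{thm:main} combined with two elementary symmetries. The one subtlety worth flagging is that complementing a single die does not flip the pairwise ``beats'' relations at finite $n$ (since only one side of each comparison is altered), which is why the single-vertex reversal seems to require passing to the Gaussian limit, where the bilinear structure of each $H_{jk}$ in $(G^{(j)},G^{(k)})$ makes the required symmetry exact.
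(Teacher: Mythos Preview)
Your approach matches the paper's exactly: negate $G^{(v)}$ for the single-vertex reversal (using \cref{thm:main}), and complement all dice at finite $n$ for the full reversal. One small slip: in the multiset model the sample space consists of \emph{nondecreasing} sequences, so coordinatewise $a\mapsto n+1-a$ produces a nonincreasing sequence and is not a bijection on that space as written; you need to also reverse the order, i.e., send $(a_1,\ldots,a_n)$ to $(n+1-a_n,\ldots,n+1-a_1)$, as the paper does. This does not affect the beats relation (which depends only on the multiset of faces), so the argument goes through unchanged once this cosmetic fix is made.
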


From \cref{thm:main} we see that the probability a pair of dice tie is $o(1)$. Then, considering $D$ to be a directed cycle on $3$ vertices and comparing to $D_v$, along with using permutation symmetry, we immediately see that all labelled $3$-vertex tournaments appear asymptotically with the same probability. Thus a random triplet of dice is intransitive with probability $1/4+o(1)$. This immediately implies the conjectures of Conrey, Gabbard, Grant, Liu, and Morrison \cite[Conjectures~1,~2]{CGGLM16} (and recovers the results of Polymath which proved these two facts in the balanced sequence model).

We can deduce that a forest with $e$ edges occurs with probability $2^{-e}+o(1)$ by iteratively applying \cref{cor:symmetry} to a leaf (and using that ties occur negligibly). We can also deduce that any orientation of a labeled $(2k+1)$-cycle occurs with the same limiting probability $2^{-(2k+1)}+o(1)$ by repeatedly applying the two operations specified in \cref{cor:symmetry}. These are perhaps surprising given the results of Cornacchia and H{\k{a}}z{\l}a \cite{CH20} showing a lack of quasirandomness for continuous dice models. We conjecture, however, that the only equalities between complete tournaments in the limit can be achieved via these symmetries and permutation symmetry. 

For our next corollary, we will require the tournament analogue of a graphon. We refer the reader to \cite[Chapter~4]{Zha23} for a more extensive discussion of graphons.
\begin{definition}\label{def:tournamenton}
Given two measurable functions $U,W\colon[0,1]^2\to\mb{R}$, define the \emph{cut metric} as
\[\delta_\Box(U,W) = \inf_{\phi}\sup_{S,T\in [0,1]}\bigg|\int_{S\times T}U(x,y) - W(\phi(x),\phi(y))dxdy\bigg|\]
where the infimum $\phi$ is taken over all invertible measure preserving maps. We define the \emph{tournamentons} $T_0$ to be the space of all functions $T\colon[0,1]^2\to[0,1]$ such that $T(x,y) = 1-T(y,x)$ and let $\wt{T_0}$ denote the space of tournamentons modulo identifying tournamentons with cut distance $0$.
\end{definition}

As is standard one can identify a graph $G$ with an associated graphon, and similar for a tournamenton, by embedding the adjacency matrix into $[0,1]^2$ (for the tournamenton this requires putting values of $1/2$ on the diagonal); we will carry this transformation out without comment.
\begin{corollary}\label{cor:convergence}
Consider the graph $T_n$ where the vertex set is either (a) all nondecreasing sequences $(a_1,\ldots,a_n)$ in $[n]^n$ such that $\sum_{j=1}^na_j = n(n+1)/2$ or (b) all sequences $(a_1,\ldots,a_n)$ in $[n]^n$ such that $\sum_{j=1}^na_j = n(n+1)/2$, and where there is a directed edge from one sequence to another if the corresponding die beats the other.

Then $T_n$ converges under the cut metric to a tournamenton $\mc{T}$ (which is the same in cases (a) and (b)). Furthermore, the preimage of the set $\{0,1\}$ under $\mc{T}$ has measure $1$.
\end{corollary}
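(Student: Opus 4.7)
The plan is to exhibit the limit tournamenton $\mathcal{T}$ explicitly from the Gaussian objects of \cref{thm:main}, verify that its subgraph densities match the right-hand side of \cref{thm:main}, and then invoke the standard moments-imply-cut-distance convergence for tournamentons.

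First, I would fix a measurable bijection $\phi\colon[0,1]\to\mathbb{R}^{\mathbb{N}}$ under which Lebesgue measure pushes forward to the product of standard Gaussians, and define
\[\mathcal{T}(x,y) = \mathbbm{1}\bigg[\sum_{\ell\ge 1}\sigma_\ell\big(\phi(x)_{2\ell-1}\phi(y)_{2\ell}-\phi(x)_{2\ell}\phi(y)_{2\ell-1}\big)>0\bigg],\]
setting $\mathcal{T}=1/2$ on the null set where the series fails to converge absolutely or equals zero. The same Borel--Cantelli argument noted after \cref{thm:main} shows the series converges for almost every $(x,y)$, and the bilinear form is skew-symmetric in $(\phi(x),\phi(y))$, so $\mathcal{T}(x,y)=1-\mathcal{T}(y,x)$ holds a.e.\ and $\mathcal{T}\in T_0$.

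Second, for any labelled digraph $D$ on $[m]$, Fubini's theorem gives
\[\int_{[0,1]^m}\prod_{jk\in E(D)}\mathcal{T}(x_j,x_k)\,dx_1\cdots dx_m=\mathbb{P}\bigl[H_{jk}>0\text{ for all }jk\in E(D)\bigr],\]
since the product Gaussian measure on $(\mathbb{R}^{\mathbb{N}})^m$ is precisely the joint law of the $(G^{(j)})_{j\in[m]}$. Viewing $T_n$ as a tournamenton by placing $\tfrac{1}{2}$ on ties, its homomorphism density for $D$ equals $\mathbb{P}[A_j\text{ beats }A_k\text{ for all }jk\in E(D)]$ up to an error bounded by the probability that some pair $jk$ ties, which is $o(1)$ by applying \cref{thm:main} with $D=\{jk\}$ and noting $H_{jk}$ has a continuous distribution. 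Thus \cref{thm:main} yields $t(D,T_n)\to t(D,\mathcal{T})$ for every labelled digraph $D$, which in particular covers all finite tournaments.

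Third, convergence of all tournament densities is equivalent to convergence in the cut metric to the corresponding limit object, by the direct tournamenton analogue of the Lov\'asz--Szegedy theorem (see \cite[Chapter~4]{Zha23}); so $T_n\to\mathcal{T}$ under $\delta_\Box$. Finally, the claim that $\mathcal{T}^{-1}(\{0,1\})$ has measure $1$ is immediate from the definition of $\mathcal{T}$ as an indicator: conditional on $(\phi(x)_{2\ell})_\ell$ and $(\phi(y)_\ell)_\ell$, the random variable in the indicator is Gaussian in $(\phi(x)_{2\ell-1})_\ell$ with a.s.\ positive variance (since $\sigma_1>0$ and the $\phi(y)$-coordinates are a.s.\ not all zero), so it vanishes with probability $0$. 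The main conceptual step is the construction of $\mathcal{T}$ via a Gaussian pushforward; everything else is bookkeeping, and the only mild care is converting between tournament densities and the ``directed edge subset'' probabilities provided by \cref{thm:main}, handled via the tie estimate above.
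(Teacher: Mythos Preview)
Your argument is correct and takes a genuinely different, more direct route than the paper. The paper does not construct $\mathcal{T}$ explicitly; instead it invokes directed-graph limit theory (citing \cite{DJ08,Th18}) to obtain \emph{some} limit tournamenton, and then proves the $\{0,1\}$-valued claim via an entropy criterion (\cref{lem:measure-theory}): one must show that an $M$-vertex $\mathcal{T}$-random tournament lands in a set of size $2^{o(M^2)}$ with high probability. This is established by revealing a truncated, discretized version of the Gaussians $(G_\ell^{(j)})_{\ell\le 2\lfloor M^{1/2}\rfloor}$, using Carbery--Wright anticoncentration (\cref{thm:gauss-anti}) to argue that most pairwise outcomes are determined by this data, and counting buckets.

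Your approach sidesteps all of this by building the limit as an indicator from the outset via a measure-space isomorphism $\phi\colon([0,1],\mr{Leb})\to(\mb{R}^{\mb{N}},\mc{N}(0,1)^{\otimes\mb{N}})$; once the densities are matched, the $\{0,1\}$-valued claim reduces to the trivial observation that a nondegenerate Gaussian is a.s.\ nonzero. What the paper's longer argument buys is a quantitative statement: it yields a polynomial relation $M=\Omega(\eps^{-20})$ between the number of dice and the entropy defect, which your construction does not directly provide. One minor remark: for the equivalence of subgraph-density convergence and cut-metric convergence in the directed setting you should cite \cite{DJ08} (or similar) rather than \cite[Chapter~4]{Zha23}, which treats the undirected case.
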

\begin{remark}
Technically $T_n$ may not be a tournament but a partial tournament due to ties, so \emph{a priori} we only have convergence to a \emph{partial tournamenton}; however, a consequence of \cref{thm:main} discussed above is that ties occur with probability $o(1)$ so we will obtain a genuine tournamenton in the limit.
\end{remark}

Note that the density of digraph $D$ in the tournament $T_n$ is precisely the probability that the associated digraph of dice beating other dice occurs when sampling from either the multiset model (case (a)) or the balanced sequence model (case (b)). Thus the density of digraph $D$ in the limit tournament $\mc{T}$ is the limiting probability described by \cref{thm:main}.

The claim that the preimage of the set $\{0,1\}$ has measure $1$ is equivalent to the fact that for every $\eps>0$ there is a $k$ such that a $\mc{T}$-random tournament (defined analogously to a $W$-random graph \cite[Section~4.4]{Zha23}) on $k$ vertices lies in a set of size $2^{\eps k^2}$ with at probability at least $1-\eps$. This equivalence is detailed in \cref{lem:measure-theory}; we will prove \cref{cor:convergence} through this equivalence and prove that one can take a polynomial relation between $k$ and $\eps$. The fact that $\mc{T}\neq 1/2$ corresponds to a lack of quasirandomness. We also establish that the directed $4$-cycle in particular occurs with limiting probability greater than $1/16$ in \cref{prop:4-cycle}, and show that all digraphs $D$ have positive density in $\mc{T}$ in \cref{prop:construct}.

Finally, we also precisely quantify the probability that a given pair of dice are tied beyond the $o(1)$ guaranteed as a consequence of \cref{thm:main}.
\begin{theorem}\label{thm:ties}
Let $A$ and $B$ be dice which are jointly drawn independently from the multiset model. Let $\alpha = 2^{-5/2}\pi^{-1/2}\mb{E}[(\sum_{\ell\ge 1}\sigma_\ell^2(Z_\ell^2+Z_\ell'^2))^{-1/2}]$ where $Z_\ell,Z_\ell'\sim\mc{N}(0,1)$. We have
\[\mb{P}[A\emph{ ties }B]=(\alpha+o(1))n^{-1}\]
for some absolute constant $c=c_{\ref{thm:ties}}>0$. If instead $A$ and $B$ are jointly drawn independently from the balanced sequence model then
\[\mb{P}[A\emph{ ties }B]=(2\alpha+o(1))n^{-1}.\]
\end{theorem}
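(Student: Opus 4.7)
The approach is to combine the strong form of \cref{thm:main} (namely that $c\cdot\mathrm{margin}_{jk}/n$ converges in distribution to $H_{jk}$, with $c=1/2$ in the multiset model and $c=1$ in the balanced sequence model) with a pointwise local central limit theorem for the tie margin evaluated at the origin.

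I would first identify the density at zero of the limit $H:=H_{12}$. Conditionally on the Gaussians $(G^{(1)}_i)_{i\ge 1}$ of the first die, $H$ is a centered Gaussian in the Gaussians of the second die with variance
\[V:=\sum_{\ell\ge 1}\sigma_\ell^2\bigl((G^{(1)}_{2\ell-1})^2+(G^{(1)}_{2\ell})^2\bigr),\]
which has the same law as $\sum_{\ell\ge 1}\sigma_\ell^2(Z_\ell^2+Z_\ell'^2)$. Integrating the conditional Gaussian density at $0$ gives $p_H(0)=\tfrac{1}{\sqrt{2\pi}}\,\mathbb{E}[V^{-1/2}]$, and $\mathbb{E}[V^{-1/2}]<\infty$ because $V\ge\sigma_1^2(Z_1^2+Z_1'^2)$ and the reciprocal square root of a chi-squared with two degrees of freedom is integrable. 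Since $\mathrm{margin}/n$ converges in distribution to $H/c$, the limiting density of $\mathrm{margin}/n$ at zero is $c\cdot p_H(0)$.

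Next, since $2(W-n^2/2)=\#\{(j,k):a_j>b_k\}-\#\{(j,k):a_j<b_k\}$ is an integer, $\mathrm{margin}=W-n^2/2$ lies in $\tfrac12\mathbb{Z}$ with spacing $\tfrac12$. Granting a pointwise local central limit theorem at the origin, one obtains
\[\mathbb{P}[A\text{ ties }B]=\mathbb{P}[\mathrm{margin}=0] = \frac{1/2}{n}\cdot c\cdot p_H(0)\cdot(1+o(1)) = \frac{c\,\mathbb{E}[V^{-1/2}]}{2\sqrt{2\pi}\,n}(1+o(1)).\]
Plugging in $c=1/2$ yields exactly $\alpha/n$ with $\alpha=2^{-5/2}\pi^{-1/2}\mathbb{E}[V^{-1/2}]$, while $c=1$ yields $2\alpha/n$, matching the stated constants in both models.

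The hard part is the local central limit theorem itself. Fourier inversion on the integer-valued $2\cdot\mathrm{margin}$ gives
\[\mathbb{P}[\mathrm{margin}=0]=\frac{1}{2\pi}\int_{-\pi}^{\pi}\mathbb{E}\bigl[e^{2it\cdot\mathrm{margin}}\bigr]\,dt,\]
and one splits into a bulk $|t|\le T/n$ and a tail $T/n<|t|\le\pi$. On the bulk, substituting $t=s/n$ and combining characteristic function convergence implicit in \cref{thm:main} with uniform integrability (from moment bounds on $\mathrm{margin}/n$) reproduces the main term $\tfrac12\cdot c\cdot p_H(0)/n$. The tail is the delicate step: for the multiset model the faces of a die are not independent, so direct second-moment estimates do not yield enough oscillation, and one must use the switching-based Fourier estimates highlighted in the abstract---pairs of face-swaps produce cancelling phases that force the characteristic function to decay uniformly on $[T/n,\pi]$, in a scheme parallel in spirit to but quantitatively sharper than the proof of \cref{thm:main}. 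The balanced sequence model is handled by the same template, with the single change $c=1/2\mapsto c=1$ accounting for the doubling of the constant.
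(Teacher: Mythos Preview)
Your identification of the limiting density $p_H(0)=(2\pi)^{-1/2}\mathbb{E}[V^{-1/2}]$ and the constant bookkeeping with the lattice spacing $1/2$ and the scaling $c\in\{1/2,1\}$ is correct, and the paper itself acknowledges (in the remark after \cref{thm:ties}) that a full local limit theorem for $\mathrm{margin}_{12}$ is obtainable by the techniques of \cref{sec:ties-proof}, with \cref{thm:ties} being the $x=0$ case. So your overall strategy is viable.

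However, the paper deliberately does \emph{not} go the direct route you outline. Instead it conditions on one die $B$: by \cref{lem:coefficient-sequence} the associated coefficient sequence is coarse with probability $1-n^{-\omega(1)}$, and then \cref{lem:conditional-mass} gives the conditional tie probability as an explicit constant times $\snorm{M_n^\ast\wt{b}}_2^{-1}$. This reduces a bilinear local limit problem to a \emph{linear} one (a local CLT for $T_3$ given $T_1=T_2=0$), after which the remaining task is to average $\snorm{M_n^\ast\wt{b}}_2^{-1}$ over the die distribution. That average is a genuinely quadratic statistic, and the paper circumvents a second quadratic local limit argument by a sampling trick (\cref{lem:resample-estimate}): approximate $\snorm{M_n^\ast\wt{b}}_2^2$ by $nT^{-1}\sum_{k\le T}\sang{M_n^\ast\wt{b},\vec{e}_{j_k}}^2$ for $T$ random coordinates, which reduces everything to a finite collection of linear forms whose joint law is transferred to Gaussians via \cref{lem:fourier-coeff}. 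The singularity of $y\mapsto y^{-1/2}$ at zero is handled by a separate anticoncentration lemma (\cref{lem:lower-bound}).

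The gap in your argument is precisely the tail $T/n<|t|\le\pi$. You invoke ``switching-based Fourier estimates'' but do not engage with why they suffice here. After conditioning on one die and Fourier-inverting the four linear constraints, you would need \cref{lem:theta3-high,lem:theta3-mid} with the coefficient sequence coming from the \emph{unconditioned} frequency counts of the other die; this requires verifying the coarseness properties \cref{S5,S6} in that regime and carefully tracking how the coefficient sequence interacts with the large-$\xi$ regions of the four auxiliary Fourier variables simultaneously. The paper's indirect route was chosen precisely to sidestep this bookkeeping (see the discussion in \cref{sub:main-proof} and the comment about Berkowitz-type tools in \cref{sec:sketch}). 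Your sketch is on the right track but does not yet constitute a proof of the hard step.
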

\begin{remark}
This can be heuristically reconstructed by considering the second remark after \cref{thm:main} with $m=2$. $H_{12}$ is the limiting distribution of $c\cdot\mr{margin}_{12}/n$ (where $c=1/2$ for the multiset model and $c=1$ for the balanced sequence model). If we imagine that the mass of this distribution was discretized in the obvious way along all possible values of $\mr{margin}_{12}$ in the lattice $\mb{Z}/2$, we obtain the above. In fact, one can use the techniques in \cref{sec:ties-proof} to show a local limit theorem for $\mr{margin}_{12}$:
\[\mb{P}[\mr{margin}_{12}=x]=\frac{c}{2n}f_{H_{12}}(cx/n)+o(1/n)\]
uniformly for $x\in\mb{Z}/2$ where $f_{H_{12}}$ is the probability density function of $H_{12}$. We do not prove this here since the technical details are quite involved, but note that \cref{thm:ties} is the $x=0$ case.
\end{remark}

We interpret the constant $\alpha$ as the (inverse) standard deviation around the best linear approximant (in the sense of Ordinary Least Squares) to a conditioned Brownian motion at the end of \cref{sec:ties-proof}.

In general, a tournament $T$ with exactly $t$ ties among $m$ dice, and $\binom{m}{2}-t$ prescribed outcomes of the other match-ups, where $m$ and $0\le t\le\binom{m}{2}$ are fixed, should occur with probability $(c_T+o(1))n^{-t}$. We do not pursue such a general statement here though similar techniques may apply and a probabilistic interpretation of the constant $c_T$ should arise from \cref{thm:main} similar to the case $(m,t)=(2,1)$ above.

\subsection{First steps, proof outline, and organization}\label{sub:organ}
Our techniques at a high level involve Fourier analysis in the style of local limit theorems. In particular, we study various ``conditional Fourier coefficients'' in detail to show that the normalized joint distribution of ``victory margins'' (see the second remark following \cref{thm:main}) converges to $(H_{jk})_{1\le j<k\le m}$. We also use a more detailed analysis involving additional control on the ``coarseness'' of certain modified statistics of random dice to get very good local control of the event that there is a precise tie. We defer a more detailed proof outline to \cref{sec:sketch} after developing the basic tools to attack the problem in \cref{sec:count}.

The first step in proving \cref{thm:main} (in the multiset model) relies on observing that while the dice face in the multiset model are nonindependent, the frequency statistics can be given a natural ``near-independent'' model. This ultimately relies on a well-known bijection between the multiset model and the simple random walk; the details appear in \cref{lem:crucial}. We note that in the context of the balanced sequence model, \cref{lem:crucial} reduces to the ``Poissonization'' trick. Given this we interpret the ``beats'' relation through frequency counts (\cref{lem:beats,lem:beats-matrix}) and the operator in \cref{def:operator} arises naturally. These initial steps are carried out in \cref{sec:count}, and provide the key starting point to understand the necessary distributions from a Fourier perspective.

Given the setup in \cref{sec:count}, we provide a heuristic outline of the argument for \cref{thm:main,thm:ties} and an overview of the various consequences in \cref{sec:sketch}. We then collect a list of technical preliminaries which will be used throughout the paper in \cref{sec:prelim}. We prove various Fourier coefficient bounds used in the proofs of \cref{thm:main,thm:ties} in \cref{sec:fourier}. We prove \cref{thm:main} in \cref{sec:translate-fourier}, modulo a technical ingredient proven in \cref{sec:coefficients}, and then collect various consequences following from \cref{thm:main} in \cref{sec:consequence}. Finally we prove \cref{thm:ties} in \cref{sec:ties-proof}.

\subsection*{Notation}
We write $f=O(g)$ to mean that $f\le Cg$ for some absolute constant $C$, and $g=\Omega(f)$ and $f\lesssim g$ to mean the same. We write $f=o(g)$ if for all $c > 0$ we have $f\le cg$ once the implicit growing parameter (typically $n$) grows large enough, and $g=\omega(f)$ means the same. Subscripts imply a dependence of these implicit constants on those parameters. We use $\overset{d.}{=},\overset{d.}{\rightarrow}$ for distributional equality and limits, respectively.

For $\mu\in\mb{R}^d$ and positive semidefinite $\Sigma\in\mb{R}^{d\times d}$ we let $\mc{N}(\mu,\Sigma)$ be the Gaussian vector with mean $\mu$ and covariance matrix $\Sigma$. For finite matrices $M$ we will use $M_{ij}$ to denote the entry in the $(i,j)$ position. Throughout this paper all logarithms are base $e$.

\subsection*{Acknowledgments}
We thank Timothy Gowers, Michael Ren, and Mark Sellke for useful comments and discussions.

\section{Count statistics of balanced sequence model and multiset model}\label{sec:count}
The idea to get a handle on the multiset model is to create a procedure for sampling which derives from a sequence of independent random variables. We will require the notion of a frequency statistic which will be crucial for our purposes.
\begin{definition}\label{def:freq}
Given an $n$-sided die $A = (a_1,\ldots,a_n)$ define the \emph{frequency counts} of $A$ to be
\[\wt{a}_i = |\{j\colon a_j = i\}|\]
for $1\le i\le n$.
\end{definition}

The key point is the following distributional claim regarding the frequency counts of a die drawn from either multiset or balanced sequence model, which relates these models to a sequence of either geometric (in the multiset case) or a sequence of Poisson random variables (in the balanced sequence case). In the balanced sequence case this is essentially equivalent to the ``Poissonization'' trick.
\begin{lemma}\label{lem:crucial}
We have the following:
\begin{itemize}
    \item If $B$ is drawn from the multiset model we have 
    \[(\wt{b}_1,\ldots,\wt{b}_n) \overset{d.}= (G_1,\ldots,G_n)\]
    where $G_j$ are sampled as follows: draw independent $\mr{Geom}(1/2)$\footnote{Here $X\overset{d.}{=}\mr{Geom}(1/2)$ means $\mb{P}[X = k] = (1/2)^{k+1}$ for $k\in\{0,1,\ldots\}$. Note this is $0$-indexed, corresponding to the number of ``failures'' before a repeatedly flipped fair coin shows heads, instead of the number of ``trials''.} random variables $G_i$ and then condition on $\sum_{j=1}^nG_j = n$ and $\sum_{j=1}^njG_j = n(n+1)/2$.
    \item If $B$ is drawn from the balanced sequence model we have 
    \[(\wt{b}_1,\ldots,\wt{b}_n) \overset{d.}= (P_1,\ldots,P_n)\]
    where $P_j$ are sampled as follows: draw independent $\mr{Pois}(1)$ random variables $P_j$ and then condition on $\sum_{j=1}^nP_j = n$ and $\sum_{j=1}^njP_j = n(n+1)/2$.
\end{itemize}
\end{lemma}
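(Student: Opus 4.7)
The plan is to compute the joint probability mass function of $(\wt{b}_1,\ldots,\wt{b}_n)$ in each model directly, and then compare with the corresponding conditional law of iid random variables, observing that the two linear constraints match up in each case so that only the unconditional product law needs to be understood.

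First I would set up the combinatorial description of frequency counts. In the multiset model, an $n$-sided die is a uniformly random nondecreasing sequence in $[n]^n$ with $\sum_{j=1}^n a_j = n(n+1)/2$, and there is a trivial bijection between such sequences and vectors $(\wt{b}_1,\ldots,\wt{b}_n)$ of nonnegative integers satisfying $\sum_i \wt{b}_i = n$ and $\sum_i i\wt{b}_i = n(n+1)/2$. Hence $(\wt{b}_1,\ldots,\wt{b}_n)$ is \emph{uniform} over the finite set $\mc{S}$ of such vectors. In the balanced sequence model, the die is uniform on sequences in $[n]^n$ with $\sum a_j = n(n+1)/2$, and the number of sequences producing a given frequency vector $(p_1,\ldots,p_n)\in\mc{S}$ is the multinomial coefficient $\binom{n}{p_1,\ldots,p_n}=n!/\prod_i p_i!$, so $\mb{P}[(\wt{b}_1,\ldots,\wt{b}_n)=(p_1,\ldots,p_n)] \propto 1/\prod_i p_i!$ on $\mc{S}$.

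Next I would compute the conditioned product law. For iid $G_i\sim\mr{Geom}(1/2)$ and any $(g_1,\ldots,g_n)\in\mc{S}$,
\[\mb{P}[(G_1,\ldots,G_n)=(g_1,\ldots,g_n)] = \prod_{i=1}^n (1/2)^{g_i+1} = (1/2)^{n+\sum_i g_i} = (1/2)^{2n},\]
which is \emph{constant} over $\mc{S}$. Therefore conditioning on $\{\sum_i G_i=n,\;\sum_i iG_i = n(n+1)/2\}$ (i.e.\ on the event that $(G_1,\ldots,G_n)\in\mc{S}$) yields the uniform distribution on $\mc{S}$, matching the multiset model. Similarly for iid $P_i\sim\mr{Pois}(1)$ and any $(p_1,\ldots,p_n)\in\mc{S}$,
\[\mb{P}[(P_1,\ldots,P_n)=(p_1,\ldots,p_n)] = \prod_{i=1}^n \frac{e^{-1}}{p_i!} = \frac{e^{-n}}{\prod_i p_i!},\]
which is proportional to $1/\prod_i p_i!$ on $\mc{S}$, so conditioning on the two linear constraints gives a distribution proportional to the multinomial coefficient on $\mc{S}$, matching the balanced sequence model.

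There is really no obstacle here; the content is to notice that both linear functionals $\sum_i g_i$ and $\sum_i i g_i$ appear naturally in the log-likelihood of the underlying product law ($(1/2)^{g_i+1}$ contributes an affine function of $g_i$, while the Poisson $\log$-likelihood is affine in $p_i$ up to the $-\log p_i!$ that produces the multinomial weighting). The only thing to be careful about is that the conditioning events have positive probability for all large $n$, which is immediate since the constant die $(1,\ldots,n)\mapsto(1,1,\ldots,1)$ lies in $\mc{S}$. Putting these computations together yields the claimed distributional identities.
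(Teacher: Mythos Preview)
Your proof is correct. For the balanced sequence model it is essentially identical to the paper's argument. For the multiset model you take a more direct route than the paper: you observe that the bijection between nondecreasing sequences and frequency vectors makes $(\wt{b}_1,\ldots,\wt{b}_n)$ uniform on $\mc{S}$, and then compute that the product of $\mr{Geom}(1/2)$ pmfs equals $(1/2)^{n+\sum_i g_i}$, which is constant once $\sum_i g_i=n$ is fixed, so the conditioned law is also uniform on $\mc{S}$. The paper instead passes through a bijection with up-right lattice paths from $(1,1)$ to $(n+1,n)$ and identifies the $G_j$ as the lengths of the horizontal runs of an infinite simple random walk conditioned to pass through the appropriate points. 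Your computation is shorter and avoids the random-walk picture entirely; the paper's argument, while less economical for this lemma, makes the connection to simple random walk explicit, which is the heuristic underlying the later notion of a ``coarse'' coefficient sequence (\cref{def:coarse}) and the shape of the conditions \cref{S3,S5,S6}.
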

\begin{proof}
We consider the first case. Notice that the multiset model of a die can equivalently be sampled by sampling a uniformly random right-up walk between $(1,1)$ and $(n+1,n)$ and looking at the height of each rightward step, conditional on the area under the walk being $n(n+1)/2$. Indeed, there is a standard bijection between nondecreasing integer sequences $(b_1,\ldots,b_n)$ with $1\le b_j\le n$ and such walks: each $a_j$ corresponds to a rightward step from $(j,b_j)$ to $(j+1,b_j)$; furthermore, the area under the walk ends up being $b_1+\cdots+b_n$. Notice that drawing such a walk is equivalent to looking at an infinite random walk which takes steps in the directions $(1,0)$ or $(0,1)$ each with probability $1/2$ and then conditioning on starting at $(1,1)$ and passing through both $(n+1,n)$ and $(n+1,n+1)$, then truncating appropriately.

Now define $G_j$ as precisely the length of the horizontal segment on the line $y=j$ in this conditioned infinite random walk. In the unconditioned infinite walk, we have that the lengths (which might be $0$) of these horizontal segments in order have an independent distribution where the law is by definition a sequence of independent geometric random variables with parameter $1/2$. Notice that conditioning on the walk passing through the line segment $(n+1,n)$ and $(n+1,n+1)$ guarantees that $\sum_{j=1}^nG_j = n$ and the conditioning on area corresponds exactly to $\sum_{j=1}^njG_j = n(n+1)/2$. Considering the bijection defined above, these conditioned $G_j$ then correspond directly to the $\wt{b}_j$.

The second case is rather simpler. Notice that if one draws $n$ faces from $[n]$ uniformly at random then we have the proportionality
\[\mb{P}[(\wt{b}_1,\ldots,\wt{b}_n) = (b_1,\ldots,b_n)]\propto\prod_{j=1}^n\frac{1}{b_j!}\]
for tuples $(\wt{b}_j)_{1\le j\le n}\in\{0,\ldots,n\}^n$ with sum $n$. The result then follows since $\mb{P}[\mr{Pois}(1)=k]=\frac{e^{-1}}{k!}$ for $k\in \mb{Z}$, and since conditioning on the sum of the dice being $n(n+1)/2$ corresponds to conditioning on $\sum_{j=1}^njP_j=n(n+1)/2$.
\end{proof}

The precise reason this description is useful is that given a die $B$ one can define a linear function of the frequency count statistics of another die $A$ which captures precisely whether $A$ beats $B$ or not. An equivalent computation appears in the work of Polymath \cite[Section~4]{Pol22}; the formulation presented there however is more naturally a linear function of the ``die faces'' instead of the ``frequency count statistics''.
\begin{lemma}\label{lem:beats}
 We have that a die $A$ with sides $(a_1,\ldots,a_n)$ beats a die $B$ with sides $(b_1,\ldots,b_n)$ if and only if
 \[\sum_{j=1}^n\bigg(\sum_{1\le k<j}\wt{b}_k + \frac{\wt{b}_j}{2} - (j-1/2)\bigg)\wt{a}_j>0\]
 and ties if and only if the sum on the left is $0$.
\end{lemma}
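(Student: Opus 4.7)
The plan is a direct expansion of the definitions in \cref{def:beats} followed by rewriting $n^2/2$ in terms of the frequency counts of $A$. Concretely, I would first rewrite the double sum $\sum_{j,k}(\mbm{1}_{a_j>b_k}+\tfrac12\mbm{1}_{a_j=b_k})$ by grouping indices according to the values $a_j=i$ and $b_k=i'$, obtaining
\[
\sum_{j=1}^{n}\sum_{k=1}^{n}\bigl(\mbm{1}_{a_j>b_k}+\tfrac12\mbm{1}_{a_j=b_k}\bigr)=\sum_{i=1}^n\sum_{i'=1}^n\wt{a}_i\wt{b}_{i'}\bigl(\mbm{1}_{i>i'}+\tfrac12\mbm{1}_{i=i'}\bigr)=\sum_{j=1}^n\wt{a}_j\Bigl(\sum_{1\le k<j}\wt{b}_k+\tfrac{\wt{b}_j}{2}\Bigr).
\]
Thus ``$A$ beats $B$'' is the statement that this expression exceeds $n^2/2$, and ``$A$ ties $B$'' is equality.

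Next, I would use the defining constraints of an $n$-sided die (\cref{def:model}), namely $\sum_{j=1}^n\wt{a}_j=n$ and $\sum_{j=1}^n j\wt{a}_j=n(n+1)/2$, to note the identity
\[
\sum_{j=1}^n(j-1/2)\wt{a}_j=\frac{n(n+1)}{2}-\frac{n}{2}=\frac{n^2}{2}.
\]
Substituting this in for $n^2/2$ transforms the beats (resp.\ ties) inequality (resp.\ equality) into
\[
\sum_{j=1}^n\wt{a}_j\Bigl(\sum_{1\le k<j}\wt{b}_k+\tfrac{\wt{b}_j}{2}-(j-1/2)\Bigr)>0\quad(\text{resp. }=0),
\]
which is exactly the claimed formula.

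There is essentially no obstacle here: both the frequency-count rewriting and the identity $\sum(j-1/2)\wt{a}_j=n^2/2$ are one-line computations, and the only place care is needed is in verifying the half-tie term $\tfrac12\wt{b}_j$ comes from the $i=i'$ diagonal and matches the $\tfrac12$ convention in \cref{def:beats}. The whole argument should fit in a short paragraph.
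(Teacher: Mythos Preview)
Your proposal is correct and essentially identical to the paper's own proof: both rewrite the double sum via frequency counts to obtain $\sum_j\wt{a}_j(\sum_{k<j}\wt{b}_k+\wt{b}_j/2)$, then subtract the identity $\sum_j(j-1/2)\wt{a}_j=n^2/2$ coming from the die constraints.
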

\begin{proof}
Notice that 
\begin{align*}
\sum_{i=1}^n\sum_{j=1}^n\bigg(\mbm{1}_{a_i>b_j} + \frac{1}{2}\mbm{1}_{a_i = b_j}\bigg) &= \sum_{j=1}^n\wt{a}_j\sum_{1\le k<j}\wt{b}_j + \sum_{j=1}^n \frac{\wt{a}_j\wt{b}_j}{2}\\
&= \sum_{j=1}^n\wt{a}_j\bigg(\sum_{1\le k<j}\wt{b}_j + \frac{\wt{b}_j}{2}\bigg).
\end{align*}
Since $A$ is an $n$-sided die with sum of faces $n(n+1)/2$ we have 
\[\sum_{j=1}^n\wt{a}_j(j-1/2) = \sum_{j=1}^na_j - \frac{n}{2} = \frac{n^2}{2}\]
and therefore the event that $A$ beats $B$ is precisely equivalent to 
\[\sum_{j=1}^n\wt{a}_j\bigg(\sum_{1\le k<j}\wt{b}_j + \frac{\wt{b}_j}{2} - (j-1/2)\bigg) > 0\]
whereas $A$ and $B$ being tied corresponds to the left side being equal to $0$. 
\end{proof}

We cast this condition in an equivalent form which will be useful for computations involving Gaussians.
\begin{definition}\label{def:dice-matrix}
Let $I_n$ be the $n\times n$ identity matrix. Let $\vec{v}_1,\vec{v}_2\in\mb{R}^n$ be defined by $v_{1i}=1/\sqrt{n}$ for $1\le i\le n$ and $v_{2i}=(i-(n+1)/2)/\sqrt{n(n^2-1)/12}$ for $1\le i\le n$. Note these are orthogonal unit vectors. Let $M_n\in\mb{R}^{n\times n}$ be defined via $(M_n)_{ij}=\mbm{1}_{i<j}+(\mbm{1}_{i=j}/2)$ and $M_n^\ast\in\mb{R}^{n\times n}$ via
\[M_n^\ast=(I_n-\vec{v}_2\vec{v}_2^T)(I_n-\vec{v}_1\vec{v}_1^T)M_n(I_n-\vec{v}_1\vec{v}_1^T)(I_n-\vec{v}_2\vec{v}_2^T)=(I_n-\vec{v}_1\vec{v}_1^T-\vec{v}_2\vec{v}_2^T)M_n(I_n-\vec{v}_1\vec{v}_1^T-\vec{v}_2\vec{v}_2^T).\]
Equivalently, we re-express the (asymmetric) bilinear form $M_n$ in a basis including $\vec{v}_1,\vec{v}_2$ on both sides, zero out the rows and columns corresponding to $\vec{v}_1,\vec{v}_2$, and then convert back. Finally, define $\sigma_{n,1}\ge\cdots\ge\sigma_{n,\lfloor n/2\rfloor}$ be such that $\{\pm i\sigma_{n,\ell}\colon\ell\in[\lfloor n/2\rfloor]\}$ is the spectrum (or the spectrum minus a copy of $0$ if $n$ is odd).
\end{definition}

The following lemma introduces this discrete variant of the kernel which appears in the title of the paper.
\begin{lemma}\label{lem:beats-matrix}
Given dice $A,B$ with frequency vectors $\vec{a},\vec{b}\in\{0,\ldots,n\}^n$, we have that $A$ beats $B$ if and only if
\[\wt{b}^TM_n^\ast\wt{a}>0.\]
\end{lemma}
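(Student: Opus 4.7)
The plan is to directly massage the criterion given in \cref{lem:beats} into the matrix form. Rewriting the left-hand side of the inequality in \cref{lem:beats} as
\[
\wt{b}^TM_n\wt{a} - \sum_{j=1}^n(j-1/2)\wt{a}_j,
\]
and using the dice constraints $\sum_j \wt{a}_j=n$ and $\sum_j j\wt{a}_j = n(n+1)/2$ to evaluate $\sum_j(j-1/2)\wt{a}_j = n^2/2$, I see that $A$ beats $B$ if and only if $\wt{b}^TM_n\wt{a} > n^2/2$. So the task reduces to showing $\wt{b}^TM_n^\ast\wt{a} = \wt{b}^TM_n\wt{a} - n^2/2$.

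The key observation is that because $\wt{a}$ (and similarly $\wt{b}$) come from a genuine die, one has $\vec{v}_1^T\wt{a} = \sqrt{n}$ (from $\sum_j \wt{a}_j = n$) and $\vec{v}_2^T\wt{a} = 0$ (from $\sum_j j\wt{a}_j = n(n+1)/2$), since $\vec{v}_2$ is proportional to the centered vector $(i - (n+1)/2)_{i}$. Consequently,
\[
(I_n-\vec{v}_1\vec{v}_1^T-\vec{v}_2\vec{v}_2^T)\wt{a} = \wt{a} - \vec{1},
\]
and likewise for $\wt{b}$, where $\vec{1}$ is the all-ones vector. Thus
\[
\wt{b}^T M_n^\ast\wt{a} = (\wt{b}-\vec{1})^T M_n(\wt{a}-\vec{1}).
\]

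It then suffices to expand and compute the three correction terms. From the explicit form $(M_n)_{ij} = \mbm{1}_{i<j}+\frac{1}{2}\mbm{1}_{i=j}$, the vector $M_n\vec{1}$ has $i$-th entry $(n-i+1/2)$, and $\vec{1}^T M_n$ has $j$-th entry $(j-1/2)$. Using the dice constraints once more, each of $\wt{b}^TM_n\vec{1}$, $\vec{1}^TM_n\wt{a}$, and $\vec{1}^TM_n\vec{1}$ evaluates to exactly $n^2/2$, so the expansion collapses to $\wt{b}^TM_n\wt{a} - n^2/2$, as needed. There is really no main obstacle here: the only thing to watch is that the two vectors $\vec{v}_1,\vec{v}_2$ span precisely the affine constraints defining a die, so the symmetric projection in \cref{def:dice-matrix} ``absorbs'' exactly the two linear statistics of $\wt{a},\wt{b}$ that are forced by the die conditions, leaving the correct strict inequality.
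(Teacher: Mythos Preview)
Your proof is correct and follows essentially the same approach as the paper's. The paper's argument is simply a terser version: it observes that \cref{lem:beats} rewrites as $(\wt{b}-\vec{1})^TM_n(\wt{a}-\vec{1})>0$ and then notes $\wt{a}-\vec{1},\wt{b}-\vec{1}$ are orthogonal to $\vec{v}_1,\vec{v}_2$, which is exactly the computation you spelled out in full.
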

\begin{proof}
This is immediate since a simple manipulation of \cref{lem:beats} shows the condition that $A$ beats $B$ is equivalent to $(\vec{b}-\sqrt{n}\vec{v}_1)^TM_n(\vec{a}-\sqrt{n}\vec{v}_1)>0$, and since $\vec{v}_1^T(\wt{a}-\sqrt{n}\vec{v}_1) = \vec{v}_2^T(\wt{a}-\sqrt{n}\vec{v}_1)=\vec{v}_1^T(\wt{b}-\sqrt{n}\vec{v}_1) = \vec{v}_2^T(\wt{b}-\sqrt{n}\vec{v}_1) = 0$.
\end{proof}

Finally, we record some properties of $M_n^\ast$ as well as $\mc{A}$ (\cref{def:operator}). We are brief with the details as it mostly amounts calculation with explicit functions and operators.
\begin{lemma}\label{lem:operator-decay}
There exists $C = C_{\ref{lem:operator-decay}}>0$ such that the following holds. Let $M_n^\ast$ be as in \cref{def:dice-matrix}, $x = (n + 1 - 2i)/(n-1)$ and $y = (n + 1-2j)/(n-1)$. Then we have the following:
\begin{enumerate}[{\bfseries{M\arabic{enumi}}}]
    \item\label{M1} \[(M_n^\ast)_{ij} = \frac{\mbm{1}_{x\ge y}-\mbm{1}_{x\le y}}{2} - \frac{3(x-y)(1-1/n)}{4} - \frac{3xy(x-y)(n-1)^2}{4n(n+1)}.\]
    \item\label{M2} $M_n^\ast$ is skew-symmetric.
    \item\label{M3} $\snorm{M_n^\ast}_{1\to\infty}\le C_{\ref{lem:operator-decay}}$ (i.e., the entries are of bounded size).
    \item\label{M4} $\snorm{M_n^\ast}_{1\to2}=\snorm{M_n^{\ast T}}_{1\to 2}\le C_{\ref{lem:operator-decay}}\sqrt{n}$ (i.e., the row and column $L^2$-norms are $O(\sqrt{n})$ in size).
    \item\label{M5} $\snorm{M_n^\ast}_F/n\in [C_{\ref{lem:operator-decay}}^{-1},C_{\ref{lem:operator-decay}}]$\footnote{Here the \emph{Frobenius norm} of matrix $M\in\mb{R}^{n\times n}$ is $\snorm{M}_F:=\sqrt{\sum_{1\le j,k\le n}M_{jk}^2}$}.
    \item\label{M6} For fixed $t\ge 1$ and $n$ sufficiently large,
    \[\sum_{\ell\ge t}\sigma_{n,\ell}^2\le C_{\ref{lem:operator-decay}}n^2/t.\]
    \item\label{M7} For $1\le i,j,k\le n$ we have $|(M_n^\ast)_{ji}-(M_n^\ast)_{ki}|\le C_{\ref{lem:operator-decay}}|j-k|/n$ for all $i\notin [j,k]\cup[k,j]$.
\end{enumerate}
We also have the following properties of $\mc{A}$.
\begin{enumerate}[{\bfseries{M\arabic{enumi}}}]
    \setcounter{enumi}{7}
    \item\label{M8} For all $t\ge 1$ we have that $t\sigma_t\in [C_{\ref{lem:operator-decay}}^{-1},C_{\ref{lem:operator-decay}}]$ (i.e., $t\sigma_t$ is bounded above and below by an absolute constant).
    \item\label{M9} $(\sigma_{n,\ell}/n)_{1\le\ell\le t}\to(\sigma_\ell)_{1\le\ell\le t}$ as $n\to\infty$.
\end{enumerate}
\end{lemma}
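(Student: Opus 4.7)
My plan is to first establish the explicit formula M1 by direct calculation, then read off the structural bounds M2--M5 and M7, and finally derive the spectral decay estimates M6, M8, M9 using a combination of low-rank approximation of $M_n^\ast$ and ODE analysis of the continuous operator $\mathcal{A}$.

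For M1, I would expand $M_n^\ast=(I-\vec{v}_1\vec{v}_1^T-\vec{v}_2\vec{v}_2^T)M_n(I-\vec{v}_1\vec{v}_1^T-\vec{v}_2\vec{v}_2^T)$ by computing the auxiliary vectors $M_n\vec{v}_a$, $\vec{v}_a^TM_n$, and scalars $\vec{v}_a^TM_n\vec{v}_b$ for $a,b\in\{1,2\}$ via elementary summation identities; substituting $x,y\in[-1,1]$ and simplifying yields the stated formula. The skew-symmetry M2 follows because $M_n+M_n^T=J=n\vec{v}_1\vec{v}_1^T$ is annihilated on both sides by $I-\vec{v}_1\vec{v}_1^T-\vec{v}_2\vec{v}_2^T$. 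The bound M3 is immediate from M1 since $|x|,|y|\le1$, and M4 then follows because a length-$n$ vector of $O(1)$ entries has $L^2$-norm $O(\sqrt{n})$. For M5, the sum $\sum_{i,j}(M_n^\ast)_{ij}^2$ is, up to a negligible diagonal contribution, approximately $4\sum_{i,j}f(x_i,x_j)^2$, which as a Riemann sum converges to $n^2\int_{-1}^1\int_{-1}^1f(x,y)^2\,dx\,dy$, a positive constant that I would verify explicitly. For M7, the only non-smooth term in M1 is $(\mathbbm{1}_{x\ge y}-\mathbbm{1}_{x\le y})/2$, and when $i\notin[j,k]\cup[k,j]$ this term takes identical values in $(M_n^\ast)_{ji}$ and $(M_n^\ast)_{ki}$, so these cancel and the remaining polynomial terms differ by $O(|x_j-x_k|)=O(|j-k|/n)$.

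Then M6 follows from M3 and M7 via block-constant approximation: partition $[n]$ into $O(t)$ contiguous blocks of size $O(n/t)$ and define $B$ by replacing each block of $M_n^\ast$ with a single representative value; then $B$ has rank at most $t$. Off-diagonal blocks contribute $O(1/t^2)$ entrywise squared error by M7, totaling $O(n^2/t^2)$ in $\|M_n^\ast-B\|_F^2$, while the $O(t)$ diagonal blocks contain $O((n/t)^2)$ entries of size $O(1)$ by M3, contributing $O(n^2/t)$. Since $\sum_{\ell>\mathrm{rank}(B)}\sigma_{n,\ell}^2\le\|M_n^\ast-B\|_F^2$ by the Eckart--Young theorem, M6 follows.

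For M8, I decompose $\mathcal{A}=\mathcal{K}+\mathcal{L}$ where $\mathcal{K}$ has kernel $\mathrm{sign}(x-y)/4$ and $\mathcal{L}$ has the polynomial kernel $-3(x-y)(1+xy)/8$, which is finite-rank (of rank at most $3$). For $\mathcal{K}$, a direct integration shows that $\mathcal{K}^2$ has kernel $-(1-|x-z|)/8$, so the eigenvalue equation $-\mathcal{K}^2u=\sigma^2u$ reduces by two differentiations to a constant-coefficient second-order linear ODE together with the conditions $u(-1)+u(1)=0$ and $u'(-1)+u'(1)=0$ obtained by evaluating the integral equation (and its first derivative) at the endpoints; the eigenvalues are $\sigma_k(\mathcal{K})=1/((2k-1)\pi)$. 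Weyl's inequality for singular values then transfers this to $\mathcal{A}$ via $\sigma_{i+r}(\mathcal{K})\le\sigma_i(\mathcal{A})\le\sigma_{i-r}(\mathcal{K})$ with $r\le3$, giving $t\sigma_t\in[C^{-1},C]$. For M9, M1 shows $(M_n^\ast)_{ij}=2f(x_i,x_j)+O(1/n)$ away from the diagonal, so the step-function operator associated to $M_n^\ast/n$ on $L^2([-1,1])$ converges to $\mathcal{A}$ in Hilbert--Schmidt norm, and the top $t$ singular values converge by standard perturbation theory. The main obstacle I anticipate is the ODE analysis in M8: extracting the correct boundary conditions at $x=\pm1$ requires evaluating the integral equation at the endpoints, since simply differentiating twice would lose this information.
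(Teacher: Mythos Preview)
Your proposal is correct and largely parallels the paper's proof, but two parts take genuinely different routes.

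For M6, the paper builds a rank-$(t+4)$ approximation by working with $M_n$ itself: it removes dyadic squares of $1$'s from the upper-triangular part of $M_n$ to obtain a rank-$t$ approximation with Frobenius error $O(n/\sqrt{t})$, and then pushes this through the projection $(I-\vec{v}_1\vec{v}_1^T-\vec{v}_2\vec{v}_2^T)(\cdot)(I-\vec{v}_1\vec{v}_1^T-\vec{v}_2\vec{v}_2^T)$. Your block-constant approximation applied directly to $M_n^\ast$, using M7 for off-diagonal blocks and M3 for diagonal blocks, is more self-contained and avoids tracking how the projection affects the rank and error.

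For M8, the paper does not do the ODE analysis you outline. Instead it passes to the discrete analogue $(T_n)_{ij}=\mathbbm{1}_{i\ge j}-\mathbbm{1}_{i\le j}$, proves by row reduction and induction that its characteristic polynomial is $\frac{(-1)^n((\lambda+1)^n+(\lambda-1)^n)}{2}$, reads off that the $j$th eigenvalue scales like $\Theta(n/j)$, and then rescales and sends $n\to\infty$ to obtain $\sigma_t(\mathcal{K})=\Theta(1/t)$. Your route---computing the kernel of $\mathcal{K}^2$ as $-(1-|x-z|)/8$, differentiating twice to get $u''+u/(4\sigma^2)=0$, and extracting the boundary conditions $u(1)+u(-1)=u'(1)+u'(-1)=0$ by evaluating the integral equation at the endpoints---gives the explicit values $\sigma_k(\mathcal{K})=1/((2k-1)\pi)$, which is strictly more information than the paper obtains. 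Both finish via Weyl's inequality for the finite-rank perturbation $\mathcal{L}$. The paper's approach is a short algebraic trick; yours is the classical Sturm--Liouville computation and has the advantage of yielding closed-form values.

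Your M9 argument (step-function embedding and Hilbert--Schmidt convergence) is essentially identical to the paper's.
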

\begin{proof}
Via a direct, albeit tedious computation, one has that if $x = (n + 1 - 2i)/(n-1)$ and $y = (n + 1-2j)/(n-1)$ then
\[(M_n^\ast)_{ij} = \frac{\mbm{1}_{x\ge y}-\mbm{1}_{x\le y}}{2} - \frac{3(x-y)(1-1/n)}{4} - \frac{3xy(x-y)(n-1)^2}{4n(n+1)}.\]
The properties \cref{M2,M3,M4,M5,M7} all follow immediately via direct inspection.

To prove \cref{M6}, i.e. that $\sum_{\ell\ge t}\sigma_{n,\ell}^2$, it suffices to show that there is a rank $t+4$ (say) approximation of $M_n^\ast$, call it $R_t$, such that $\snorm{M_n^\ast-R_t}_F^2\lesssim n^2/t$. This follows from considering a rank $t$ approximation for $M_n$ and then plugging it into \cref{def:dice-matrix}. An appropriate rank $t$ approximation for $M_n$ with square-error $O(n^2/t)$ can be formed by removing square matrices of $1$s from the right isosceles triangle above the main diagonal of $M_n$ in a dyadic fashion.

To prove the convergence given in implied in \cref{M8}, we proceed by an argument identifying matrices with operators in $L^2([-1,1])$ via step functions. In particular, consider the matrices $M_n^\ast$ and identify them with the kernels
\[M_n^{(\ast)}(x,y) = \frac{nM_n^\ast(\lceil n(1-x)/2\rceil,\lceil n(1-y)/2\rceil)}{2}\]
and note that the action of $M_n^\ast$ on $\mb{R}^n$ corresponds exactly to the action of kernel $M_n^{(\ast)}(x,y)$ on step functions where the index $i\in[n]$ has been mapped to the interval $[1-2i/n,1-2(i-1)/n)$. These have the same spectrum: the multiplicative factor of $n/2$ corresponds to fact that the step function which is $1$ on a single length $2/n$ interval has norm $(2/n)^{1/2}$ in the continuous formulation while it has norm $1$ when viewed as a vector in $\mb{R}^n$.

In general, given a kernel $K\colon[-1,1]^2\to\mb{R}$ one can define the integral operator 
\[\wt{K}\colon g(x)\to\int_{-1}^1K(x,y)g(y)dy\]
and we have $\snorm{\wt{K}}_{L^2([-1,1])\to L^2([-1,1])}\le\snorm{K}_{L^2([-1,1]^2)}$ by Cauchy--Schwarz (see e.g.~\cite[Example~9.23]{HN01}). Via this identification, we have the strong convergence $M_n^{(\ast)}(x,y)/n\to\wt{M^{(\ast)}}:=\mc{A}$ where the corresponding kernel is
\[f(x,y)=\frac{\mbm{1}_{x\ge y}-\mbm{1}_{x\le y}}{4} - \frac{3(x-y)(1+xy)}{8}.\]
Given this, since $\mc{A},M_n^{(\ast)},M_n^\ast$ are skew-symmetric (hence normal) operators, it is easy to see that the normalized eigenvalues of $M_n^\ast$ converge to those specified by \cref{def:operator} (as strong convergence implies convergence of the spectrum). This proves \cref{M8}.

Finally we prove \cref{M7}. In order to prove \cref{M7}, we first note that $\mc{A}$ is an $O(1)$-rank skew-symmetric perturbation of the integral operator associated to the function $g(x,y) = \frac{\mbm{1}_{x\ge y}-\mbm{1}_{x\le y}}{4}$. We claim that it suffices to prove that the $t$th singular value of $\wt{g}$ scales as $\Theta(1/t)$. Indeed, apply the generalized Weyl's inequality to the Hermitian operator $\wt{g}^\dagger\wt{g}$ using that $\mc{A}^\dagger\mc{A}$ is a bounded rank perturbation.

To compute the spectrum of $\wt{g}$ (and thus that of $\wt{g}^\dagger\wt{g}$), note that the matrix given by $(T_n)_{ij} = \mbm{1}_{i\ge j} - \mbm{1}_{i\le j}$ has characteristic polynomial $\frac{(-1)^n((\lambda+1)^n+(\lambda-1)^n)}{2}$; this is easily proven via row operations and induction. It follows that the eigenvalues of $T_n$ are $(1+\exp(\pi i (2j-1)/n))/(1-\exp(\pi i (2j-1)/n))$ for $1\le j\le n$. Thus the $j$th largest eigenvalue in magnitude scales as $\Theta(n/j)$. The desired result then follows by rescaling and taking $n\to\infty$.
\end{proof}

\section{Outline of the remainder of the proof}\label{sec:sketch}
We now outline the remainder of the proofs of \cref{thm:main,thm:ties} in the multiset model; the balanced sequence model is very similar modulo adjusting various constant factors arising due to $\mr{Var}[\mr{Geom}(1/2)] = 2\mr{Var}[\mr{Pois}(1)]=2$. We also discuss the various deductions which follow from \cref{thm:main}. Consider a set of $m$ dice $A_1,\ldots,A_m$ and let $\wt{a}_k=(\wt{a}_{kj})_{1\le j\le n}$ be the $n$-dimensional vector corresponding to the frequency counts of $A_k$ for $1\le k\le m$.

\subsection{\texorpdfstring{\cref{thm:main}}{Theorem 1.4} and its consequences}\label{sub:main-proof}
By \cref{lem:beats-matrix} we have that $A_j$ beats $A_k$ if an only if $\wt{x}_k^TM_n^\ast\wt{x}_j>0$. Note that the constraints that $\wt{x}_j$ satisfy are precisely $(1,\ldots,1)^T\wt{x}_j = n$ and $(1,2,\ldots,n)^T\wt{x}_j = n(n+1)/2$ (equivalently, $\wt{x}_j-1$ is orthogonal to $\vec{v}_1,\vec{v}_2$). By construction we have that $M_n^\ast\vec{1} = \vec{0}$ and $M_n^\ast(1,2\ldots,n) = \vec{0}$. Therefore for the sake of reasoning heuristically, we can pretend that the conditioning in \cref{lem:crucial} does not affect the probability distribution of $\wt{x}_k^TM_n^\ast\wt{x}_j$ and instead suppose that $\wt{x}_\ell$ are replaced by $X_\ell$, $n$-dimensional vectors where every entry is taken independently at random to be $\mr{Geom}(1/2)$. Now $X_k^TM_n^\ast X_j$ is a bilinear polynomial of independent random variables. Tools such as the invariance principle of Mossel, O'Donnell, and Oleszkiewicz \cite{MOO10} imply that the associated distribution is close to the distribution in the case where $X_\ell$ are replaced by $Z_\ell$ where each entry of $Z_\ell$ is an independent normal of variance $\mr{Var}[\mr{Geom}(1/2)]=2$. Given this, we can convert to a Gaussian quadratic form. This is invariant under orthogonal transformation, so a singular value decomposition for the skew-symmetric matrix $M_n^\ast$ and an appropriate variant of the spectral theorem quickly leads to the distribution in \cref{thm:main}. In particular, the coefficients associated in \cref{thm:main} arise precisely from an application of \cref{lem:operator-decay}.

In order to prove this heuristic, we need to be precisely understand the joint distribution of $(1,\ldots,1)^TX_j$, $(1,2,\ldots,n)^TX_j$, and the desired quadratic forms. We proceed using Fourier transform (characteristic function) and computing the multidimensional Fourier coefficients of the joint distribution of the quadratic forms conditional on these linear equalities. This conditional expectation can be recast using Bayes' theorem and converted to an expression involving joint coefficients involving both quadratic and linear forms, which we can provide control for using the techniques in \cref{sec:fourier}. Our proof here is closely related to that of that in the work of Polymath \cite{Pol22} which similarly used local central limit theorem techniques to decouple various linear conditions; however the implementation is performed in a rather different manner.

We write this more explicitly. For the sake of this discussion, let $\mc{E}$ denote the event that all that the $m$ samples $X_\ell$ for $1\le\ell\le m$ satisfy $(1,\ldots,1)^TX_j = n,(1,2,\ldots,n)^TX_j = n(n+1)/2$. We then must compute
\[\mb{E}\bigg[\exp\bigg(i\sum_{1\le j<k\le m}\theta_{jk}X_k^TM_n^\ast X_j\bigg)\bigg|\mc{E}\bigg]\]
for all choices of $\theta=(\theta_{jk})_{1\le j<k\le m}$ where $\snorm{\theta}_\infty$ is roughly $\wt{O}(1/n)$.

Via applying Bayes' rule, this amounts to computing 
\[\mb{E}\bigg[\exp\bigg(i\sum_{1\le j<k\le m}\theta_{jk}X_k^TM_n^\ast X_j\bigg)\mbm{1}_{\mc{E}}\bigg],\]
since then considering $\theta=0$ gives an estimate for $\mb{E}[\mbm{1}_{\mc{E}}]=\mb{P}[\mc{E}]$ and we can divide to obtain the conditional expectation. At this juncture, much as in the work of Polymath \cite{Pol22}, we rely on the Fourier inversion formula to convert the indicator $\mbm{1}_{\mc{E}}$ into a explicit integral formula in terms of additional Fourier terms involving the above linear forms. (Note that this conversion is only available to us in the multiset model due to the key lemma \cref{lem:crucial}, and even in the balanced sequence model we utilize the setup of \cref{lem:crucial} to prove \cref{thm:main}.)

In particular, by applying Fourier inversion on the lattices we will find 
\begin{align*}
&\mb{E}\bigg[\exp\bigg(i\sum_{1\le j<k\le m}\theta_{jk}X_k^TM_n^\ast X_j\bigg)\mbm{1}_{\mc{E}}\bigg]\\
&=(2\pi)^{-2m}\int_{[-\pi,\pi]^{2m}}\mb{E}\bigg[\exp\bigg(i\sum_{1\le j<k\le m}\Theta_{k,j}X_k^TM_n^\ast X_j\bigg)\\
&\qquad\qquad\qquad\qquad\qquad\cdot\exp\bigg(i\bigg(\sum_{r=1}^m\xi_{1r}\bigg(\sum_{j=1}^n(X_{rj}-1)\bigg) + \xi_{2r}\bigg(\sum_{j=1}^nj(x_{rj}-1)\bigg)\bigg)\bigg)\bigg]d\vec{\xi}.
\end{align*}
In order to prove the desired result, we split the integral into several regions. If any $|\xi_{1r}|\ge n^{-1/2}(\log n)^7$ or $|\xi_{2r}|\ge n^{-3/2}(\log n)^6$, we prove that the corresponding term in the integral is super-polynomially small using \cref{lem:theta2-high,lem:theta2-mid,lem:theta1-region}. Specifically, we conditions on everything outside of the index $r$, and then the corresponding Fourier integral is simply a product of independent terms handled by these lemmas. To prove these lemmas, we extract cancellation in a systematic and clean manner by considering pairs and triplets of indices and performing ``switches'' between then in order to extract Boolean randomness. These switches allow for one to provide sufficient conditions on various coefficient sequences to be good enough to perform these arguments, and said conditions exist purely in ``physical space'' (whereas the approach taken in the work of Polymath \cite{Pol22} naturally leads one to consider how various coefficients are distributed with respect to angles on the torus). Finally, in the region where $|\xi_{1r}|\le n^{-1/2}(\log n)^{7}$ and $|\xi_{2r}|\le n^{-3/2}(\log n)^{6}$ we apply a Lindeberg exchange argument (see \cite{Lin22}, and also the related proof of the invariance principle \cite{MOO10}) to replace the geometric random variables with Gaussians of the same variance. Using the rapid decay of Fourier coefficients the Gaussian and Gaussian rotational symmetry one can verify the Fourier coefficient matches that of the associated Gaussian prediction and thus the desired result follows via L\'evy continuity and similar techniques which convert Fourier control back to physical space control.

In order to prove \cref{cor:symmetry}, we directly cite \cref{thm:main} and uses symmetries of the Gaussian distribution under negation to derive the necessary result. For \cref{cor:convergence}, note that convergence to a tournamenton follows from general machinery since we have the convergence of each digraph. To deduce that the associated tournamenton is $\{0,1\}$-valued we reduce to proving a random tournament on $M$ dice takes on outcomes within a specific set of complete tournaments of size $2^{\eps M^2}$ with probability at least $1-\eps$. This is shown using \cref{thm:main}: note that we can simulate the limiting tournament on $M$ vertices by sampling the Gaussians $G_\ell^{(j)}$ for $1\le j\le M$ and $\ell\ge 1$ and computing the various $H_{jk}$ and checking their signs. By revealing for each $j$ the first $2M^{1/2}$ Gaussians $Z_\ell^{(j)}$ within a rounding error of $M^{-10}$, this provides at most $\exp(O(M^{3/2}\log M))$ buckets where almost all the probability mass lies and also allows us with good probability to determine the outcome of almost all match-ups in the tournament (this deduction requires Gaussian anticoncentration results such as \cref{thm:gauss-anti} in order to see that it is unlikely that many match-ups are ``too close to call'' due to the rounding error). Then revealing the outcomes of the remaining match-ups introduces $\exp(o(M^2))$ total buckets that contain almost all the probability mass, and which uniquely determine the outcome of the $M$-die tournament.

Given the non-quasirandomness of the associated tournament from \cref{cor:convergence} and the underlying symmetries in \cref{cor:symmetry} it also follows from a simple Cauchy--Schwarz argument that the limiting probability $A$ beats $B$, $B$ beats $C$, $C$ beats $D$, and $D$ beats $A$ is strictly larger than $1/16$ (see \cref{prop:4-cycle}). Finally, we note that via carefully choosing various Gaussians $Z_\ell^{(j)}$ to lie in certain ranges one can prove that the limiting probability of any fixed $M$-die tournament occurring is strictly positive (see \cref{prop:construct}). This allows one to quickly deduce a number of prior results as discussed in the introduction.

\subsection{Proof of \texorpdfstring{\cref{thm:ties}}{Theorem 1.8}}
To compute the probability two dice tie, proceed via a more delicate route. As discussed in the remark following \cref{thm:ties}, one can see this as a (special case of a) local limit theorem version of \cref{thm:main} with two dice.

We use ideas closely related to those in the proof of \cref{thm:main}, as well as additional Fourier coefficient estimates (\cref{lem:theta3-high,lem:theta3-mid}) which use the extra condition that certain associated coefficient sequences ``resemble a simple random walk at all scales'' in a coarse sense. It follows that for almost all outcomes of die $A_1$, the probability a random die $A_2$ with frequency counts $\wt{x}_2$ ties $A_1$ is proportional to $\snorm{M_n^\ast\vec{x}_2}_2^{-1}$. (We note that such a result for the balanced sequence model is essentially implicit in the work of Polymath \cite{Pol22} although not stated in such a manner; however, again, our work proceeds through frequency counts instead of using independent die faces which are not available for the multiset model.)

Therefore the natural approach at this point would be to prove a limit theorem for $\snorm{M_n^{\ast}X}_2^2$, where $X$ is a sequence of geometric random variables conditional on the two linear constraints $(1,\ldots,1)^TX=n,(1,2,\ldots,n)^TX=n(n+1)/2$. While this appears to be possible note that $\snorm{M_n^\ast\vec{x}}_2^2$ is a genuinely quadratic polynomial in the underlying random variables (instead of being multilinear in the case of \cref{thm:main}) and hence for a direct approach various tools developed by Berkowitz \cite{Ber16}, developed in the context of local central limit theorems for clique counts in dense random graphs, would appear to be necessary, which would greatly complicate the situation.

To circumvent this, we proceed indirectly so as to only require linear Fourier estimates. The basic idea is that given a sufficiently good upper bound on  $\snorm{M_n^\ast X}_3$ (conditional on our two linear constraints), by sampling a fixed number of random coordinates $j_1,\ldots,j_T$ for some large constant $T$ we have
\[\snorm{M_n^\ast X}_2^2 \approx \frac{n}{T}\sum_{\ell=1}^T\sang{e_{j_\ell},M_n^\ast X}^2\]
holds with high probability as $T\to \infty$. Therefore the question can be reduced to a question of understanding the linear statistics $(\sang{e_{j_\ell},M_n^{\ast}X})_{1\le \ell\le T}$ jointly conditional on our two linear constraints. This can be handled by precisely the techniques developed we discussed in \cref{sub:main-proof} for \cref{thm:main}. The estimates are necessarily a bit delicate since the function $y\mapsto 1/y$ is not bounded near $0$ and thus care must be taken to rule out the pathology that $\snorm{M_n^\ast X}_2$ is small with unusually large probability.

\section{Preliminaries}\label{sec:prelim}
We briefly collect a series of preliminaries which will be used throughout the proof. First we will require a version of the classical Bernstein inequality, which generalizes Chernoff.
\begin{theorem}[{\cite[Theorem~2.8.1]{Ver18}}]\label{thm:bernstein}
For a random variable $X$ define the $\psi_1$-norm
\[\snorm{X}_{\psi_1}=\inf\{t>0\colon\mb{E}[\exp(|X|/t)]\le 2\}.\]
There is an absolute constant $c = c_{\ref{thm:bernstein}}> 0$ such that the following holds. If $X_1,\ldots,X_N$ are independent random variables then 
\[\mb{P}\bigg[\bigg|\sum_{i=1}^NX_i\bigg|\ge t\bigg]\le 2\exp\bigg(-c\min\bigg(\frac{t^2}{\sum_{i=1}^N\snorm{X_i}_{\psi_1}^2},\frac{t}{\max_i\snorm{X_i}_{\psi_1}}\bigg)\bigg)\]
for all $t\ge 0$.
\end{theorem}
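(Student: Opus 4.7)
The plan is to prove this via a standard Chernoff-type argument, reducing everything to a sub-exponential moment generating function (MGF) bound. Although the hypotheses do not explicitly demand centering, the statement is vacuous without absorbing $\sum \mb{E}[X_i]$ into the $t$, so I will assume without loss of generality that each $X_i$ is centered (which inflates $\snorm{X_i}_{\psi_1}$ by at most a factor of $2$ by the triangle inequality for the $\psi_1$-norm).

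First I would establish the key local MGF bound: for any centered random variable $X$ with $\snorm{X}_{\psi_1}=K$, there are absolute constants $c_0,C_0>0$ such that
\[
\mb{E}[\exp(\lambda X)] \le \exp(C_0\lambda^2 K^2) \quad\text{whenever } |\lambda| \le c_0/K.
\]
The strategy is a Taylor expansion $e^{\lambda X} = 1 + \lambda X + \sum_{k\ge 2}(\lambda X)^k/k!$, together with moment bounds $\mb{E}[|X|^k] \le k!\,K^k$ obtained directly from the definition of $\snorm{\cdot}_{\psi_1}$ (expanding $\mb{E}[\exp(|X|/K)] \le 2$ in Taylor series yields $\mb{E}[|X|^k]/(k!\,K^k) \le 2$, and in fact the bound on $\mb{E}|X|^k$ following from $\mb{E}[\exp(|X|/K)]\le 2$ can be sharpened to $c\,k!\,K^k$). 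Taking expectations (the $\lambda X$ term vanishes by centering), one gets $\mb{E}[\exp(\lambda X)] \le 1 + \sum_{k\ge 2}|\lambda|^k K^k \le 1 + 2\lambda^2 K^2$ in the regime $|\lambda K|\le 1/2$, and then $1+x \le e^x$ closes the bound.

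Next, the Chernoff step: for $\lambda>0$ small enough that $|\lambda|\le c_0/\max_i K_i$ (writing $K_i = \snorm{X_i}_{\psi_1}$), independence and the MGF bound give
\[
\mb{P}\!\left[\sum_{i=1}^N X_i \ge t\right] \le e^{-\lambda t}\prod_{i=1}^N \mb{E}[\exp(\lambda X_i)] \le \exp\!\left(-\lambda t + C_0\lambda^2 \sum_{i=1}^N K_i^2\right).
\]
Now optimize over $\lambda$. The unconstrained optimum is $\lambda^\ast = t/(2C_0\sum_i K_i^2)$, giving the Gaussian-tail bound $\exp(-t^2/(4C_0\sum_i K_i^2))$; this is legal precisely when $\lambda^\ast \le c_0/\max_i K_i$, i.e.\ in the regime $t \lesssim \sum_i K_i^2/\max_i K_i$. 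Otherwise one sets $\lambda = c_0/\max_i K_i$, which yields the exponential-tail bound $\exp(-c_1 t/\max_i K_i)$ (absorbing the $\lambda^2 \sum K_i^2$ term into the $-\lambda t$ term since in this regime $t$ dominates). Taking the worse of the two bounds produces precisely the $\min$ inside the exponent in the statement; then the symmetric argument applied to $-X_i$ and a union bound produce the factor of $2$ and the two-sided deviation.

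The main obstacle is the MGF bound in the sub-exponential regime: one has to be careful that the bound $\mb{E}[\exp(\lambda X)]\le \exp(C\lambda^2 K^2)$ holds only for $|\lambda|$ small relative to $1/K$, unlike the sub-Gaussian case. This is exactly why the tail has two regimes (Gaussian for small $t$, exponential for large $t$) and why the optimization over $\lambda$ must be split into the two cases described above. Everything else is bookkeeping with absolute constants.
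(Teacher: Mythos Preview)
Your proof is correct and is essentially the standard argument (the one in Vershynin's book, which is the cited source). The paper does not give its own proof of this statement: it is quoted as a preliminary with the citation \cite[Theorem~2.8.1]{Ver18}, so there is nothing to compare against beyond noting that your argument matches the textbook proof.

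One point worth flagging: you correctly observed that the statement as written in the paper omits the mean-zero hypothesis present in Vershynin's Theorem~2.8.1. Without centering the inequality is false (take $X_i\equiv 1$), so your reduction to the centered case is not merely a convenience but a necessary correction. In the paper's applications of \cref{thm:bernstein} the variables are always of the form $X_j-1$ with $X_j\sim\Delta$ and $\mb{E}[\Delta]=1$, so they are indeed centered and the omission is harmless in context.
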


Next we will require the Azuma--Hoeffding inequality (see \cite[Theorem~2.25]{JLR00}).
\begin{lemma}[Azuma--Hoeffding inequality]\label{lem:azuma}
Let $X_0, \ldots, X_n$ form a martingale sequence such that $|X_k-X_{k-1}|\le c_k$ almost surely. Then 
\[\mb{P}[|X_0-X_n|\ge t]\le 2\exp\bigg(-\frac{t^2}{2\sum_{k=1}^nc_k^2}\bigg)\]
\end{lemma}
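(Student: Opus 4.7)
The plan is to use the standard exponential-moment (Chernoff) method for martingales, reducing the tail bound to a pointwise bound on conditional moment generating functions via Hoeffding's lemma and then iterating through the filtration. By replacing $X_k$ with $X_k - X_0$ I may assume $X_0 = 0$, and by applying the one-sided bound to both $X_n$ and $-X_n$ (which is also a martingale with the same difference bounds) and union-bounding, it suffices to prove
\[\mb{P}[X_n \ge t] \le \exp\bigg(-\frac{t^2}{2\sum_{k=1}^n c_k^2}\bigg).\]

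First, for any $\lambda > 0$ I apply Markov's inequality to $e^{\lambda X_n}$ to obtain $\mb{P}[X_n \ge t] \le e^{-\lambda t}\mb{E}[e^{\lambda X_n}]$. Writing $Y_k = X_k - X_{k-1}$ for the martingale differences and conditioning on the filtration $\mc{F}_{k-1}$, the martingale property gives $\mb{E}[Y_k \mid \mc{F}_{k-1}] = 0$ while the hypothesis gives $|Y_k| \le c_k$ almost surely. Hoeffding's lemma then yields the pointwise almost sure bound $\mb{E}[e^{\lambda Y_k} \mid \mc{F}_{k-1}] \le \exp(\lambda^2 c_k^2/2)$. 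This is proven by chord convexity: on $[-c_k, c_k]$ the function $y \mapsto e^{\lambda y}$ lies below its chord, so taking the conditional expectation kills the linear term and leaves $\cosh(\lambda c_k)$, which is majorized by $\exp(\lambda^2 c_k^2/2)$ via a term-by-term Taylor comparison.

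Second, iterating via the tower property and the $\mc{F}_{k-1}$-measurability of $e^{\lambda X_{k-1}}$ gives
\[\mb{E}[e^{\lambda X_n}] = \mb{E}\bigl[e^{\lambda X_{n-1}}\mb{E}[e^{\lambda Y_n}\mid\mc{F}_{n-1}]\bigr] \le \exp(\lambda^2 c_n^2/2)\cdot\mb{E}[e^{\lambda X_{n-1}}] \le \cdots \le \exp\bigg(\frac{\lambda^2}{2}\sum_{k=1}^n c_k^2\bigg).\]
Substituting into the Markov bound and optimizing in $\lambda$ at $\lambda = t/\sum_k c_k^2$ gives the one-sided bound, and the symmetrization completes the proof.

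The main ``obstacle'' here is merely bookkeeping: this is a classical result that could equivalently be cited directly from \cite{JLR00}. The only point requiring genuine care is verifying that Hoeffding's lemma applies conditionally, which it does because the chord-convexity estimate holds pointwise in $y$ and therefore pointwise in $\omega$ when applied to $Y_k(\omega)$; this is what permits commuting the bound through the outer expectation in the iterative step.
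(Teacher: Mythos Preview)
Your proof is correct and is the standard exponential-moment argument for Azuma--Hoeffding. The paper itself does not prove this lemma at all; it simply states it with a citation to \cite[Theorem~2.25]{JLR00}, which you yourself note would suffice.
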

\begin{remark}
We will refer to $\sum_{k=1}^nc_k^2$ as the \emph{variance proxy} in such a situation.
\end{remark}

Furthermore we will require the Carbery--Wright theorem \cite{CW01} for which prove that low-degree functions of Gaussians are anticoncentrated; we will only require the quadratic case.
\begin{theorem}[{see e.g. \cite[Theorem~1.4]{MNV16}}]\label{thm:gauss-anti}
Fix an integer $d\ge 1$. There exists a constant $C_d$ such that the following holds. For any $\eps>0$, if $(G_i)_{1\le i\le n}$ are independent Gaussian random variables, and $P$ is a polynomial of degree at most $d$ then 
\[\sup_{t\in \mb{R}}\mb{P}\big[|P(G_1,\ldots,G_n)-t|\le\eps\sqrt{\mr{Var}(P(G_1,\ldots,G_n))}\big]\le C_d\eps^{1/d}.\]
\end{theorem}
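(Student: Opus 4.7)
The plan is to proceed via the standard route for proving Carbery--Wright: first reduce to a normalized negative-moment bound, then apply an inductive argument on the degree. By replacing $P$ with $P - t$ and rescaling, we may assume $t = 0$, $\mb{E}[P] = 0$, and $\mr{Var}(P) = 1$, so the goal becomes $\mb{P}[|P(G)| \le \eps] \le C_d \eps^{1/d}$ under $\snorm{P}_{L^2(\gamma)} = 1$, where $\gamma$ is the standard Gaussian measure on $\mb{R}^n$. The key analytic statement to establish is the negative-moment bound
\[\int |P|^{-1/d}\,d\gamma \le C_d\]
uniformly over polynomials of degree at most $d$ with unit $L^2$ norm. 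Granting this, anti-concentration follows by a one-line Markov inequality: $\mb{P}[|P| \le \eps] \le \eps^{1/d}\int |P|^{-1/d}\,d\gamma \le C_d\eps^{1/d}$.

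To prove the negative-moment bound, I would induct on the degree $d$, with the main analytic input being Gaussian hypercontractivity (Nelson's inequality), stating that for any degree $d$ Gaussian polynomial $Q$ and $q \ge 2$, $\snorm{Q}_q \le (q-1)^{d/2}\snorm{Q}_2$. In particular, this gives strong upper-tail control on $P$ and on $\nabla P$, both of which will be needed. For $d = 1$, the polynomial $P$ is a unit-variance Gaussian and the bound is a direct one-dimensional computation (with a mild truncation since $|t|^{-1}$ is only barely non-integrable at the origin against $\gamma$). For the inductive step, the classical Carbery--Wright strategy is to slice: fiber Gaussian space along a generic direction $u$, so that $P$ restricted to almost every affine line parallel to $u$ is a degree $d$ polynomial on $\mb{R}$. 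On each such line, the zero set has at most $d$ points, and $|P|^{-1/d}$ is integrable against the one-dimensional Gaussian measure with a constant depending on the leading coefficient of the restriction. Averaging over the perpendicular Gaussian coordinates and using that the leading coefficient of the restriction is itself a polynomial of degree at most $d-1$ in the transverse variables (to which the inductive hypothesis applies) should close the induction.

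The main obstacle is maintaining the sharp exponent $1/d$ together with a dimension-free constant $C_d$ through this recursion. A naive reduction yields only weaker exponents such as $\eps^{c/d^2}$, because the coupling between the one-dimensional integration along the slicing direction and the transverse inductive bound loses a polynomial factor at each step. The correct $1/d$ scaling reflects the worst-case model $t \mapsto t^d$, for which $\{|t|^d \le \eps\}$ has Lebesgue measure $2\eps^{1/d}$, and extracting this exponent requires a precise accounting: the slicing must be carried out using a Gaussian direction so as to preserve the product structure and allow hypercontractivity to control the transverse leading coefficient, while the one-dimensional step must exploit the full integrability of $|p(s)|^{-1/d}$ against Gaussian rather than Lebesgue measure on all of $\mb{R}$. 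Formalizing this interplay and solving the resulting recursion on $C_d$ in closed form (which is where the various existing proofs differ in bookkeeping) is the technical heart of the argument.
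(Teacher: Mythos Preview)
The paper does not prove this statement: it is quoted as the Carbery--Wright inequality with a citation to \cite[Theorem~1.4]{MNV16} and used as a black box, so there is no ``paper's own proof'' to compare against. Your proposal is therefore an attempt to reprove a classical result that the authors simply import.

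On the substance of your sketch, two remarks. First, the reduction ``we may assume $\mb{E}[P]=0$'' is not actually available after the substitution $P\mapsto P-t$; what you can assume is only $t=0$ and $\mr{Var}(P)=1$. This is harmless, since $\snorm{P}_{L^2(\gamma)}\ge\sqrt{\mr{Var}(P)}$ means the negative-moment bound under $L^2$-normalization still yields the conclusion, but the text as written is slightly off. Second, your claim that after slicing along a direction the leading coefficient of the restricted one-variable polynomial is ``a polynomial of degree at most $d-1$ in the transverse variables'' is not correct: if you parametrize lines as $x_0+su$ then the coefficient of $s^d$ is the top-degree homogeneous part of $P$ evaluated at $u$, which is constant in $x_0$. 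The difficulty is rather that this constant may vanish or be tiny (e.g.\ when $\deg P<d$ or $P$ is nearly degenerate in the chosen direction), so the induction must instead track the highest nonvanishing coefficient, which does vary transversally, or else average over the slicing direction. You correctly flag that maintaining the sharp exponent $1/d$ through this recursion is the crux, and indeed the standard proofs (Carbery--Wright via log-concavity, or the Nazarov--Sodin--Volberg approach) handle this via a more delicate mechanism than the outline here resolves.
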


We will also require the invariance principle of Mossel, O'Donnell, and Oleszkiewicz \cite{MOO10}. The version stated in \cref{thm:invariance} below is a stated as \cite[(11.66)]{O14} (with the necessary hypercontractivity following from \cite[Proposition~3.16]{MOO10}).

\begin{definition}\label{def:influence}
Given a multilinear polynomial $g(x_1,\ldots,x_n) = \sum_{S\subseteq [n]} a_S\prod_{i\in S}x_i$, for $t=1,\ldots,n$ the \emph{influence} of the variable $x_t$ is defined as
\[\on{Inf}_t[g] = \sum_{\substack{S\subseteq[n]\\S\ni t}} a_S^2.\]
\end{definition}
\begin{theorem}\label{thm:invariance}
Fix $M\ge 1$; there exists $M'>0$ such that the following holds. Let $g$ be an $n$-variable multilinear polynomial of degree at most $k$. Let $\vec{y}$ be uniformly random vector such that $\mb{E}[y_i] = 0$, $\mb{E}[y_i^2] = 1$ and $\mb{E}[|y_i|^3]\le M$. Let $\vec{z}\sim\mc{N}(0,1)^{\otimes n}$ be a vector of independent standard Gaussian random variables. Then for any three-times-differentiable function $\psi\colon\mb{R}\to\mb{R}$, we have \[\Big|\mb{E}[\psi(g(\vec{y}))-\psi(g(\vec{z}))]\Big|
\le (M')^k\cdot \snorm{\psi^{(3)}}_{\infty}\sum_{t=1}^n\on{Inf}_t[g]^{3/2}.\]
\end{theorem}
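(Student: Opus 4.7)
The plan is to prove this standard invariance principle by the Lindeberg exchange method, coupled with hypercontractivity of low-degree polynomials in hypercontractive independent random variables. First, I would interpolate between the two distributions: set $\vec{w}^{(t)} = (z_1, \ldots, z_t, y_{t+1}, \ldots, y_n)$ for $0 \le t \le n$, so that $\vec{w}^{(0)} = \vec{y}$ and $\vec{w}^{(n)} = \vec{z}$. Then by telescoping it suffices to bound each per-coordinate swap
\[\big|\mb{E}[\psi(g(\vec{w}^{(t-1)})) - \psi(g(\vec{w}^{(t)}))]\big|,\]
summed over $t = 1, \ldots, n$.

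For a fixed coordinate $t$, I would use multilinearity to write $g(\vec{x}) = g_0(\vec{x}_{-t}) + x_t \cdot h(\vec{x}_{-t})$, where $h = \partial_t g$ is a multilinear polynomial of degree at most $k - 1$ independent of $x_t$. Let $g_0^*, h^*$ denote $g_0, h$ evaluated at the common coordinates of $\vec{w}^{(t-1)}$ and $\vec{w}^{(t)}$; these are independent of both $y_t$ and $z_t$. Taylor expanding $\psi$ to second order around $g_0^*$, taking expectations, and using $\mb{E}[y_t] = \mb{E}[z_t] = 0$, $\mb{E}[y_t^2] = \mb{E}[z_t^2] = 1$, the zeroth-, first-, and second-order contributions cancel exactly, leaving the third-order remainder:
\[\big|\mb{E}[\psi(g(\vec{w}^{(t-1)})) - \psi(g(\vec{w}^{(t)}))]\big| \le \tfrac{1}{6}\snorm{\psi^{(3)}}_\infty \big(\mb{E}[|y_t|^3] + \mb{E}[|z_t|^3]\big) \cdot \mb{E}[|h^*|^3].\]

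The main obstacle is producing the $\on{Inf}_t[g]^{3/2}$ factor, which I would extract by hypercontractivity. Note that $\mb{E}[(h^*)^2] = \sum_{S \ni t} a_S^2 = \on{Inf}_t[g]$, so the task reduces to proving $\mb{E}[|h^*|^3] \lesssim_M (C')^k \cdot \mb{E}[(h^*)^2]^{3/2}$. By Hölder's inequality it suffices to bound $\mb{E}[(h^*)^4] \le C^{4k} \mb{E}[(h^*)^2]^2$. Since $h^*$ is a degree-$\le(k-1)$ multilinear polynomial in a product of independent mean-zero variance-one random variables that are each either standard Gaussian or satisfy the third-moment bound $M$, each coordinate enjoys a $(4,2)$-hypercontractive estimate with a constant depending only on $M$ (this is the content of the Bonami--Beckner type inequality used in Mossel--O'Donnell--Oleszkiewicz, where it is proven that mean-zero variance-one variables with bounded higher moments are $(q,2)$-hypercontractive for some $q > 2$ with a finite constant; hypercontractivity tensorizes across independent coordinates, so the mixed distribution is still hypercontractive). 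Applying this gives $\snorm{h^*}_4 \le C^k \snorm{h^*}_2$ for an absolute $C = C(M)$, which yields the desired cubic moment bound.

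Summing the per-swap error over $t$ produces
\[\big|\mb{E}[\psi(g(\vec{y})) - \psi(g(\vec{z}))]\big| \le (M')^k \snorm{\psi^{(3)}}_\infty \sum_{t=1}^n \on{Inf}_t[g]^{3/2}\]
after absorbing the factor $(M + \mb{E}[|\mc{N}(0,1)|^3])$ and the hypercontractivity constants into $(M')^k$. The only nontrivial ingredient is the hypercontractivity estimate for degree-$(k-1)$ polynomials in the mixed coordinate distribution; everything else is a direct Taylor expansion combined with cancellation of matching moments. Since this is the version already recorded in O'Donnell's textbook, in the write-up I would simply cite that reference rather than reproduce the argument in full.
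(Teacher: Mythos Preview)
Your sketch is correct and is exactly the standard Lindeberg exchange plus hypercontractivity argument underlying the cited references. Note that the paper does not supply its own proof of this statement: it is recorded in the preliminaries section as a quotation of \cite[(11.66)]{O14} with hypercontractivity taken from \cite[Proposition~3.16]{MOO10}, so your final remark that one should simply cite the reference is precisely what the paper does.
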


Next, we will require the following concentration inequality for low-degree polynomials of Gaussian; the Rademacher case is stated as \cite[Theorem~9.23]{O14} and the Gaussian case follows by taking limits via and applying the central limit theorem.
\begin{theorem}\label{thm:concentration-hypercontractivity}
Let $f$ be a polynomial in $n$ variables of degree at most $d$. Let $\vec{x}=(x_1,\ldots,x_n)$ either be a vector of independent standard Gaussian random variables. Then for any $t\ge (2e)^{d/2}$,
\[\Pr\left[|f(\vec x)|\ge t(\mb{E}[f(\vec{x})^2])^{1/2}\right]\le \exp\left(-\frac{d}{2e} t^{2/d}\right).\]
\end{theorem}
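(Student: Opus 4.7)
The plan is to combine the Gaussian $(2,q)$-hypercontractive inequality with Markov's inequality and optimize the moment parameter. The key input is the hypercontractivity estimate: for every polynomial $f$ of degree at most $d$ in independent standard Gaussians and every real $q\ge 2$,
\[\mb{E}\bigl[|f(\vec{x})|^q\bigr]^{1/q}\le (q-1)^{d/2}\,\mb{E}\bigl[f(\vec{x})^2\bigr]^{1/2}.\]
This is what the remark preceding the theorem is pointing at: one may derive it either as the CLT limit of the Rademacher Bonami--Beckner inequality (\cite[Theorem 9.23]{O14} plus a standard approximation of Gaussians by scaled Rademacher sums), or by applying the Nelson--Gross hypercontractive semigroup estimate directly to the Hermite expansion of $f$ (using that the Ornstein--Uhlenbeck semigroup acts by $r^k$ on the degree-$k$ Hermite component, and the $(2,q)$-hypercontractive threshold is $r=1/\sqrt{q-1}$).

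Given this bound, the proof is essentially two lines. First I would apply Markov's inequality to the nonnegative random variable $|f(\vec{x})|^q$: with $s := \mb{E}[f(\vec{x})^2]^{1/2}$, for any $q\ge 2$,
\[\mb{P}\bigl[|f(\vec{x})|\ge ts\bigr]\le \frac{\mb{E}[|f(\vec{x})|^q]}{t^q s^q}\le \left(\frac{(q-1)^{d/2}}{t}\right)^{q}.\]
Then I would optimize by choosing $q := t^{2/d}/e$. This choice is admissible, i.e.\ satisfies $q\ge 2$, precisely when $t^{2/d}\ge 2e$, equivalently $t\ge(2e)^{d/2}$, which is exactly the hypothesis of the theorem. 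Under this choice one has
\[(q-1)^{d/2}\le q^{d/2}=\bigl(t^{2/d}/e\bigr)^{d/2}=t\, e^{-d/2},\]
so the bound collapses to
\[\bigl(e^{-d/2}\bigr)^{q}=\exp\!\left(-\frac{dq}{2}\right)=\exp\!\left(-\frac{d}{2e}\,t^{2/d}\right),\]
which is precisely the claimed inequality.

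There is no substantive obstacle: once one is willing to invoke Gaussian hypercontractivity, every subsequent step is routine. The only book-keeping is verifying that the admissibility condition $q\ge 2$ translates into the stated tail regime $t\ge (2e)^{d/2}$, which accounts for the precise constants appearing in the theorem. I expect the only conceptual content to be confirming the validity of the Gaussian version of hypercontractivity for polynomials (rather than just multilinear forms), which in the Gaussian setting is standard since one may expand $f$ in the Hermite basis without loss of generality and hypercontractivity respects the Hermite degree filtration.
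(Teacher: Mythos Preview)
Your proof is correct and is precisely the standard hypercontractivity-plus-Markov argument the paper is pointing at: the paper does not give its own proof but merely cites \cite[Theorem~9.23]{O14} for the Rademacher case and remarks that the Gaussian case follows by the central limit theorem. Your write-up spells out exactly this route, including the optimization $q=t^{2/d}/e$ that produces the stated constants, so there is nothing to add.
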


We also require a statement allowing one to quantify the convergence in distribution of a random variable given convergence of the associated Fourier transform. The following result is immediate from \cite[p.~104,~Theorem~1]{Pet12}; this is an essentially standard inequality used in the proof of the Berry--Esseen theorem.
\begin{theorem}\label{thm:fourier-convert}
There exists an absolute constant $C = C_{\ref{thm:fourier-convert}}>0$ such that the following statement holds. Consider a pair of random variables $X$ and $Y$ and a parameter $T>0$. We have that 
\[\sup_{\tau\in\mb{R}}|\mb{P}[X\le \tau]-\mb{P}[Y\le \tau]|\le C_{\ref{thm:fourier-convert}}\bigg(\int_{-T}^{T}\frac{|\mb{E}[\exp(itX)-\exp(itY)]|}{|t|}dt + \sup_{\tau\in\mb{R}}\mb{P}[|Y-\tau|\le 1/T]\bigg).\]
\end{theorem}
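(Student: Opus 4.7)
The plan is to follow the classical Esseen-type smoothing approach underlying the Berry--Esseen theorem. The strategy is to convolve both $X$ and $Y$ with a kernel whose Fourier transform is compactly supported in $[-T,T]$, reducing the problem to (i) bounding the discrepancy of the smoothed CDFs via a direct Fourier inversion formula and (ii) accounting for the smoothing error in terms of the concentration function of $Y$.

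First I would introduce an auxiliary random variable $V$, independent of $(X,Y)$, whose density is the scaled Fej\'er kernel $\rho(v)=(1-\cos(Tv))/(\pi T v^{2})$. This satisfies $\int\rho=1$, $\rho\ge 0$, has characteristic function $\hat\rho(t)=\max(1-|t|/T,0)$ supported in $[-T,T]$, and (apart from polynomially decaying tails) concentrates on the scale $1/T$. Setting $X':=X+V$ and $Y':=Y+V$, the characteristic functions multiply, both smoothed laws have continuous bounded densities, and Fourier inversion yields
\[
\mb{P}[X'\le \tau]-\mb{P}[Y'\le\tau]=\frac{1}{2\pi}\int_{-T}^{T}\frac{\mb{E}[e^{itX}-e^{itY}]}{-it}\,\hat\rho(t)e^{-it\tau}\,dt,
\]
so taking $\sup_\tau$ and absolute values bounds this quantity by the first term in the target estimate.

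It remains to account for the smoothing error. Writing $\mb{P}[Y\le\tau]-\mb{P}[Y'\le\tau]=\mb{E}[F_Y(\tau)-F_Y(\tau-V)]$ and splitting by whether $|V|\le 1/T$ or not, the small-$V$ part is bounded by $\sup_\tau\mb{P}[|Y-\tau|\le 1/T]$ while the large-$V$ part is controlled by the polynomial tail of $\rho$. The same bound applies to $X$ in terms of its own concentration function, but the latter is dominated by $\sup_\tau\mb{P}[|Y-\tau|\le 1/T]$ plus $\sup_\tau|\mb{P}[X\le\tau]-\mb{P}[Y\le\tau]|$ via the triangle inequality. Letting $M:=\sup_\tau|\mb{P}[X\le\tau]-\mb{P}[Y\le\tau]|$, these combine to give an inequality of the form $M\le (\text{Fourier integral})+C\sup_\tau\mb{P}[|Y-\tau|\le 1/T]+\tfrac12 M$, with the last term absorbable into the left side to yield the claim.

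The only (standard) obstacle is handling the smoothing error without assuming $Y$ has a bounded density: one must work directly with the concentration function of $Y$, which requires slightly delicate but routine bookkeeping of the tails of $\rho$ in the convolution. Since the inequality itself is entirely classical, the paper simply cites it from Petrov \cite{Pet12}.
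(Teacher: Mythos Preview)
The paper does not prove this statement; it is stated as ``immediate from \cite[p.~104,~Theorem~1]{Pet12}'' and simply cited, as you correctly note at the end. So there is nothing to compare against except the standard textbook argument.

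Your outline is the right classical strategy, but there is a genuine gap in the smoothing-error step as written. With the Fej\'er kernel $\rho(v)=(1-\cos(Tv))/(\pi T v^2)$, the tail only decays like $v^{-2}$, so $\mb{P}[|V|>a/T]\asymp 1/a$ and $\mb{E}|V|=\infty$. Consequently, your proposed bound on $\mb{E}[F_Y(\tau)-F_Y(\tau-V)]$ by splitting at $|V|\le 1/T$ fails: on the region $|V|>1/T$ you cannot simply invoke ``the polynomial tail of $\rho$'', because dyadically one gets $\sum_k \mb{P}[|V|\in[2^k/T,2^{k+1}/T]]\cdot 2^{k+1}Q_Y(1/T)\asymp\sum_k Q_Y(1/T)$, which diverges. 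The same issue blocks the absorption step, since with this kernel the constant in front of $M$ on the right exceeds $1$, not $\tfrac12$.

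There are two standard fixes. Either replace the Fej\'er kernel by one whose Fourier transform is still supported in $[-T,T]$ but whose density decays faster than $v^{-2}$ (for instance a suitable power or self-convolution), after which your three-term decomposition and absorption go through verbatim. Or keep the Fej\'er kernel but use the monotonicity argument (as in Petrov or Feller): choose $\tau_0$ nearly attaining $M=\sup_\tau|F_X(\tau)-F_Y(\tau)|$, observe that $F_X-F_Y$ stays within $Q_Y(a/T)$ of $M$ on a one-sided interval of length $2a/T$ (using only that $F_X$ is nondecreasing and the concentration of $Y$), and evaluate the smoothed difference at the midpoint of that interval to obtain $M(1-2p)\le \|F_{X'}-F_{Y'}\|_\infty + CQ_Y(1/T)$ with $p=\mb{P}[|V|>a/T]<\tfrac12$ for a suitable absolute constant $a$. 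This avoids bounding $F_X-F_{X'}$ and $F_Y-F_{Y'}$ separately, which is where your sketch breaks down.
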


Next we will require a multidimensional version of Ess\'{e}en's concentration inequality.
\begin{theorem}[{\cite[Lemma~7.17]{TV10}}]\label{thm:esseen}
There exists an absolute constant $C = C_{\ref{thm:esseen}}>0$ such that the following statement holds. Given a random variable $X$ in $\mb{R}^d$, we have that 
\[\sup_{\tau\in\mb{R}^d}\mb{P}[\snorm{X-\tau}_2\le \eps] \le \bigg(\frac{C_{\ref{thm:esseen}}\eps}{\sqrt{d}}\bigg)^{d}\int_{\substack{\vec{\xi}\in \mb{R}^d\\\snorm{\vec{\xi}}_2\le d/\eps}}|\mb{E}[\exp(2\pi i \vec{\xi} \cdot X)]|d\vec{\xi}.\]
\end{theorem}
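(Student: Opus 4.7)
The plan is to prove this via a standard Fourier smoothing argument, reducing the anticoncentration bound on $X$ to a Fourier bound by majorizing the indicator of a small ball by a function with compactly supported Fourier transform. First, I would reduce to the case $\tau = \vec{0}$: since $|\mb{E}[\exp(2\pi i \vec{\xi} \cdot (X - \tau))]| = |\mb{E}[\exp(2\pi i \vec{\xi} \cdot X)]|$ for any fixed $\tau \in \mb{R}^d$, translating $X$ by $-\tau$ preserves both sides, so it suffices to prove the inequality with $\mb{P}[\snorm{X}_2 \le \varepsilon]$ on the left.

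Next I would construct a Schwartz function $\phi\colon\mb{R}^d \to [0, \infty)$ satisfying $\phi(\vec{x}) \ge 1$ whenever $\snorm{\vec{x}}_2 \le \varepsilon$, with $\wh{\phi}$ supported in the ball $\snorm{\vec{\xi}}_2 \le d/\varepsilon$, and with $\snorm{\wh{\phi}}_\infty \le (C\varepsilon/\sqrt{d})^d$ for some absolute $C$. The canonical construction is $\phi = |h|^2$, where $h$ is the inverse Fourier transform of a real, radial, nonnegative bump supported in the ball of radius $d/(2\varepsilon)$; by the convolution theorem $\wh{\phi}$ is automatically supported in the ball of radius $d/\varepsilon$. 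Normalizing the bump so that $|h(\vec{x})|^2 \ge 1$ throughout $B(\vec{0}, \varepsilon)$, which is feasible since $h$ varies on the dual scale $\varepsilon/d$, gives the lower bound. The sharp $L^\infty$ bound on $\wh{\phi}$ comes from comparing $\wh{\phi}(\vec{0}) = \snorm{\phi}_{L^1}$ against $\on{vol}(B(\vec{0}, \varepsilon)) = \pi^{d/2}\varepsilon^d/\Gamma(d/2+1) = \Theta((\varepsilon/\sqrt{d})^d)$ by Stirling, which is matched up to a constant by a sufficiently optimized autoconvolution ansatz.

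Once the majorant is in hand, the argument concludes by Fourier inversion:
\[\mb{P}[\snorm{X}_2 \le \varepsilon] \le \mb{E}[\phi(X)] = \int_{\mb{R}^d}\wh{\phi}(\vec{\xi})\,\mb{E}[\exp(2\pi i \vec{\xi}\cdot X)]\,d\vec{\xi}.\]
Taking absolute values inside the integral, bounding $|\wh{\phi}(\vec{\xi})| \le \snorm{\wh{\phi}}_\infty$ pointwise, and restricting the integration to $\on{supp}(\wh{\phi}) \subseteq \{\snorm{\vec{\xi}}_2 \le d/\varepsilon\}$ produces the stated inequality.

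The main obstacle is producing the majorant with the sharp dependence $(C\varepsilon/\sqrt{d})^d$ rather than the trivial $C^d\varepsilon^d$: this requires choosing the underlying bump so that it essentially saturates the uncertainty principle in high dimensions (concentrated near the origin at the dual scale), and verifying via a Taylor-type estimate that $|h|^2$ exceeds $1$ \emph{simultaneously} across all of $B(\vec{0}, \varepsilon)$ despite the shrinking oscillation scale. After this construction, the rest of the proof is purely mechanical Fourier inversion.
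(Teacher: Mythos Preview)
The paper does not prove this statement itself; it is quoted from \cite[Lemma~7.17]{TV10} as a preliminary. Your strategy---reduce to $\tau=\vec{0}$, majorize the ball indicator by a nonnegative $\phi=|h|^2$ whose Fourier transform is compactly supported, apply Fourier inversion and bound $|\wh{\phi}|\le\snorm{\wh{\phi}}_\infty$---is exactly the standard one and matches the cited reference.

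One remark on the execution: your heuristic for the lower bound, that $|h|^2\ge 1$ on $B(\vec{0},\varepsilon)$ ``is feasible since $h$ varies on the dual scale $\varepsilon/d$,'' has the difficulty slightly inverted. Because $\varepsilon/d\ll\varepsilon$, a generic $h$ with $\wh{h}$ supported in $B(\vec{0},d/(2\varepsilon))$ oscillates many times across $B(\vec{0},\varepsilon)$ and will typically vanish somewhere inside it; a naive Taylor estimate around the origin only controls $h$ out to radius $O(\varepsilon/d)$, not $\varepsilon$. The constructions that actually achieve the sharp $(C\varepsilon/\sqrt{d})^d$ bound---for instance taking $\wh{h}=\chi_{B(\vec{0},R)}$ with $R$ a suitable constant multiple of $d/\varepsilon$, using that the first zero of the resulting radial Bessel profile sits at $|x|\asymp j_{d/2,1}/R\asymp\varepsilon$, and then quantifying how slowly $|h|$ decays before that zero---require genuine input beyond a Taylor expansion. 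You correctly flag this as the main obstacle, but the sketch as written does not yet close it.
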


Finally we will require the following consequence of Fourier inversion on lattices.
\begin{theorem}\label{thm:inversion-formula}
Given a bounded random variables $T\in\mb{Z}^d$ and $X\in\mb{R}$, possibly dependent, we have 
\[\mb{E}[\mbm{1}_{T = \vec{t}}X] = (2\pi)^{-d}\int_{[-\pi,\pi]^{d}}\exp(-i\vec{t}\cdot\vec{\xi})\mb{E}[X\exp(i\vec{\xi}\cdot T)]d\vec{\xi}.\]
\end{theorem}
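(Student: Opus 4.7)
The plan is to evaluate the right-hand side by swapping the order of integration and expectation, and then invoking the standard character-orthogonality identity on the torus. Since $X$ is bounded and $T$ takes values in a bounded subset of $\mathbb{Z}^d$, the integrand $(\vec{\xi},\omega) \mapsto \exp(-i\vec{t}\cdot\vec{\xi})\,X(\omega)\exp(i\vec{\xi}\cdot T(\omega))$ is bounded on $[-\pi,\pi]^d \times \Omega$ and thus absolutely integrable with respect to $d\vec{\xi}\otimes d\mb{P}$, so Fubini applies.

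After swapping, I would pull $X$ outside the inner integral (it does not depend on $\vec{\xi}$) to obtain
\[(2\pi)^{-d}\int_{[-\pi,\pi]^d}\exp(-i\vec{t}\cdot\vec{\xi})\mb{E}[X\exp(i\vec{\xi}\cdot T)]d\vec{\xi} = \mb{E}\left[X\cdot (2\pi)^{-d}\int_{[-\pi,\pi]^d}\exp\bigl(i\vec{\xi}\cdot(T-\vec{t})\bigr)d\vec{\xi}\right].\]
Next I would use the fact that $T-\vec{t}\in\mb{Z}^d$ almost surely, together with the coordinatewise identity $(2\pi)^{-1}\int_{-\pi}^\pi e^{ik\xi}\,d\xi=\mbm{1}_{k=0}$ for integer $k$, which tensorizes to
\[(2\pi)^{-d}\int_{[-\pi,\pi]^d}\exp(i\vec{\xi}\cdot\vec{k})\,d\vec{\xi}=\mbm{1}_{\vec{k}=\vec{0}} \qquad \text{for } \vec{k}\in\mb{Z}^d.\]
Applying this pointwise with $\vec{k}=T(\omega)-\vec{t}$ gives that the inner integral equals $\mbm{1}_{T=\vec{t}}$, and taking the expectation recovers the left-hand side.

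There is no real obstacle: the only thing to check is the applicability of Fubini, which is immediate from the boundedness hypothesis on $X$ (and the a.s.\ boundedness of $T$, which ensures the integrand is bounded uniformly by $\snorm{X}_\infty$). If one wanted to avoid the boundedness assumption on $T$, the same argument works so long as $X$ is integrable, since the integrand is then dominated by $|X|$ which is independent of $\vec{\xi}$ and integrable over the compact domain $[-\pi,\pi]^d$.
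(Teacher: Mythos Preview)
Your proof is correct and is the standard Fubini-plus-character-orthogonality argument. The paper states this result as a preliminary without proof, so there is no alternative approach to compare against; your argument is exactly the expected one.
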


\section{Fourier coefficient bounds}\label{sec:fourier}
\subsection{Coefficient sequence}
For the purposes of proving various central limit theorem and local central limit theorems, we will consider sums
\[\sum_{j=1}^nc_j\wt{a}_j\]
with coefficient sequences $(c_j)_{1\le j\le n}$ which are more general than those arising from $\big(\sum_{1\le k<j}\wt{b}_j + \frac{\wt{b}_j}{2} - (j-1/2)\big)$, which comes out of \cref{lem:beats}. The following definitions for such sequences arises from the proof; roughly, a sequence is \emph{well-bounded} if it does not deviate much more than a simple random walk would, and it is \emph{coarse} if it further resembles such a simple random walk at some finer scales.
\begin{definition}\label{def:coarse}
We say a coefficient sequence $(c_j)_{1\le j\le n}$ is \emph{well-bounded} if the following conditions hold:
\begin{enumerate}[{\bfseries{S\arabic{enumi}}}]
    \item\label{S1} $|c_j|\le\sqrt{n}\log n$;
    \item\label{S2} $\sum_{j=1}^nc_j = 0$;
    \item\label{S3} $|c_j-c_k|\le\sqrt{|j-k|}(\log n)^2$ for all $1\le j,k\le n$;
\end{enumerate}
and we say it is \emph{coarse} if it is well-bounded and additionally the following hold:
\begin{enumerate}[{\bfseries{S\arabic{enumi}}}]
    \setcounter{enumi}{3}
    \item\label{S4} $\min_{a,b\in\mb{R}}\sum_{j=1}^n(c_j-aj-b)^2\ge n^2/(\log n)^2$;
    \item\label{S5} There are at least $n/\log n$ indices $j$ such that $c_j = c_{j+1} = c_{j+2} - 1/2$;
    \item\label{S6} For each integer $y\in [n^{1/4},n/(\log n)^2]$ there are at least $n/\log n$ indices $1\le j\le n-2y$ such that $|c_j-2c_{j+y}+c_{j+2y}|\ge\sqrt{y}$.
\end{enumerate}
\end{definition}

\subsection{Fourier estimates}\label{sub:fourier-estimates}
We now bound various Fourier expressions that will serve as a key input to our argument. We first define the basic setup.
\begin{definition}\label{def:triple-var-setup}
Let $\Delta$ be a distribution  which is either $\mr{Geom}(1/2)$ or $\mr{Pois}(1)$. Sample $X_j\sim\Delta$ independently for $1\le j\le n$ and fix a sequence $(c_j)_{1\le j\le n}$. Define the random variables
\[T_1 = \sum_{j=1}^nX_j - n,\qquad T_2 = \sum_{j=1}^njX_j-\frac{n(n+1)}{2},\qquad T_3 = 2\sum_{j=1}^nc_jX_j.\]
\end{definition}

We will be interested in Fourier coefficients of the form $\mb{E}\exp(i\vec{\Theta}\cdot(T_1,T_2,T_3))$. Our approach in general will be to reduce to essentially expressions involving Rademacher random variables and then to apply various basic bounds to conclude. (Note that we are not conditioning on the sum variable $T_1$ or ``area'' variable $T_2$ at this stage.)

\begin{fact}\label{fact:estimate}
Given $R\sim\mr{Ber}(1/2)$, $R\sim\mr{Geom}(1/2)$, or $R\sim\mr{Pois}(1)$ and $|\Theta|\le 3\pi/2$ we have 
\[|\mb{E}\exp(iR\Theta)|\le\exp(-c_{\ref{fact:estimate}}\Theta^2)\]
for some appropriate absolute constant $c_{\ref{fact:estimate}}>0$.
\end{fact}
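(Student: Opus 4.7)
The plan is to directly compute the characteristic function $\varphi_R(\Theta) = \mathbb{E}\exp(iR\Theta)$ in each of the three cases and verify the stated Gaussian-type upper bound by elementary manipulations together with a compactness argument. Specifically, one has $\varphi_R(\Theta) = e^{i\Theta/2}\cos(\Theta/2)$ for $R\sim\mathrm{Ber}(1/2)$, so $|\varphi_R(\Theta)| = |\cos(\Theta/2)|$; one has $\varphi_R(\Theta) = (2-e^{i\Theta})^{-1}$ for $R\sim\mathrm{Geom}(1/2)$, so $|\varphi_R(\Theta)| = (5-4\cos\Theta)^{-1/2}$; and one has $\varphi_R(\Theta) = \exp(e^{i\Theta}-1)$ for $R\sim\mathrm{Pois}(1)$, so $|\varphi_R(\Theta)| = \exp(\cos\Theta - 1)$.

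Taking logarithms, it suffices to establish in all three cases an inequality of the form $-\log|\varphi_R(\Theta)| \ge c\Theta^2$ on $[-3\pi/2, 3\pi/2]$. For Bernoulli this amounts to $-\log|\cos(\Theta/2)| \ge c\Theta^2$; for Geometric it amounts to $\tfrac{1}{2}\log(5 - 4\cos\Theta) \ge c\Theta^2$; and for Poisson it amounts to $1-\cos\Theta \ge c\Theta^2$. In each case the left-hand side is a nonnegative continuous function of $\Theta$ which vanishes precisely when $\cos\Theta = 1$ (for Geometric and Poisson) or when $\cos(\Theta/2) = \pm 1$ (for Bernoulli). In all three cases the only zero inside $[-3\pi/2, 3\pi/2]$ is at $\Theta = 0$, since $\cos\Theta = 1$ forces $\Theta \in 2\pi\mathbb{Z}$ and $\cos(\Theta/2) = \pm 1$ forces $\Theta \in 2\pi\mathbb{Z}$ as well.

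Given this, I would conclude by combining a local Taylor expansion around $\Theta = 0$ with a compactness argument away from $\Theta = 0$. Near zero the Taylor expansions $1-\cos\Theta = \Theta^2/2 + O(\Theta^4)$, $\log(5-4\cos\Theta) = 2\Theta^2 + O(\Theta^4)$, and $-\log|\cos(\Theta/2)| = \Theta^2/8 + O(\Theta^4)$ immediately give the required quadratic lower bound on a neighborhood of $0$. On the complement, namely $\delta \le |\Theta| \le 3\pi/2$, the function $-\log|\varphi_R(\Theta)|/\Theta^2$ is continuous and strictly positive, hence attains a positive minimum, yielding the desired uniform constant $c_{\ref{fact:estimate}}>0$.

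There is really no serious obstacle here: the entire statement reduces to a finite check against three explicit elementary functions of a single real variable on a compact interval. The only mild subtlety is to confirm that the interval $[-3\pi/2, 3\pi/2]$ (which is wider than $2\pi$ only by half on each side) does not accidentally contain a second zero of $1-\cos\Theta$ or of $1 - |\cos(\Theta/2)|$, but as noted above these zeros lie in $2\pi\mathbb{Z}$ and so only $\Theta = 0$ is picked up.
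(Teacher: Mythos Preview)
Your approach is correct and essentially identical to the paper's, which also computes the three characteristic functions explicitly and then appeals to ``simple bounds based on Taylor series.'' One tiny imprecision: in the Bernoulli case $-\log|\cos(\Theta/2)|$ blows up at $\Theta=\pm\pi\in[-3\pi/2,3\pi/2]$ rather than being continuous there, but this only helps the lower bound and your compactness argument goes through unchanged.
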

\begin{proof}
This follows immediately from the explicit computation that
\begin{align*}
|\mb{E}\exp(i\Theta R)|=\begin{cases}|\cos(\Theta/2)|,&\text{if }R\sim\mr{Ber}(1/2),\\|2-\exp(i\Theta)|^{-1},&\text{if }R\sim\mr{Geom}(1/2),\\\exp(\cos(\Theta)-1),&\text{if }R\sim\mr{Pois}(1)\end{cases}
\end{align*}
and some simple bounds based on Taylor series.
\end{proof}

We first handle $\vec{\Theta}$ where $|\Theta_3|$ is large, since it is the most involved and serves as a basis for the other proofs. The key idea, which will be used to handle all the estimates present, is to extract independent random variables which isolate the effect of exactly one of the $\Theta_j$.

\begin{lemma}\label{lem:theta3-high}
Suppose that $\vec{\Theta} = (\Theta_1,\Theta_2,\Theta_3)$ is such that $n^{-1/2}(\log n)^2\le|\Theta_3|\le\pi$. Then given \cref{def:triple-var-setup} and that $(c_j)_{1\le j\le n}$ is coarse, we have
\[|\mb{E}\exp(i\vec{\Theta}\cdot(T_1,T_2,T_3))|\le n^{-\omega(1)}.\]
\end{lemma}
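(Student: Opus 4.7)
My plan is to factor the Fourier coefficient across the independent $X_j$'s and then use property \cref{S5} to produce many disjoint ``local patterns'' which each force a second-difference obstruction to $\alpha_j$ being small modulo $2\pi$.

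\textbf{Reduction to a phase sum.} First, by independence of $X_1,\ldots,X_n$ in \cref{def:triple-var-setup}, I would write
\[|\mb{E}\exp(i\vec{\Theta}\cdot(T_1,T_2,T_3))| = \prod_{j=1}^n |\mb{E}\exp(i\alpha_j X_j)|, \qquad \alpha_j := \Theta_1 + j\Theta_2 + 2c_j\Theta_3.\]
Since each $X_j$ is integer-valued the map $\alpha \mapsto \mb{E}\exp(i\alpha X_j)$ is $2\pi$-periodic, so combining \cref{fact:estimate} with this periodicity yields an absolute constant $c_0 > 0$ such that $|\mb{E}\exp(i\alpha_j X_j)| \le \exp(-c_0 \|\alpha_j\|^2_{\mb{R}/2\pi\mb{Z}})$. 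It therefore suffices to show $\sum_{j=1}^n \|\alpha_j\|^2_{\mb{R}/2\pi\mb{Z}} = \omega(\log n)$.

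\textbf{Switching step via \cref{S5}.} Next, \cref{S5} supplies at least $n/\log n$ indices $j$ with $c_j = c_{j+1} = c_{j+2} - 1/2$; a greedy selection gives at least $n/(3\log n)$ pairwise-disjoint index triples $(j, j+1, j+2)$ of this form. For any such triple the linear-in-$j$ contributions of $\Theta_1$ and $\Theta_2$ cancel in the discrete second difference:
\[\alpha_j - 2\alpha_{j+1} + \alpha_{j+2} = 2\Theta_3(c_j - 2c_{j+1} + c_{j+2}) = \Theta_3.\]
This is the relevant ``switching'' mechanism: the three-index local pattern isolates the effect of $\Theta_3$ while killing that of $\Theta_1$ and $\Theta_2$. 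Because $|\Theta_3| \le \pi$, we have $\|\Theta_3\|_{\mb{R}/2\pi\mb{Z}} = |\Theta_3|$, and the triangle inequality in $\mb{R}/2\pi\mb{Z}$ then forces one of $\|\alpha_j\|_{\mb{R}/2\pi\mb{Z}}$, $\|\alpha_{j+1}\|_{\mb{R}/2\pi\mb{Z}}$, $\|\alpha_{j+2}\|_{\mb{R}/2\pi\mb{Z}}$ to be at least $|\Theta_3|/4$.

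\textbf{Finishing.} Summing the lower bound $(|\Theta_3|/4)^2$ over the disjoint triples gives
\[\sum_{j=1}^n \|\alpha_j\|^2_{\mb{R}/2\pi\mb{Z}} \ge \frac{n}{3\log n}\cdot\frac{|\Theta_3|^2}{16} \ge \frac{(\log n)^3}{48}\]
using the hypothesis $|\Theta_3| \ge n^{-1/2}(\log n)^2$. Combined with the exponential bound this gives $|\mb{E}\exp(i\vec{\Theta}\cdot(T_1,T_2,T_3))| \le \exp(-\Omega((\log n)^3)) = n^{-\omega(1)}$, as required. The main points to verify carefully are the disjoint-triple extraction from \cref{S5}, the passage from $|\Theta_3|$ to $\|\Theta_3\|_{\mb{R}/2\pi\mb{Z}}$ (which is precisely where the upper bound $|\Theta_3|\le\pi$ is used), and the $2\pi$-periodic extension of \cref{fact:estimate}; none of these constitute a genuine obstacle, so the argument is fairly short.
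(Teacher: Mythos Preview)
Your proof is correct. Both your argument and the paper's exploit the same key observation: for indices $j$ supplied by \cref{S5}, the discrete second difference $c_j-2c_{j+1}+c_{j+2}=1/2$ annihilates the $\Theta_1$ and $\Theta_2$ contributions and isolates $\Theta_3$. The difference is in execution. The paper introduces an explicit probabilistic coupling, rewriting each triple $(X_j,X_{j+1},X_{j+2})$ as a mixture that, on an event $\{W_j=1\}$ of constant probability, reduces to a Bernoulli switch between $(0,2,0)$ and $(1,0,1)$; conditioning on the $W_j$'s then makes the $R_j$'s visible as independent $\mr{Ber}(1/2)$ variables whose characteristic function is bounded via \cref{fact:estimate}. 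You instead factor the characteristic function directly over the independent $X_j$, invoke $2\pi$-periodicity to bound each factor by $\exp(-c_0\|\alpha_j\|_{\mb{R}/2\pi\mb{Z}}^2)$, and use pigeonhole on the identity $\alpha_j-2\alpha_{j+1}+\alpha_{j+2}=\Theta_3$ to force a large $\|\alpha\|$ in each disjoint triple. Your route is shorter and avoids the coupling construction entirely; the paper's coupling framework is more of a reusable template (it is replayed with different ``switches'' in \cref{lem:theta3-mid,lem:theta2-high,lem:theta2-mid}) and does not rely on the distributions being integer-valued to reduce modulo $2\pi$, though in this lemma that generality is not needed.
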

\begin{remark}
This estimate, as well as \cref{lem:theta3-mid}, is only needed to establish \cref{thm:ties}.
\end{remark}
\begin{proof}
Since the coefficient sequence $(c_j)_{1\le j\le n}$ is coarse, using \cref{S5} there exists a $4$-separated set of indices $J$ (i.e., the difference of distinct elements is at least $4$) such that $|J|=\Omega(n/\log n)$ and such that for $j\in J$ we have $c_j = c_{j+1} = c_{j+2}-1/2$. We now claim that
\begin{equation}\label{eq:rerandomize}
(X_1,X_2,X_3) \overset{d.}{=} (1-W)Z + W((1-R)(0,2,0) + R(1,0,1))
\end{equation}
where $R$, $W$, and $Z$ are independent random variables defined via $R=\mr{Ber}(1/2)$,
\begin{align*}
W &= \mr{Ber}\big(\min\{\mb{P}[(X_1,X_2,X_3) = (0,2,0)], \mb{P}[(X_1,X_2,X_3) = (1,0,1)]\}\big),\\
\mb{P}[Z = (k_1,k_2,k_3)] &= \frac{1}{\mb{E}[1-W]}\cdot\bigg(\mb{P}[(X_1,X_2,X_3) = (k_1,k_2,k_3)] - \mbm{1}_{(k_1,k_2,k_3)\in \{(0,2,0),(1,0,1)\}}\mb{E}W\bigg)
\end{align*}
for $(k_1,k_2,k_3)\in\mb{Z}_{\ge 0}^3$. Indeed, to see this let $2q=\min\{\mb{P}[(X_1,X_2,X_3) = (0,2,0)], \mb{P}[(X_1,X_2,X_3) = (1,0,1)]\}$ and consider the following procedure: sample $(X_1,X_2,X_3)$, but if either of the tuples $(x_1,x_2,x_3)\in\{(0,2,0),(1,0,1)\}$ is drawn then with probability $q/\mb{P}[(X_1,X_2,X_3)=(x_1,x_2,x_3)]$ enter a ``resampling phase'' where we with probability $1/2$ decide whether to output $(0,2,0)$ or $(1,0,1)$, overwriting the old value to produce a tuple $(X_1',X_2',X_3')$. (So, the ``resampling phase'' occurs with chance $2q$ by the law of total probability.) We see the distributional equality $(X_1',X_2',X_3')\overset{d.}=(X_1,X_2,X_3)$ by construction, but $(X_1',X_2',X_3')$ is easily seen to be captured by the formula \cref{eq:rerandomize}.

Note \cref{eq:rerandomize} holds even if we shift indices, so for each $j\in J$ we can write $(X_j,X_{j+1},X_{j+2}) = (1-W_j)Z_j + W_j((0,2,0) + R_j(1,-2, 1))$. Notice by the triangle inequality and independence that 
\begin{align*}
|\mb{E}\exp(i\vec{\Theta}\cdot (T_1,T_2,T_3))| &\le \Big|\mb{E}\Big[\exp\Big(i\sum_{j\in J+\{0,1,2\}}\Big(\Theta_1 + \Theta_2j + 2\Theta_3 c_j\Big)X_j\Big)\Big]\Big|\\
&\le\mb{E}_W\Big[\Big|\mb{E}\Big[\exp\Big(i\sum_{j\in J+\{0,1,2\}}\Big(\Theta_1 + \Theta_2j + 2\Theta_3 c_j\Big)X_j\Big)\Big|(W_j)_{j\in J}\Big]\Big|\Big]\\
&= \mb{E}_W\Big[\Big|\mb{E}\Big[\exp\Big(i\sum_{j\in J}\mbm{1}_{W_j = 1}\Theta_3R_j\Big)\Big|(W_j)_{j\in J}\Big]\Big|\Big]\\
&\le \mb{E}[\exp(-\Omega(\Theta_3^2)\sum_{j\in J}\mbm{1}_{W_j = 1})]\\
&\le n^{-\omega(1)}.
\end{align*}
The first and second lines follow from independence and the triangle inequality, the third follows from
\[(1,1,1)\cdot(1,-2,1) = 0,\quad (j,j+1,j+2)\cdot (1,-2,1)) = 0,\quad (c_j,c_{j+1},c_{j+2})\cdot(1,-2,1) = -1/2,\]
and the fourth follows from \cref{fact:estimate}. In the final line we have used independence and Bernstein's inequality, which implies that $\#\{j\in J\colon W_j=1\}=\sum_{j\in J}\mbm{1}_{W_j=1}\ge cn/\log n$ occurs with super-polynomially small probability for some small absolute constant $c>0$.
\end{proof}

We next handle the case of intermediate $|\Theta_3|$. The remaining unhandled range will be in some sense controllable by an appropriate central limit theorem.
\begin{lemma}\label{lem:theta3-mid}
Suppose that $\vec{\Theta} = (\Theta_1,\Theta_2,\Theta_3)$ is such that $n^{-1}(\log n)^3\le|\Theta_3|\le n^{-1/2}(\log n)^2$. Then given \cref{def:triple-var-setup} and that $(c_j)_{1\le j\le n}$ is coarse, we have
\[|\mb{E}\exp(i\vec{\Theta}\cdot(T_1,T_2,T_3))|\le n^{-\omega(1)}.\]
\end{lemma}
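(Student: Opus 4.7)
The plan is to follow the template of the proof of \cref{lem:theta3-high}, applying the three-point switching argument at a scale $y$ tuned to $|\Theta_3|$ and invoking \cref{S6} in place of \cref{S5}. The key re-randomization couples $(X_j, X_{j+y}, X_{j+2y})$ between the configurations $(0,2,0)$ and $(1,0,1)$; since $(1,-2,1)$ is orthogonal to both $(1,1,1)$ and $(j, j+y, j+2y)$, the resulting Bernoulli $R_j$ affects only $T_3$, contributing a factor of magnitude $|\cos(\Theta_3(c_j-2c_{j+y}+c_{j+2y})/2)|$.

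Concretely, I will select $y \in [n^{1/4}, n/(\log n)^2]$ with $|\Theta_3|\sqrt{y}(\log n)^2 \lesssim 1$, which by \cref{S3} keeps the Bernoulli twist in $[0, 3\pi/2]$, the range where \cref{fact:estimate} applies. By \cref{S6} there are at least $n/\log n$ candidate indices $j$ with $|c_j - 2c_{j+y} + c_{j+2y}| \ge \sqrt{y}$; selecting a single residue class modulo $2y+1$ extracts a subset $J'$ of size $\Omega(n/(y \log n))$ whose triples $(j, j+y, j+2y)$ are pairwise disjoint. On each such triple the coupling succeeds with probability bounded below by a positive constant, since $\min(\mb{P}[(0,2,0)], \mb{P}[(1,0,1)]) > 0$ for both $\mr{Geom}(1/2)$ and $\mr{Pois}(1)$. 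By independence across $J'$, \cref{fact:estimate}, and \cref{thm:bernstein}, the Fourier coefficient is bounded by $\exp(-\Omega(\Theta_3^2 n/\log n))$ plus a super-polynomially small term from the event that too few Bernoullis activate.

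The main obstacle is that $\Theta_3^2 n/\log n$ is super-polynomial only for $|\Theta_3| \gtrsim \log n/\sqrt{n}$, whereas the stated range extends as low as $|\Theta_3| = n^{-1}(\log n)^3$. For this lower regime I will instead apply \cref{fact:estimate} directly to each factor of the product
\[|\mb{E}\exp(i\vec\Theta\cdot(T_1,T_2,T_3))| = \prod_{j=1}^n|\mb{E}\exp(iX_j\alpha_j)|, \qquad \alpha_j := \Theta_1 + \Theta_2 j + 2\Theta_3 c_j,\]
valid since the $X_j$ are independent. Here $|\Theta_3 c_j| = O(|\Theta_3|\sqrt{n}\log n) = o(1)$ by \cref{S1}, so \cref{fact:estimate} yields $|\mb{E}\exp(iX_j\alpha_j)| \le \exp(-\Omega(\Theta_3^2 c_j^2))$ per factor; multiplying and using $\sum_j c_j^2 \gtrsim n^2/(\log n)^2$ from \cref{S4} (with \cref{S2} ruling out the constant component) gives a bound $\exp(-\Omega(\Theta_3^2 n^2/(\log n)^2))$, which is super-polynomial throughout this subrange. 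Splitting the middle range at a threshold such as $|\Theta_3| \asymp 1/(\sqrt{n}\log n)$ and applying these two arguments in their respective regimes—possibly combined with a van der Corput-type hybrid using \cref{S6} to anticoncentrate the exponential sum $\sum_j e^{2i\Theta_3 c_j}$ across any short intermediate band where both single approaches become delicate—yields the claim.
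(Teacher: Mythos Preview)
Your proposal has two genuine gaps.

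First, the ``direct factorization'' argument for small $|\Theta_3|$ is incorrect. You write $\alpha_j = \Theta_1 + \Theta_2 j + 2\Theta_3 c_j$ and then claim that because $|\Theta_3 c_j| = o(1)$, \cref{fact:estimate} gives $|\mb{E}\exp(iX_j\alpha_j)| \le \exp(-\Omega(\Theta_3^2 c_j^2))$. But \cref{fact:estimate} bounds the characteristic function in terms of $\alpha_j$, not $\Theta_3 c_j$; the lemma places no restriction on $\Theta_1,\Theta_2$, and the characteristic functions of $\mr{Geom}(1/2)$ and $\mr{Pois}(1)$ are $2\pi$-periodic. So $\Theta_1 + \Theta_2 j$ can push $\alpha_j$ near a nonzero multiple of $2\pi$ for many $j$, making the corresponding factors essentially $1$ and destroying the bound. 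The entire purpose of the $(1,-2,1)$ switching is precisely to eliminate $\Theta_1$ and $\Theta_2$ from the analysis; you cannot bypass it this way. The vague ``van der Corput-type hybrid'' does not fix this.

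Second, in the switching argument you are far too wasteful in thinning the index set. Restricting to a single residue class modulo $2y+1$ drops $|J'|$ to $\Omega(n/(y\log n))$, so the resulting exponent $\Theta_3^2 y \cdot |J'| = \Omega(\Theta_3^2 n/\log n)$ is independent of $y$ and insufficient for $|\Theta_3|$ near $n^{-1}(\log n)^3$. The paper does not do this: starting from the $\ge n/\log n$ indices promised by \cref{S6}, one only needs to forbid the four differences $\pm y,\pm 2y$ within $J$, so a greedy (or Tur\'an-type) selection retains a subset of size $\Omega(n/\log n)$ with $J,J+y,J+2y$ pairwise disjoint. With this larger $J$, the decay is $\exp(-\Omega(\Theta_3^2 y\cdot n/\log n))$, and one can choose an integer $y \in [n^{1/4}, n/(\log n)^2]$ satisfying both $\sqrt{y}(\log n)^2|\Theta_3| \le 1$ (so \cref{fact:estimate} applies) and $\Theta_3^2 y \cdot n/\log n \ge (\log n)^2$; the given bounds on $|\Theta_3|$ make this window nonempty. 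Thus a single switching argument handles the whole range and no second regime is needed.
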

\begin{proof}
Let $y\in[n^{1/4},n/(\log n)^2]$ be an integer to be chosen later based on $n,|\Theta_3|$. Since $(c_j)_{1\le j\le n}$ is a coarse sequence, by \cref{S6} there exists a set of indices $J$ of size $\Omega(n/\log n)$ such that the sets $J, J+y, J+2y$ are disjoint and for each $j\in J$ we have $\sqrt{y}\le|c_j-2c_{j+y}+c_{j+2y}|\le 2\sqrt{y}(\log n)^2$ (the second inequality follows from two applications of \cref{S3}). Therefore proceeding in an essentially identical manner to \cref{lem:theta3-high} (in particular writing $(X_j,X_{j+y},X_{j+2y})=(1-W_j)Z_j+W_j((0,2,0)+R_j(1,-2,1))$ for $j\in J$ similar to the proof of the previous lemma), we have that
\begin{align*}
|\mb{E}[\exp(i\vec{\Theta}\cdot (T_1,T_2,T_3))]| &\le \Big|\mb{E}\Big[\exp\Big(i\sum_{j\in J+\{0,y,2y\}}\Big(\Theta_1 + \Theta_2j + 2\Theta_3 c_j\Big)X_j\Big)\Big]\Big|\\
&\le \mb{E}\Big[\Big|\mb{E}\Big[\exp\Big(i\sum_{j\in J+\{0,1,2\}}\Big(\Theta_1 + \Theta_2j + 2\Theta_3 c_j\Big)X_j\Big)\Big|(W_j)_{j\in J}\Big]\Big|\Big]\\
&= \mb{E}\Big[\Big|\mb{E}\Big[\exp\Big(i\sum_{j\in J}\mbm{1}_{W_j = 1}\Theta_3R_j(c_j-2c_{j+y}+c_{j+2y})\Big)\Big|(W_j)_{j\in J}\Big]\Big|\Big]\\
&\le \mb{E}[\exp(-\Omega(\Theta_3^2y)\sum_{j\in J}\mbm{1}_{W_j = 1})]\\
&\le n^{-\omega(1)}.
\end{align*}
The reasoning is essentially identical to that in the proof of \cref{lem:theta3-high}. We need that $|\Theta_3(c_j-2c_{j+y}+c_{j+2y})|\le \sqrt{y}(\log n)^2|\Theta_3|\le 1$ in order to apply \cref{fact:estimate}. If we additionally have that $\Theta_3^2y\cdot n/(\log n)\ge (\log n)^2$, then using $|J|=\Omega(n/\log n)$ we can conclude the final estimate in a similar manner to the proof of \cref{lem:theta3-high}. Thus it suffices to choose an integer $y$ satisfying
\[(\log n)^3/(n\Theta_3^2)\le y\le 1/((\log n)^2\Theta_3^2)\]
and $y\in[n^{1/4},n/(\log n)^2]$. This clearly exists by the given bounds for $|\Theta_3|$.
\end{proof}

We now prove a similar estimate for the case where $|\Theta_2|$ is near the maximum size. The proof is once again rather similar, but in this case we only need to consider consecutive pairs of indices $(j,j+1)$ in order to extract the necessary effect.
\begin{lemma}\label{lem:theta2-high}
Suppose that $\vec{\Theta} = (\Theta_1,\Theta_2,\Theta_3)$ is such that $n^{-1/2}\log n\le |\Theta_2|\le\pi$, $|\Theta_3|\le n^{-1}(\log n)^3$ and $(c_j)_{1\le j\le n}$ satisfies \cref{S1,S3}. Then given \cref{def:triple-var-setup}, we have
\[|\mb{E}\exp(i\vec{\Theta}\cdot (T_1,T_2,T_3))|\le n^{-\omega(1)}.\]
\end{lemma}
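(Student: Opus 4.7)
The plan is to mimic the switching argument of \cref{lem:theta3-high,lem:theta3-mid}, but applied to consecutive pairs of indices so that the Bernoulli randomness we extract couples to $\Theta_2$ rather than $\Theta_3$. Set $J = \{1, 3, 5, \ldots\}$ so that the pairs $\{j, j+1\}$ for $j \in J$ are disjoint and $|J| = \Omega(n)$. For each $j \in J$ both outcomes $(X_j, X_{j+1}) = (0,1)$ and $(X_j, X_{j+1}) = (1,0)$ have probability bounded below by an absolute positive constant (true for $\Delta = \mr{Geom}(1/2)$ and $\Delta = \mr{Pois}(1)$ alike). By the resampling construction of \cref{lem:theta3-high}, we can therefore write
\[
(X_j, X_{j+1}) \overset{d.}{=} (1 - W_j) Z_j + W_j\bigl((0,1) + R_j(1,-1)\bigr),
\]
with $W_j \sim \mr{Ber}(p)$ for some absolute $p > 0$, $R_j \sim \mr{Ber}(1/2)$, and $Z_j$ some auxiliary triple of non-negative integers, all mutually independent across $j \in J$.

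Conditioning on $(W_j, Z_j)_{j \in J}$ and applying the triangle inequality splits the conditional Fourier coefficient into a product of independent $R_j$-expectations. The switch vector $(1,-1)$ annihilates the $\Theta_1$ direction, so whenever $W_j = 1$ the $R_j$-factor has the form $\mb{E}[\exp(i R_j v_j)]$ with
\[
v_j = -\Theta_2 + 2\Theta_3(c_j - c_{j+1}).
\]
By property \cref{S3} we have $|c_j - c_{j+1}| \le (\log n)^2$, so combined with $|\Theta_3| \le n^{-1}(\log n)^3$ this gives $|2\Theta_3(c_j - c_{j+1})| \le 2n^{-1}(\log n)^5 = o(n^{-1/2}\log n) \le |\Theta_2|/2$ for large $n$. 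Hence $|\Theta_2|/2 \le |v_j| \le 3\pi/2$, and \cref{fact:estimate} yields $|\mb{E}_{R_j}[\exp(iR_j v_j)]| \le \exp(-c_0 \Theta_2^2)$ for some absolute $c_0 > 0$.

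Multiplying these factors and taking outer expectation,
\[
|\mb{E}\exp(i\vec{\Theta}\cdot(T_1,T_2,T_3))| \le \mb{E}\bigl[\exp(-c_0\Theta_2^2 \cdot N)\bigr], \qquad N := \#\{j \in J : W_j = 1\}.
\]
Since $N$ is a sum of i.i.d.\ $\mr{Ber}(p)$ variables with mean $p|J| = \Omega(n)$, Bernstein's inequality (\cref{thm:bernstein}) gives $\mb{P}[N \le p|J|/2] \le \exp(-\Omega(n))$. Using the hypothesis $\Theta_2^2 \ge n^{-1}(\log n)^2$ on the complement, we obtain a final bound $\exp(-\Omega((\log n)^2)) + \exp(-\Omega(n)) = n^{-\omega(1)}$, which is the claim.

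There is no serious obstacle here: the strategy is a direct specialization of the one in \cref{lem:theta3-high}, with pairs of adjacent indices replacing the triples/pairs used there. The only point requiring care is checking that the effective coefficient $v_j$ cannot be killed by cancellation between the $\Theta_2$ and $\Theta_3$ contributions, which is what forces the quantitative gap between the ranges of $|\Theta_2|$ and $|\Theta_3|$ assumed in the hypotheses together with the Lipschitz bound \cref{S3}. Condition \cref{S1} plays no direct role in this estimate and is presumably included only for stylistic uniformity with the other Fourier lemmas of this section.
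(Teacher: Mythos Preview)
Your proof is correct and essentially identical to the paper's own argument: both extract Bernoulli randomness from adjacent pairs $(X_j,X_{j+1})$ via the resampling of \cref{lem:theta3-high}, observe that the switch $(1,-1)$ kills $\Theta_1$ and leaves a phase dominated by $\Theta_2$ thanks to \cref{S3}, and then conclude via \cref{fact:estimate} and a Bernstein bound on the number of activated pairs. One trivial slip: you call $Z_j$ an ``auxiliary triple'' when it is a pair here; and your closing remark that \cref{S1} is unused in this particular lemma is correct.
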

\begin{remark}
\cref{lem:theta2-high,lem:theta2-mid,lem:theta1-region} are needed for both \cref{thm:main,thm:ties}. Note that these lemmas do not need an assumption on coarseness of $(c_j)_{1\le j\le n}$.
\end{remark}
\begin{proof}
Since the coefficient sequence $(c_j)_{1\le j\le n}$ satisfies \cref{S3} we have $|c_j-c_{j+1}|\le(\log n)^2$. Let $J\subseteq[n]$ be a $2$-separated set of indices of size $\Omega(n)$. Furthermore note that $(X_1,X_2) \overset{d.}{=} (1-W)Z + W((1,0) + R(-1,1))$ where $R,W,Z$ are independent random variables with $R=\mr{Ber}(1/2)$, $W = \mr{Ber}(\mb{P}[(X_1,X_2)=(1,0)])$, and
\[\mb{P}[Z = (k_1,k_2)] = \frac{1}{\mb{E}[1-W]}\cdot\bigg(\mb{P}[(X_1,X_2,X_3) = (k_1,k_2)] - \mbm{1}_{(k_1,k_2)\in\{(0,1),(1,0)\}}\mb{E}W\bigg)\]
for $(k_1,k_2)\in\mb{Z}_{\ge 0}^2$, similar to as in the proof of \cref{lem:theta3-high}.

Now for each index in $J$, we write $(X_j,X_{j+1}) = (1-W_j)Z_j + W_j((1,0) + R_j(1,-1))$. Notice by the triangle inequality and independence that 
\begin{align*}
|\mb{E}\exp(i\vec{\Theta}\cdot (T_1,T_2,T_3))| &\le \Big|\mb{E}\Big[\exp\Big(i\sum_{j\in J+\{0,1\}}\Big(\Theta_1 + \Theta_2j + 2\Theta_3 c_j\Big)X_j\Big)\Big]\Big|\\
&\le \mb{E}\Big[\Big|\mb{E}\Big[\exp\Big(i\sum_{j\in J+\{0,1\}}\Big(\Theta_1 + \Theta_2j + 2\Theta_3 c_j\Big)X_j\Big)\Big|(W_j)_{j\in J}\Big]\Big|\Big]\\
&= \mb{E}\Big[\Big|\mb{E}\Big[\exp\Big(i\sum_{j\in J}\mbm{1}_{W_j = 1}(\Theta_2+2(c_{j+1}-c_j)\Theta_3)R_j\Big)\Big|(W_j)_{j\in J}\Big]\Big|\Big]\\
&\le \mb{E}[\exp(-\Omega(\Theta_2^2)\sum_{j\in J}\mbm{1}_{W_j = 1})]\\
&\le n^{-\omega(1)}.
\end{align*}
The first and second line follows from triangle inequality, the third follows from $(1,1)\cdot(1,-1) = 0$, $(j,j+1)\cdot (1,-1)) = -1$, and $(c_j,c_{j+1})\cdot(1,-1) = c_{j+1}-c_j$ for $j\in J$, and the fourth from \cref{fact:estimate} as well as $2|c_{j+1}-c_j||\Theta_3|\le 2n^{-1}(\log n)^5\le |\Theta_2|/2$. In the final line we have once again used Bernstein's inequality.
\end{proof}

We next handle intermediate $|\Theta_2|$. In the remaining range central limit theorem type estimates become effective.
\begin{lemma}\label{lem:theta2-mid}
Suppose that $\vec{\Theta} = (\Theta_1,\Theta_2,\Theta_3)$ is such that $n^{-3/2}(\log n)^6\le |\Theta_2|\le n^{-1/2}\log n$, $|\Theta_3|\le n^{-1}(\log n)^3$ and $(c_j)_{1\le j\le n}$ satisfies \cref{S1,S3}. Then given \cref{def:triple-var-setup}, we have 
\[|\mb{E}\exp(i\vec{\Theta}\cdot (T_1,T_2,T_3))|\le n^{-\omega(1)}.\]
\end{lemma}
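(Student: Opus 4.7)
The plan is to step away from the switching framework used in the preceding lemmas and instead work directly with the product factorization. (For $|\Theta_2|$ near the lower end of the hypothesized range, the switching approach at any distance $y$ yields cancellation of order at most $\exp(-\Omega(ny\Theta_2^2))$, and pushing $y$ up to $\pi/|\Theta_2|$ gives only $\exp(-\Omega(n|\Theta_2|)) = \exp(-\Omega(n^{-1/2}(\log n)^6))$, which is not super-polynomial.) Writing
\[|\mathbb{E}\exp(i\vec{\Theta} \cdot (T_1, T_2, T_3))| = \prod_{j=1}^n |\mathbb{E}\exp(iX_j \Phi_j)|, \qquad \Phi_j := \Theta_1 + j\Theta_2 + 2c_j\Theta_3,\]
and invoking \cref{fact:estimate} together with the $2\pi$-periodicity of the characteristic function of the integer-valued $X_j$, each factor is at most $\exp(-c\operatorname{dist}(\Phi_j,2\pi\mathbb{Z})^2)$ for an absolute constant $c>0$. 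The task then reduces to showing $\sum_j\operatorname{dist}(\Phi_j,2\pi\mathbb{Z})^2 = \omega(\log n)$, for which I split into two cases based on whether the (approximately) arithmetic progression $\{\Phi_j\}$ wraps around $\mathbb{R}/2\pi\mathbb{Z}$.

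In the \emph{no-wrap} regime $n|\Theta_2| \le 2\pi - 1$, the $\Phi_j$'s lie in an interval of length at most $n|\Theta_2| + 4n^{-1/2}(\log n)^4 < 2\pi$ for $n$ large (using \cref{S1} to bound $|c_j| \le \sqrt n\log n$ and hence the $\Theta_3$-contribution to the spread), so a single integer shift by a multiple of $2\pi$ realizes all the distances simultaneously and $\sum\operatorname{dist}(\Phi_j,2\pi\mathbb{Z})^2 \ge \operatorname{SSD}_j(\Phi_j) = \operatorname{SSD}_j(u_j)$ where $u_j := j\Theta_2 + 2c_j\Theta_3$ and $\operatorname{SSD}$ denotes the sum of squared deviations from the mean. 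Orthogonally decomposing $c_j - \bar c = \hat a(j - \tfrac{n+1}{2}) + \hat c_j$ with $\sum(j-\tfrac{n+1}{2})\hat c_j = 0$, this sum of squared deviations equals $A(\Theta_2 + 2\hat a\Theta_3)^2 + 4D\Theta_3^2$ with $A = n(n^2-1)/12$ and $D = \sum \hat c_j^2 \ge 0$. By Cauchy--Schwarz together with \cref{S1}, $|\hat a| = O(\log n/\sqrt n)$, so $|2\hat a\Theta_3| = O(n^{-3/2}(\log n)^4)$, which is $\le |\Theta_2|/2$ for $n$ large thanks to the $(\log n)^6$ factor in the hypothesis on $|\Theta_2|$. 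Therefore $\operatorname{SSD}(u_j) \ge A|\Theta_2|^2/4 \gtrsim n^3|\Theta_2|^2 \ge (\log n)^{12}/96$.

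In the \emph{wrap} regime $n|\Theta_2| > 2\pi - 1$, I use the pointwise bound $\operatorname{dist}(\Phi,2\pi\mathbb{Z})^2 \ge 2(1-\cos\Phi)$ (immediate from $|\sin(d/2)| \le d/2$ applied to the reduced representative) to reduce to bounding the geometric sum $|\sum_j e^{i\Phi_j}|$. The unperturbed arithmetic-progression part satisfies $|\sum_j e^{i(\Theta_1 + j\Theta_2)}| \le 1/|\sin(\Theta_2/2)| \le \pi/|\Theta_2| \le \pi n/(2\pi - 1) < 0.6n$, while the $\Theta_3$-perturbation contributes at most $2|\Theta_3|\sum_j|c_j| \le 2n^{1/2}(\log n)^4 = o(n)$ using $\sum|c_j| \le n^{3/2}\log n$ (Cauchy--Schwarz with \cref{S1}). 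Combining, $\sum\operatorname{dist}(\Phi_j,2\pi\mathbb{Z})^2 \ge 2(0.4 - o(1))n \ge 0.7n$. In both regimes the lower bound is $\omega(\log n)$, yielding $|\mathbb{E}\exp(i\vec{\Theta}\cdot(T_1,T_2,T_3))| \le n^{-\omega(1)}$. The principal obstacle is verifying the inequality $|\Theta_2 + 2\hat a\Theta_3| \ge |\Theta_2|/2$ in the no-wrap case, which hinges on the quantitative gap between $|\Theta_2|$ and $|\hat a||\Theta_3|$ that the $(\log n)^6$ and $(\log n)^3$ exponents of the hypotheses are precisely calibrated to guarantee.
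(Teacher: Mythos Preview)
Your direct-factorization approach is genuinely different from the paper's, which instead uses a switching argument on reflected pairs $(X_j,X_{n-j})$ with $j$ ranging over an interval of length $\asymp y$ near $n/2$, extracting cancellation $\exp(-\Omega(y^3\Theta_2^2))$ and then choosing $y$ suitably. (Your parenthetical dismissal of switching misidentifies the exponent; the paper's scheme does give a super-polynomial bound.) The wrap case of your argument is correct, and the $\operatorname{SSD}$ computation and the estimate $|2\hat a\Theta_3|\le|\Theta_2|/2$ are both fine.

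There is, however, a real gap in the no-wrap case. You assert that because the $\Phi_j$ lie in an interval of length $<2\pi$, ``a single integer shift by a multiple of $2\pi$ realizes all the distances simultaneously''. This is false: take $\Theta_3=0$, $n|\Theta_2|=2\pi-2$, and $\Theta_1$ chosen so that $\Phi_1\approx 1/2$ and $\Phi_n\approx 2\pi-3/2$; then $\Phi_1$ is nearest to $0$ while $\Phi_n$ is nearest to $2\pi$. In such configurations $\sum_j\operatorname{dist}(\Phi_j,2\pi\mathbb{Z})^2$ can be far smaller than $\operatorname{SSD}(\Phi_j)$ (for two points at $\epsilon$ and $2\pi-1+\epsilon$ the ratio is about $(2\pi-1)^{-2}$), so the deduction $\sum\operatorname{dist}^2\ge\operatorname{SSD}$ does not follow from spread $<2\pi$ alone.

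The repair is minor. Move the threshold to $n|\Theta_2|\le\pi/2$, so the spread is $<\pi$, and replace the ``single shift'' step by the pairwise estimate
\[
\sum_j\operatorname{dist}(\Phi_j,2\pi\mathbb{Z})^2\;\ge\;\frac{1}{4n}\sum_{j,k}\operatorname{dist}(\Phi_j-\Phi_k,2\pi\mathbb{Z})^2\;=\;\frac{1}{4n}\sum_{j,k}|\Phi_j-\Phi_k|^2\;=\;\tfrac{1}{2}\operatorname{SSD}(\Phi_j),
\]
the first inequality coming from the triangle inequality for $\operatorname{dist}(\cdot,2\pi\mathbb{Z})$ and the middle equality from $|\Phi_j-\Phi_k|<\pi$. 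On the wrap side ($n|\Theta_2|>\pi/2$), use the sharper Dirichlet-kernel bound $\big|\sum_j e^{ij\Theta_2}\big|=n\,|\operatorname{sinc}(n\Theta_2/2)|/|\operatorname{sinc}(\Theta_2/2)|\le n\cdot\operatorname{sinc}(\pi/4)/(1-o(1))<0.91n$, which still gives $\sum\operatorname{dist}^2\ge 0.18n$. With this adjustment your route goes through and yields a pleasant alternative to the paper's switching proof.
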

\begin{proof}
Let $1\le y\le n/8$ be an integer to be chosen later based on $n,|\Theta_2|$. Consider $J = \{\lfloor n/2\rfloor-2y,\lfloor n/2\rfloor-2y + 1,\ldots,\lfloor n/2\rfloor-y\}$. We have
\[|\Theta_3||c_j-c_{n-j}|\lesssim\sqrt{y}(\log n)^5/n,\quad|\Theta_2||n-2j| \asymp y|\Theta_2|\]
for all $j\in J$. We ensure that $y$ is chosen so that $\sqrt{y}(\log n)^5/n\le cy|\Theta_2|$ for an appropriately small absolute constant $c>0$ and so that $y|\Theta_2|\le c$ as well. We also guarantee $y\ge(\log n)^2$.

We now write $(X_j,X_{n-j}) = (1-W_j)Z_j + W_j((1,0) + R_j(-1,1))$ for $j\in J$ in a similar manner to the proof of \cref{lem:theta2-high}, and find
\begin{align*}
|\mb{E}\exp(i\vec{\Theta}\cdot (T_1,T_2,T_3))| &\le \Big|\mb{E}\Big[\exp\Big(i\sum_{j\in J\cup (n-J)}\Big(\Theta_1 + \Theta_2j + 2\Theta_3 c_j\Big)X_j\Big)\Big]\Big|\\
&\le \mb{E}\Big[\Big|\mb{E}\Big[\exp\Big(i\sum_{j\in J\cup (n-J)}\Big(\Theta_1 + \Theta_2j + 2\Theta_3 c_j\Big)X_j\Big)\Big|(W_j)_{j\in J}\Big]\Big|\Big]\\
&= \mb{E}\Big[\Big|\mb{E}\Big[\exp\Big(i\sum_{j\in J}\mbm{1}_{W_j = 1}((n-2j)\Theta_2 + 2(c_{n-j}-c_j)\Theta_3)R_j\Big)\Big|(W_j)_{j\in J}\Big]\Big|\Big]\\
&\le \mb{E}[\exp(-\Omega(y^2\Theta_2^2)\sum_{j\in J}\mbm{1}_{W_j = 1})]\\
&\le n^{-\omega(1)}.
\end{align*}
We used that $(n-2j)\Theta_2$ dominates $2(c_{n-j}-c_j)\Theta_3$ in the second last line, as well as $2|(n-2j)\Theta_2|\le 1$ to apply \cref{fact:estimate}. For the last line, we note that $\sum_{j\in J}\mbm{1}_{W_j = 1}\gtrsim y$ occurs with super-polynomially good probability (since $y\ge(\log n)^2$) and we are using the estimate $y^3\Theta_2^2\ge (\log n)^2$.

To finish the proof, we check that it is possible to choose integer $1\le y\le n/8$ with $y\ge(\log n)^{2/3}\Theta_2^{-2/3}$ and $y\ge(\log n)^2$ as well as $y\le c|\Theta_2|^{-1}$ and $y\ge c^{-2}(\log n)^{10}/(n^2\Theta_2^2)$. The bounds on $|\Theta_2|$ easily imply this is possible.
\end{proof}

We now are finally in position to handle the cases where $|\Theta_1|$ is large. The remaining region will be handled by central limit theorem style techniques.
\begin{lemma}\label{lem:theta1-region}
Suppose that $\vec{\Theta} = (\Theta_1,\Theta_2,\Theta_3)$ is such that $n^{-1/2}(\log n)^7\le |\Theta_1|\le5\pi/4$, $|\Theta_2|\le n^{-3/2}(\log n)^6$, and $|\Theta_3|\le n^{-1}(\log n)^3$ and $(c_j)_{1\le j\le n}$ satisfies \cref{S1,S3}. Then given \cref{def:triple-var-setup}, we have
\[|\mb{E}\exp(i\vec{\Theta}\cdot (T_1,T_2,T_3))|\le n^{-\omega(1)}.\]
\end{lemma}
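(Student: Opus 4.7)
The plan is to follow the switching template of \cref{lem:theta3-high,lem:theta3-mid,lem:theta2-high,lem:theta2-mid}, but now using a \emph{single-coordinate} switch, since we wish to isolate $\Theta_1$ rather than $\Theta_2$ or $\Theta_3$. For each $j\in[n]$ I would write
\[X_j \overset{d.}{=} (1-W_j)Z_j + W_j R_j,\]
where $W_j\sim\mr{Ber}(2q)$, $R_j\sim\mr{Ber}(1/2)$, and $Z_j$ is an appropriate remainder variable supported in $\mb{Z}_{\ge 0}$, with $2q:=\min\{\mb{P}[X_j=0],\mb{P}[X_j=1]\}$. Both of $\mr{Geom}(1/2)$ and $\mr{Pois}(1)$ place constant positive mass on $\{0,1\}$, so $q$ is a universal positive constant, and the $W_j,R_j,Z_j$ are independent across $j$.

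Conditioning on $(W_j)_{j\in[n]}$ and using independence with the triangle inequality yields
\[|\mb{E}\exp(i\vec{\Theta}\cdot(T_1,T_2,T_3))|\le\mb{E}_W\prod_{j\colon W_j=1}\bigl|\mb{E}[\exp(i(\Theta_1+\Theta_2 j+2\Theta_3 c_j)R_j)]\bigr|,\]
since the factors with $W_j=0$ have modulus at most $1$. From \cref{S1} and the assumed size ranges on $\Theta_2,\Theta_3$,
\[|\Theta_2 j + 2\Theta_3 c_j|\le n\cdot n^{-3/2}(\log n)^6 + 2\sqrt{n}(\log n)\cdot n^{-1}(\log n)^3\le 3n^{-1/2}(\log n)^6,\]
which for large $n$ is dominated by $|\Theta_1|\ge n^{-1/2}(\log n)^7$. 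Consequently $|\Theta_1+\Theta_2 j+2\Theta_3 c_j|\le 5\pi/4+o(1)\le 3\pi/2$ and $(\Theta_1+\Theta_2 j+2\Theta_3 c_j)^2\ge\Theta_1^2/4$. Applying \cref{fact:estimate} bounds each factor in the product by $\exp(-c_{\ref{fact:estimate}}\Theta_1^2/4)$.

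Finally, Bernstein's inequality (\cref{thm:bernstein}) applied to the i.i.d.\ Bernoulli variables $(W_j)_{j\in[n]}$ gives $\#\{j\colon W_j=1\}\ge qn$ except with probability $\exp(-\Omega(n))$, so
\[|\mb{E}\exp(i\vec{\Theta}\cdot(T_1,T_2,T_3))|\le\exp(-\Omega(n\Theta_1^2))+\exp(-\Omega(n))\le\exp(-\Omega((\log n)^{14}))=n^{-\omega(1)}\]
using $\Theta_1^2\ge n^{-1}(\log n)^{14}$. The whole argument is quite short once the template of the earlier lemmas is in hand; the only substantive point — and the only place I would expect a reader to pause — is verifying that the $(\log n)^7$ versus $(\log n)^6$ gap in the hypotheses is precisely what is needed for $\Theta_1$ to dominate $\Theta_2 j+2\Theta_3 c_j$ uniformly in $j\in[n]$, so that the Bernoulli-switch factor is genuinely controlled by $\Theta_1^2$.
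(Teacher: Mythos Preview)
Your proof is correct. The paper's own argument is even shorter: since \cref{fact:estimate} already applies to $X_j\sim\mr{Geom}(1/2)$ or $\mr{Pois}(1)$ directly (not just to Bernoullis), one can skip the $W_j,R_j,Z_j$ decomposition entirely and write
\[|\mb{E}\exp(i\vec{\Theta}\cdot(T_1,T_2,T_3))|=\prod_{j=1}^n|\mb{E}\exp(i(\Theta_1+\Theta_2 j+2\Theta_3 c_j)X_j)|\le\exp(-\Omega(n\Theta_1^2))\le n^{-\omega(1)},\]
invoking \cref{fact:estimate} on each factor after observing (exactly as you do) that $\Theta_1$ dominates $\Theta_2 j+2\Theta_3 c_j$. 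Your switching version is a faithful instance of the template from the earlier lemmas and costs only an extra Bernstein step; the paper's version exploits the fact that in this particular regime no coordinate-pairing is needed to isolate $\Theta_1$, so the raw product already suffices.
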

\begin{proof}
Note that $|\Theta_2|n + |\Theta_3|\max_{1\le j\le n}|c_j|\le2(\log n)^6n^{-1/2}$. Therefore we have
\begin{align*}
|\mb{E}[\exp(\vec{\Theta}\cdot (T_1,T_2,T_3))]| &=\Big|\mb{E}\Big[\exp\Big(i\sum_{j=1}^n\Big(\Theta_1 + \Theta_2j + 2\Theta_3 c_j\Big)X_j\Big)\Big]\Big|\\
&\le \mb{E}[\exp(-\Omega(n\Theta_3^2))]\le n^{-\omega(1)}.
\end{align*}
where we have simply noted that $\Theta_1$ dominates $\Theta_2j+2\Theta_3c_j$ and applied \cref{fact:estimate}.
\end{proof}

We now prove the desired estimate for the region which is approximately within the region which is controlled via a central limit theorem. For completeness we provide a short proof via an argument closely related to the Lindeberg exchange method \cite{Lin22} and the proof of the Gaussian invariance principle \cite{MOO10} (see \cref{thm:invariance}). This will help us reduce computing the necessary integrals to a purely Gaussian integration problem.
\begin{lemma}\label{lem:clt-region}
Suppose that $\vec{\Theta} = (\Theta_1,\Theta_2,\Theta_3)$ is such that $|\Theta_1|\le n^{-1/2}(\log n)^7$, $|\Theta_2|\le n^{-3/2}(\log n)^6$,  $|\Theta_3|\le n^{-1}(\log n)^3$, and $(c_j)_{1\le j\le n}$ satisfies \cref{S1,S2}. Given \cref{def:triple-var-setup}, we further define 
\[T_1' = \sum_{j=1}^nX_j',\qquad T_2' = \sum_{j=1}^njX_j',\qquad T_3' = 2\sum_{j=1}^nc_jX_j'\]
where we independently sample $X_j'\sim\mc{N}(0,\mr{Var}[\Delta])$. Then we have 
\[|\mb{E}[\exp(i\vec{\Theta}\cdot (T_1,T_2,T_3))] - \mb{E}[\exp(i\vec{\Theta}\cdot (T_1',T_2',T_3'))]|\lesssim n^{-1/2}(\log n)^{21}.\]
\end{lemma}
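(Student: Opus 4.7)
The plan is a straightforward Lindeberg exchange argument. First I would recenter the variables. Set $Y_j = X_j - \mb{E}[X_j] = X_j - 1$ so that $\mb{E}[Y_j]=0$ and $\mb{E}[Y_j^2] = \mr{Var}[\Delta]$, and note that $Y_j$ has uniformly bounded $L^3$ moment $M := \mb{E}[|Y_j|^3] = O(1)$ in either case. Using \cref{S2} to kill the constant shift in $T_3$, we can rewrite
\[\vec{\Theta}\cdot(T_1,T_2,T_3) = \sum_{j=1}^n a_j Y_j,\qquad a_j := \Theta_1 + \Theta_2 j + 2\Theta_3 c_j,\]
and analogously $\vec{\Theta}\cdot(T_1',T_2',T_3') = \sum_{j=1}^n a_j X_j'$. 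Using the three hypotheses on $\vec\Theta$ together with \cref{S1}, I would then bound
\[|a_j|\le|\Theta_1|+n|\Theta_2|+2|\Theta_3|\max_j|c_j|\le n^{-1/2}(\log n)^7+n^{-1/2}(\log n)^6+2n^{-1/2}(\log n)^4\lesssim n^{-1/2}(\log n)^7.\]

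Next I would run a standard Lindeberg swap, replacing $Y_j$ by $X_j'$ one coordinate at a time. For $0\le k\le n$ let $Z^{(k)}$ be the hybrid vector whose first $k$ coordinates are $Y$'s and remaining coordinates are $X'$'s, and write $W_k := \sum_{j\ne k} a_j Z_j^{(k)}$, which is independent of both $Y_k$ and $X_k'$. Then by independence
\[\mb{E}[\exp(i\sum_j a_j Z_j^{(k)})] - \mb{E}[\exp(i\sum_j a_j Z_j^{(k-1)})] = \mb{E}[\exp(iW_k)]\cdot\big(\mb{E}[\exp(ia_k Y_k)]-\mb{E}[\exp(ia_k X_k')]\big).\]
Taylor expanding $\exp(ix) = 1 + ix - x^2/2 + O(|x|^3)$, matching of first and second moments ($\mb{E}[Y_k] = \mb{E}[X_k'] = 0$, $\mb{E}[Y_k^2] = \mb{E}[(X_k')^2] = \mr{Var}[\Delta]$) kills the first two terms, leaving
\[\big|\mb{E}[\exp(ia_k Y_k)]-\mb{E}[\exp(ia_k X_k')]\big|\lesssim |a_k|^3\big(\mb{E}[|Y_k|^3]+\mb{E}[|X_k'|^3]\big)\lesssim |a_k|^3.\]
(Equivalently, one could directly invoke \cref{thm:invariance} with $\psi(x)=\exp(ix)$, which has bounded third derivative, after renormalizing $y_i := Y_i/\sqrt{\mr{Var}[\Delta]}$ and absorbing the scaling into the coefficients; the influences $\mr{Inf}_t$ of a linear form are exactly $a_t^2/\mr{Var}[\Delta]$.)

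Summing the telescoping difference over $k=1,\ldots,n$ and using $|\mb{E}[\exp(iW_k)]|\le 1$ yields
\[|\mb{E}\exp(i\vec{\Theta}\cdot(T_1,T_2,T_3)) - \mb{E}\exp(i\vec{\Theta}\cdot(T_1',T_2',T_3'))|\lesssim\sum_{j=1}^n|a_j|^3\lesssim n\cdot\big(n^{-1/2}(\log n)^7\big)^3 = n^{-1/2}(\log n)^{21},\]
which is the claimed bound. There is no serious obstacle here: the only thing to watch is that the centering constants from \cref{def:triple-var-setup} disappear (via \cref{S2} for $T_3$ and by direct subtraction for $T_1,T_2$) so that the Lindeberg swap applies to a genuinely mean-zero linear statistic; everything else is moment matching at orders $0,1,2$ followed by a cubic remainder.
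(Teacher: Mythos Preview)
Your proof is correct and follows essentially the same Lindeberg exchange argument as the paper: both center the variables, swap one coordinate at a time, match moments through order two, and bound the cubic remainder using $|a_j|\lesssim n^{-1/2}(\log n)^7$ to arrive at $\sum_j|a_j|^3\lesssim n^{-1/2}(\log n)^{21}$. The only cosmetic difference is that the paper writes out the hybrid expectation explicitly in one chain of inequalities rather than introducing the notation $W_k$ and $Z^{(k)}$.
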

\begin{remark}
If $\Delta=\mr{Geom}(1/2)$ then $(\mb{E}[\Delta],\mr{Var}[\Delta])=(1,2)$, and if $\Delta=\mr{Pois}(1)$ then $(\mb{E}[\Delta],\mr{Var}[\Delta])=(1,1)$.
\end{remark}
\begin{proof}
Notice that by iteratively replacing $X_i$ by $X_i'$ and applying the triangle inequality we have
\begin{align*}
&|\mb{E}\exp(i\vec{\Theta}\cdot (T_1,T_2,T_3))-\mb{E}\exp(i\vec{\Theta}\cdot (T_1',T_2',T_3'))|\\
&\le\sum_{j=1}^n\bigg|\bigg(\mb{E}\exp(i\vec{\Theta}\cdot (1,j,2c_j)(X_j-1))-\mb{E}\exp(i\vec{\Theta}\cdot (1,j,2c_j)X_j')\bigg)\\
&\qquad\qquad\times\mb{E}\exp\bigg(i\vec{\Theta}\cdot\bigg(\sum_{1\le j'<j}(1,j',2c_{j'})X_{j'}'+\sum_{j<j'\le n}(1,j',2c_{j'})(X_{j'}-1)\bigg)\bigg)\bigg|\\
&\le\sum_{j=1}^n|\mb{E}\exp(i\vec{\Theta}\cdot (1,j,2c_j)(X_j-1))-\mb{E}\exp(i\vec{\Theta}\cdot (1,j,2c_j)X_j')|\\
&\lesssim\sum_{j=1}^n(|\Theta_1| + n|\Theta_2| + n^{1/2}\log n |\Theta_3|)^3\mb{E}[|X_j|^3 + |X_j'|^3]\lesssim n^{-1/2}(\log n)^{21}.
\end{align*}
To justify the second-to-last inequality, we use that $|\exp(ix) - 1 - ix + x^2/2|\le |x|^3$ for $x\in\mb{R}$ from Taylor's theorem and that the first and second moments of $X_j-1$ and $X_j'$ match.
\end{proof}

\section{Translating Fourier information}\label{sec:translate-fourier}
We now translate Fourier information into probabilistic information in order to prove \cref{thm:main}. We defer the proof of the following lemma, which shows that certain coefficient sequences that will arise in our computation are well-bounded with very good probability, until the next section.
\begin{lemma}\label{lem:coefficient-combinations}
Fix $m$ and let $1\le k^\ast\le m$. Consider $\Theta\neq 0$ and $\theta=(\theta_{jk})_{1\le j<k\le m}$ with $\snorm{\theta}_\infty\le\Theta$. Consider independent random variables $X_j^{(k)}\sim\Delta$ for $1\le k\le m$ and $1\le j\le n$, where $\Delta\in\{\mr{Geom}(1/2),\mr{Pois}(1)\}$. Finally, let
\[c_j^{(k^\ast)}=\frac{1}{2\Theta}\bigg(\sum_{k<k^\ast}\theta_{kk^\ast}\bigg(\sum_{j'=1}^n(M_n^\ast)_{jj'}(X_{j'}^{(k)}-1)\bigg)+\sum_{k>k^\ast}\theta_{k^\ast k}\bigg(\sum_{j'=1}^n(M_n^\ast)_{j'j}(X_{j'}^{(k)}-1)\bigg)\bigg).\]
Then for each with probability $1-n^{-\omega(1)}$ we have that $(c_j^{(k^\ast)})_{1\le j\le n}$ is well-bounded (\cref{def:coarse}).
\end{lemma}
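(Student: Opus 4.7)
The plan is to verify the three conditions \cref{S1,S2,S3} one at a time, observing that \cref{S2} holds deterministically while \cref{S1,S3} follow from Bernstein's inequality (\cref{thm:bernstein}) applied to the linear functionals defining $c_j^{(k^\ast)}$, together with a union bound. Throughout, note that the coefficient of $X_{j'}^{(k)}-1$ in $c_j^{(k^\ast)}$ is of the form $\frac{\theta_{\min(k,k^\ast),\max(k,k^\ast)}}{2\Theta}\cdot(M_n^\ast)_{\epsilon}$ where $\epsilon\in\{(j,j'),(j',j)\}$ and $|\theta_{\cdot,\cdot}|/\Theta\le 1$, so each such coefficient has absolute value $O(1)$ by \cref{M3} and each variable $X_{j'}^{(k)}-1$ is sub-exponential with $\snorm{\cdot}_{\psi_1}=O(1)$.

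For \cref{S2}, observe that $M_n^\ast$ annihilates $\vec{v}_1=\frac{1}{\sqrt{n}}\vec{1}$ on both sides by construction (\cref{def:dice-matrix}), so $\sum_{j=1}^n(M_n^\ast)_{jj'}=0=\sum_{j=1}^n(M_n^\ast)_{j'j}$ for all $j'$. Interchanging the sums in $\sum_{j=1}^nc_j^{(k^\ast)}$ therefore gives zero, so \cref{S2} is deterministic. For \cref{S1}, fix $j$; the random variable $c_j^{(k^\ast)}$ is a sum of $(m-1)n$ independent centered sub-exponential terms with coefficients of magnitude $O(1)$, so the variance proxy and max-coefficient bounds needed for \cref{thm:bernstein} are $O(n)$ and $O(1)$ respectively. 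Thus
\[\mb{P}\bigl[|c_j^{(k^\ast)}|\ge\tfrac{1}{2}\sqrt{n}\log n\bigr]\le 2\exp\bigl(-\Omega((\log n)^2)\bigr)=n^{-\omega(1)},\]
and a union bound over $j\in[n]$ preserves this.

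For \cref{S3}, the key input is \cref{M7}: for $j'\notin[\min(j,k),\max(j,k)]$ we have $|(M_n^\ast)_{jj'}-(M_n^\ast)_{kj'}|\le C|j-k|/n$ and analogously for the transposed entries, while for the $|j-k|+1$ indices $j'$ lying in the interval the difference is $O(1)$ by \cref{M3}. Hence the coefficient of $X_{j'}^{(k)}-1$ in $c_j^{(k^\ast)}-c_k^{(k^\ast)}$ is $O(1)$ for $O(|j-k|)$ indices and $O(|j-k|/n)$ for the remainder, giving a total variance proxy of
\[O(|j-k|)+n\cdot O(|j-k|^2/n^2)=O(|j-k|),\]
since $|j-k|\le n$. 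The max individual coefficient is still $O(1)$, so \cref{thm:bernstein} yields
\[\mb{P}\bigl[|c_j^{(k^\ast)}-c_k^{(k^\ast)}|\ge\tfrac{1}{2}\sqrt{|j-k|}(\log n)^2\bigr]\le 2\exp\bigl(-\Omega((\log n)^4)\bigr)=n^{-\omega(1)},\]
and we union bound over the $O(n^2)$ pairs $(j,k)$ to conclude.

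The only genuinely delicate step is the variance-proxy computation for \cref{S3}: one must split the sum into the short interior interval, where entry differences are $O(1)$, and the long exterior region, where \cref{M7} provides the crucial $O(|j-k|/n)$ savings so that the contribution from the exterior does not overwhelm the bound. Everything else is a routine application of Bernstein and a union bound over $O(n^2)$ events with individually super-polynomially small failure probability.
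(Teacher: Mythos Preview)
Your proof is correct and follows the same approach as the paper: \cref{S2} is deterministic from the annihilation of $\vec{v}_1$ by $M_n^\ast$, while \cref{S1,S3} follow from Bernstein's inequality with the variance-proxy computation for \cref{S3} relying on the interior/exterior split via \cref{M7,M3}. One small slip: in your \cref{S3} bound the linear term $t/K\asymp\sqrt{|j-k|}(\log n)^2$ in Bernstein's minimum can be as small as $(\log n)^2$ when $|j-k|=1$, so the failure probability is $\exp(-\Omega((\log n)^2))$ rather than $\exp(-\Omega((\log n)^4))$, but this is still $n^{-\omega(1)}$ and the union bound goes through unchanged.
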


Now we prove \cref{thm:main}.
\begin{proof}[Proof of \cref{thm:main} given \cref{lem:coefficient-combinations}]
Sample $k$ dice either all from the multiset model or all from the balanced sequence model, $A_1,\ldots,A_m$ with $A_j=(a_{j1},\ldots,a_{jn})\in[n]^n$. Let the frequency counts of die $A_k$ be $\wt{a}_{ki}=|\{j\colon a_{kj}=i\}|$. We are given the tournament $D$ on $[k]$ and wish to understand the chance that $A_i$ beats $A_j$ precisely when $ij$ is a directed edge. (Note that we may assume $D$ is a full tournament since partial tournaments clearly follow by summing appropriately.)

If we are in the multiset model let $\Delta=\mr{Geom}(1/2)$ and if we are in the balanced sequence model let $\Delta=\mr{Pois}(1)$. Consider $m$ independent copies of the setup in \cref{def:triple-var-setup} (ignoring the sequence $c_j$ and random variable $T_3$), denoted by $(X_j^{(k)})_{1\le j\le n}$ and $(T_j^{(k)})_{1\le j\le 2}$ for $1\le k\le m$, corresponding to the $k$ dice. Note that $(\wt{a}_{ki})_{1\le i\le n}$ is distributed as $(X_j^{(k)})_{1\le j\le n}$ conditional on $T_1^{(k)}=T_2^{(k)}=0$ by \cref{lem:crucial}.

Let
\[Y_{k_1k_2}:=X^{(k_2)T}M_n^\ast X^{(k_1)}=(X^{(k_2)}-\sqrt{n}\vec{v}_1)^TM_n^\ast(X^{(k_1)}-\sqrt{n}\vec{v}_1)\]
for $1\le k_1<k_2\le m$ (recall $\vec{v}_1$ from \cref{def:dice-matrix}), and for $\theta=(\theta_{k_1k_2})_{1\le k_1<k_2\le m}$ let
\[Y(\theta):=\exp\bigg(i\sum_{1\le k_1<k_2\le m}\theta_{k_1k_2}Y_{k_1k_2}\bigg).\]
By \cref{thm:inversion-formula} with $T:=(T_b^{(k)})_{k\in[m],b\in[2]}$ and indexing the coordinates of $\vec{\xi}$ by $(\xi_{kb})_{k\in[m],b\in[2]}$, we have
\begin{align}
\mb{E}[\mbm{1}_{T=\vec{0}}Y(\theta)]&=(2\pi)^{-2m}\int_{[-\pi,\pi]^{2m}}\mb{E}[Y(\theta)\exp(i\vec{\xi}\cdot T)]d\vec{\xi}\notag\\
&=(2\pi)^{-2m}\int_{[-\pi,\pi]^{2m}}\mb{E}\bigg[\exp\bigg(i\sum_{1\le k_1<k_2\le m}\theta_{k_1k_2}Y_{k_1k_2}+i\vec{\xi}\cdot T\bigg)\bigg]d\vec{\xi}.\label{eq:main-fourier-inversion}
\end{align}

We fix some $\theta$ satisfying $\snorm{\theta}_\infty\le n^{-1}(\log n)^3$. Given this condition, we will now estimate the integrand and show that it is very small unless $\snorm{\vec{\xi}_{\cdot1}}_\infty=\wt{O}(n^{-1/2})$ and $\snorm{\vec{\xi}_{\cdot2}}_\infty=\wt{O}(n^{-3/2})$.

We now collect terms so as to express the argument in the exponential as a linear function of $X^{(k^\ast)}$ with coefficients depending on $X^{(k)}$ for $k\neq k^\ast$. We see
\begin{align}
&\sum_{1\le k_1<k_2\le m}\theta_{k_1k_2}Y_{k_1k_2}+\vec{\xi}\cdot T\notag\\
&=\sum_{j=1}^n\bigg(\sum_{k<k^\ast}\theta_{kk^\ast}\bigg(\sum_{j'=1}^n(M_n^\ast)_{jj'}(X_{j'}^{(k)}-1)\bigg)+\sum_{k>k^\ast}\theta_{k^\ast k}\bigg(\sum_{j'=1}^n(M_n^\ast)_{j'j}(X_{j'}^{(k)}-1)\bigg)\bigg)X_j^{(k^\ast)}+\wt{Y}_{k^\ast}\label{eq:fourier-linearization}
\end{align}
for some $\wt{Y}_{k^\ast}$ that depends only on $(X^{(k)})_{k\neq k^\ast}$. Consider $\Theta=(\log n)^3/n$, and define
\[c_j^{(k^\ast)}:=\frac{1}{2\Theta}\bigg(\sum_{k<k^\ast}\theta_{kk^\ast}\bigg(\sum_{j'=1}^n(M_n^\ast)_{jj'}(X_{j'}^{(k)}-1)\bigg)+\sum_{k>k^\ast}\theta_{k^\ast k}\bigg(\sum_{j'=1}^n(M_n^\ast)_{j'j}(X_{j'}^{(k)}-1)\bigg)\bigg)\]
for $k^\ast\in[m]$ and $j\in[n]$. We have
\begin{align}
\bigg|\mb{E}\exp\bigg(i\sum_{1\le k_1<k_2\le m}\theta_{k_1k_2}Y_{k_1k_2}+i\vec{\xi}\cdot T\bigg)\bigg|&=\bigg|\mb{E}\exp\bigg(i\bigg(\Theta\sum_{j=1}^n2c_j^{(k^\ast)}X_j^{(k^\ast)}+\xi_{k^\ast,1}\sum_{j=1}^n(X_j^{(k^\ast)}-1)\notag\\
&\qquad\qquad\qquad+\xi_{k^\ast,2}\sum_{j=1}^nj(X_j^{(k^\ast)}-1)+\wt{Y}_{k^\ast}\bigg)\bigg)\bigg|.\label{eq:fourier-combined}
\end{align}

Now we apply \cref{lem:theta2-high,lem:theta2-mid,lem:theta1-region} to gain control over $\xi$. In order to use these, we need each $(c_j^{(k^\ast)})_{1\le j\le n}$ for $k^\ast\in[m]$ to be a well-bounded coefficient sequence. By \cref{lem:coefficient-combinations}, this occurs with super-polynomially good probability over $(X^{(k)})_{k\neq k^\ast}$.

So if $n^{-1/2}\log n\le|\xi_{k^\ast 2}|\le\pi$ then by \cref{lem:theta2-high} we have that \cref{eq:fourier-combined} is of magnitude $n^{-\omega(1)}$: condition on an outcome of $(X^{(k)})_{k\neq k^\ast}$ for which $c_j^{(k^\ast)}$ is well-bounded using \cref{lem:coefficient-combinations}, and then apply \cref{lem:theta2-high}. We are using that $\Theta=n^{-1}(\log n)^3$. Similarly, if $n^{-3/2}(\log n)^6\le|\xi_{k^\ast 2}|\le n^{-1/2}\log n$ then by \cref{lem:coefficient-combinations,lem:theta2-mid} we see that \cref{eq:fourier-combined} is of magnitude $n^{-\omega(1)}$. Finally, if $|\xi_{k^\ast 2}|\le n^{-3/2}(\log n)^6$ and $n^{-1/2}(\log n)^7\le|\xi_{k^\ast 1}|\le\pi$ then \cref{lem:coefficient-combinations,lem:theta1-region} show \cref{eq:fourier-combined} is of magnitude $n^{-\omega(1)}$.

Combining these observations with \cref{eq:main-fourier-inversion,eq:fourier-linearization} we see
\begin{equation}\label{eq:truncated-fourier}
\mb{E}[\mbm{1}_{T=\vec{0}}Y(\theta)]=(2\pi)^{-2m}\int_{[-\tau_1,\tau_1]^m\times[-\tau_2,\tau_2]^m}\mb{E}\bigg[\exp\bigg(i\sum_{1\le k_1<k_2\le m}\theta_{k_1k_2}Y_{k_1k_2}+i\vec{\xi}\cdot T\bigg)\bigg]d\vec{\xi}\pm n^{-\omega(1)},
\end{equation}
where $\tau_1=n^{-1/2}(\log n)^7$ and $\tau_2=n^{-3/2}(\log n)^6$. Additionally, the product in the region of integration is interpreted as corresponding to the choice of $b\in\{1,2\}$, i.e., the region is defined by $|\xi_{k1}|\le\tau_1$ and $|\xi_{k2}|\le\tau_2$.

Recall also that we assumed $\snorm{\theta}_\infty\le n^{-1}(\log n)^3$. We can now use an approach similar to the proof of \cref{lem:clt-region} (or \cite{Lin22,MOO10}) to exchange the variables $X_j^{(k)}$ with shifted Gaussians $Z_j^{(k)}+1$ where $Z_j^{(k)}\sim\mc{N}(0,\mr{Var}[\Delta])$. Note that
\begin{align}
\sum_{1\le k_1<k_2\le m}\theta_{k_1k_2}Y_{k_1k_2}+\vec{\xi}\cdot T&=\sum_{1\le k_1<k_2\le m}\theta_{k_1k_2}(X^{(k_2)}-\sqrt{n}\vec{v}_1)^TM_n^\ast(X^{(k_1)}-\sqrt{n}\vec{v}_1)\notag\\
&\qquad\qquad+\sum_{k=1}^m\xi_{k1}\sum_{j=1}^n(X_j^{(k)}-1)+\sum_{k=1}^m\xi_{k2}\sum_{j=1}^nj(X_j^{(k)}-1).\label{eq:invariance-form}
\end{align}
Now since $X_j^{(k)}-1$ are independent and mean $0$, and have variance $\mr{Var}[\Delta]$, we are in position to apply \cref{thm:invariance}. We first compute that the influences for the degree $2$ multilinear polynomial corresponding to \cref{eq:invariance-form} are bounded by
\[O((\snorm{M_n^\ast}_{1\to2}^2+\snorm{M_n^{\ast T}}_{1\to2}^2)\cdot\snorm{\theta}_\infty^2+\snorm{\xi_{\cdot 1}}_\infty^2+n^2\snorm{\xi_{\cdot2}}_\infty^2)=O(n^{-1}(\log n)^{14})\]
using \cref{lem:operator-decay} (specifically, \cref{M4}).

Let $Z_j^{(k)}\sim\mc{N}(0,\mr{Var}[\Delta])$ be independent Gaussians and let
\[\wt{Z}(\vec{\xi}):=\sum_{1\le k_1<k_2\le m}\theta_{k_1k_2}Z^{(k_2)T}M_n^\ast Z^{(k_1)}+\sum_{k=1}^m\xi_{k1}\sum_{j=1}^nZ_j^{(k)}+\sum_{k=1}^m\xi_{k2}\sum_{j=1}^njZ_j^{(k)}.\]
By \cref{thm:invariance} and \cref{eq:truncated-fourier} we have
\begin{equation}\label{eq:gaussian-truncated}
\mb{E}[\mbm{1}_{T=\vec{0}}Y(\theta)]=(2\pi)^{-2m}\int_{[-\tau_1,\tau_1]^m\times[-\tau_2,\tau_2]^m}\mb{E}[\exp(i\wt{Z}(\vec{\xi}))]d\vec{\xi}\pm O((\tau_1\tau_2)^mn^{-1/2}(\log n)^{21}).
\end{equation}
Note that the latter two sums in $\wt{Z}$, which involve $\vec{\xi}$, only depend on $Z^{(k)}\cdot\vec{v}_1$ and $Z^{(k)}\cdot\vec{v}_2$ whereas the bilinear forms only depend on the projection of $Z^{(k)}$ to the orthogonal complement of $\mr{span}_{\mb{R}}\{\vec{v}_1,\vec{v}_2\}$ (by \cref{def:dice-matrix}). Therefore we see that the first sum is independent from the latter two. This means that the integrand in \cref{eq:gaussian-truncated} is the product of some constant and some multivariate Gaussian characteristic function.

Now, if $\snorm{\xi_{\cdot1}}_\infty\ge\tau_1$ or $\snorm{\xi_{\cdot2}}_\infty\ge\tau_2$, then easily we find there is some $k^\ast\in[m]$ with
\[\sum_{j=1}^n(\xi_{k1}+j\xi_{k2})^2\gtrsim(\log n)^{12}.\]
We therefore deduce that for such $\vec{\xi}$,
\[|\mb{E}[\exp(i\wt{Z}(\vec{\xi}))]|=\bigg|\mb{E}\exp\bigg(i\sum_{1\le k_1<k_2\le m}\theta_{k_1k_2}Z^{(k_2)T}M_n^\ast Z^{(k_1)}\bigg)\bigg|\cdot\exp(-\Omega((\log n)^{12}))\le n^{-\omega(1)}.\]
Furthermore, since the integrand is proportional to the characteristic function of some multivariate Gaussian, it is easy to see that the integral to infinity over such $\vec{\xi}$ is still $n^{-\omega(1)}$ in size. So, from \cref{eq:gaussian-truncated} we deduce
\begin{align*}
&\mb{E}[\mbm{1}_{T=\vec{0}}Y(\theta)]=(2\pi)^{-2m}\int_{\mb{R}^{2k}}\mb{E}[\exp(i\wt{Z}(\vec{\xi}))]d\vec{\xi}\pm O((\tau_1\tau_2)^mn^{-1/2}(\log n)^{21})\\
&=\mb{E}\exp\bigg(i\sum_{1\le k_1<k_2\le m}\theta_{k_1k_2}Z^{(k_2)T}M_n^\ast Z^{(k_1)}\bigg)\\
&\qquad\cdot(2\pi)^{-2m}\int_{\mb{R}^{2k}}\mb{E}\exp\bigg(i\bigg(\sum_{k=1}^m\xi_{k1}\sum_{j=1}^nZ_j^{(k)}+\sum_{k=1}^m\xi_{k2}\sum_{j=1}^njZ_j^{(k)}\bigg)\bigg)d\vec{\xi}\pm O((\tau_1\tau_2)^mn^{-1/2}(\log n)^{21}).
\end{align*}
Plugging in $\theta=\vec{0}$ and dividing, and noting that the integral in the last line is order $\Theta((n^{-1/2}\cdot n^{-3/2})^m)$ (treating $m$ as fixed), we deduce
\begin{equation}\label{eq:final-levy}
\mb{E}[Y(\theta)|T=\vec{0}]=\mb{E}\exp\bigg(i\sum_{1\le k_1<k_2\le m}\theta_{k_1k_2}Z^{(k_2)T}M_n^\ast Z^{(k_1)}\bigg)\pm O(n^{-1/2}(\log n)^{21+13m})
\end{equation}
for $\snorm{\theta}_\infty\le n^{-1}(\log n)^3$.

We wish to show
\[\bigg(\frac{Y_{jk}}{n\mr{Var}[\Delta]}\bigg)_{1\le j<k\le m}\overset{d.}{\rightarrow}(H_{jk})_{1\le j<k\le m}\]
since \cref{lem:beats-matrix} (and the facts $\vec{v}_1^TM_n^\ast=0$ and $M_n^\ast\vec{v}_1=0$) shows $A_j$ beats $A_k$ precisely when $Y_{jk}>0$. Now let $G^{(j)}$ and $H_{jk}$ be as in \cref{thm:main}. From \cref{eq:final-levy} and L\'evy continuity, we see it is enough to show
\[\bigg(\frac{Z^{(k)T}M_n^\ast Z^{(j)}}{n\mr{Var}[\Delta]}\bigg)_{1\le j<k\le m}\overset{d.}{\rightarrow}(H_{jk})_{1\le j<k\le m}\]
as $n\to\infty$. (Simple inspection of the proof shows that this would also imply the second remark following \cref{thm:main}.) Note that we may assume $\mr{Var}[\Delta]=1$ since $Z_\ell^{(j)}\sim\mc{N}(0,\mr{Var}[\Delta])$ and we are now in a scale-invariant situation with respect to $\Delta$.

We are now purely in a setting of joint convergence of certain bilinear forms of standard Gaussian vectors. Thus, the problem will ultimately reduce to limiting spectral properties of the operators $M_n^\ast$. By a variant of the spectral theorem, since $M_n^\ast$ is skew-symmetric by \cref{lem:operator-decay} (\cref{M2}), we can write $M_n^\ast=Q_n\Sigma_nQ_n^T$ where $Q_n$ is orthogonal and $\Sigma_n$ consists of diagonal $2\times 2$ blocks of the form
\[\begin{bmatrix}0&-\sigma_{n,\ell}\\\sigma_{n,\ell}&0\end{bmatrix}\]
for $\ell\in[\lfloor n/2\rfloor]$, and possibly a single $0$ in the final diagonal entry if $n$ is odd. By orthogonal invariance of Gaussian vectors, applying the orthogonal matrix $Q$, our distribution is the same as
\[\bigg(\frac{G^{(k)T}\Sigma_nG^{(j)}}{n}\bigg)_{1\le j<k\le m}\]
where $G^{(j)}$ are independent standard Gaussian vectors. We have
\begin{equation}\label{eq:gaussian-skew}
\frac{G^{(k)T}\Sigma_nG^{(j)}}{n}=\sum_{\ell=1}^{\lfloor n/2\rfloor}\frac{\sigma_{n,\ell}}{n}(G_{2\ell-1}^{(j)}G_{2\ell}^{(k)}-G_{2\ell}^{(j)}G_{2\ell-1}^{(k)}).
\end{equation}
We have that for any constant $t\ge 1$, $(\sigma_{n,\ell}/n)_{1\le\ell\le t}\to(\sigma_\ell)_{1\le\ell\le t}$ as $n\to\infty$ by \cref{lem:operator-decay} (\cref{M9}).

Now consider some fixed $t\ge 1$ (which we will take to be growing slowly at the end of this argument). Using $\sum_{\ell\ge t}(\sigma_{n,t}/n)^2=O(1/t)$ and Chebyshev's inequality we easily see that with probability $1-O(t^{-1/2})$, the sum in \cref{eq:gaussian-skew} over indices $\ell\ge t$ contributes at most $O(t^{-1/4})$. Furthermore, $(\sigma_{n,\ell}/n)_{1\le\ell\le t}\to(\sigma_\ell)_{1\le\ell\le t}$ as $n\to\infty$ by the above argument. Hence, we deduce that with probability at least $1-O(t^{-1/2})$,
\[\bigg(\frac{G^{(k)T}\Sigma_nG^{(j)}}{n}\bigg)_{1\le j<k\le m}\]
is within $\ell^\infty$ distance $O(t^{-1/4})$ of a random vector which converges to
\[\bigg(\sum_{\ell=1}^t\sigma_\ell(G_{2\ell-1}^{(j)}G_{2\ell}^{(k)}-G_{2\ell}^{(j)}G_{2\ell-1}^{(k)})\bigg)_{1\le j<k\le m}\]
in distribution. Finally, taking $t\to\infty$ slowly gives the desired result, recalling from the remark following \cref{thm:main} that almost surely the appropriate sums converge as $t\to\infty$.
\end{proof}

\section{Properties of coefficient sequences}\label{sec:coefficients}
We next prove \cref{lem:coefficient-combinations}.
\begin{proof}[Proof of \cref{lem:coefficient-combinations}]
By definition we have
\[\sum_{j=1}^n(M_n^\ast)_{jj'}=\sum_{j=1}^n(M_n^\ast)_{j'j}=0\]
hence $\sum_{j=1}^nc_j^{(k^\ast)}=0$ immediately follows, establishing \cref{S2}. Note also that $c_j^{(k^\ast)}$ is a weighted sum of independent random variables $X-1$ where $X\sim\Delta$. Since $\mb{E}[\Delta]=0$ and $\Delta$ is either Poisson or geometric we easily see that it is a sum of independent mean $0$ random variables with bounded $\snorm{X-1}_{\psi_1}$. Additionally, the coefficients of $c_j^{(k^\ast)}$ are of the form $\theta_{kk^\ast}(M_n^\ast)_{jj'}/(2\Theta)$ and $\theta_{k^\ast k}(M_n^\ast)_{j'j})/(2\Theta)$, which by definition and \cref{lem:operator-decay} (\cref{M3}) are bounded in magnitude.

Hence we can apply Bernstein's inequality (\cref{thm:bernstein}) to obtain
\[\mb{P}[|c_j^{(k^\ast)}|\ge t]\le2\exp\Big(-c_{\ref{thm:bernstein}}\min\Big(\frac{t^2}{O(n)},\frac{t}{O(1)}\Big)\Big).\]
Choose $t=\sqrt{n}\log n$, which implies that the event $|c_j^{(k^\ast)}|\ge\sqrt{n}\log n$ occurs with probability at most $\exp(-\Omega((\log n)^2))$. Taking a union bound over $n$ events for $1\le j\le n$, we obtain \cref{S1} with probability $1-n^{-\omega(1)}$.

Finally, \cref{S3} is similar. We wish to show $|c_{j_1}-c_{j_2}|\le\sqrt{|j_1-j_2|}(\log n)^2$ for all $1\le j_1<j_2\le n$ occurs with probability $1-n^{-\omega(1)}$, as then a union bound will finish. To do this, we will exploit cancellation in $(M_n^\ast)_{j_1j'}-(M_n^\ast)_{j_2j'}$. In particular, we have
\begin{align*}
c_{j_1}^{(k^\ast)}-c_{j_2}^{(k^\ast)}&=\frac{1}{2\Theta}\bigg(\sum_{k<k^\ast}\theta_{kk^\ast}\bigg(\sum_{j'=1}^n\Big((M_n^\ast)_{j_1j'}-(M_n^\ast)_{j_2j'}\Big)(X_{j'}^{(k)}-1)\bigg)\\
&\qquad\qquad+\sum_{k>k^\ast}\theta_{k^\ast k}\bigg(\sum_{j'=1}^n\Big((M_n^\ast)_{j'j_1}-(M_n^\ast)_{j'j_2}\Big)(X_{j'}^{(k)}-1)\bigg)\bigg).
\end{align*}
By \cref{lem:operator-decay} (\cref{M7}) we have that $|(M_n^\ast)_{j_1j'}-(M_n^\ast)_{j_2j'}|=O(|j_1-j_2|/n)$ for all but $O(|j_1-j_2|)$ values of $j'$, for which the value is $O(1)$. Since $M_n^\ast$ is skew-symmetric (\cref{M2}), the same occurs when we transpose the matrix. Therefore we can use Bernstein's inequality (\cref{thm:bernstein}) again, this time deducing
\[\mb{P}[|c_{j_1}^{(k^\ast)}-c_{j_2}^{(k^\ast)}|\ge t]\le2\exp\Big(-c_{\ref{thm:bernstein}}\min\Big(\frac{t^2}{O(|j_1-j_2|)},\frac{t}{O(1)}\Big)\Big).\]
Taking $t=\sqrt{|j_1-j_2|}(\log n)^2\ge(\log n)^2$ and taking a union bound, we deduce the desired.
\end{proof}

We now prove that the coefficient sequence coming from \cref{lem:beats} is typically coarse. This will be used to prove \cref{thm:ties} later. We note that the idea of breaking into various intervals and extracting tuples of coefficients with the desired properties also appears in the work of Polymath, in particular in \cite[Lemma~5.10]{Pol22}; however the proofs here are simpler as we require only a ``physical space'' condition on the coefficients.
\begin{lemma}\label{lem:coefficient-sequence}
Let $\Delta\in\{\mr{Geom}(1/2),\mr{Pois}(1)\}$. Let $\wt{X}_j\sim\Delta$ for all $1\le j\le n$ and then condition on $\sum_{j=1}^n\wt{X}_j=n$ and $\sum_{j=1}^nj\wt{X}_j=n(n+1)/2$. If
\[c_j=\sum_{1\le k<j}\wt{X}_j + \frac{\wt{X}_j}{2} - (j-1/2)\]
then with probability $1-n^{-\omega(1)}$ the sequence $(c_j)_{1\le j\le n}$ is coarse (\cref{def:coarse}).
\end{lemma}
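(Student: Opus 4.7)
The plan is to work in the unconditioned model where $\wt{X}_j\stackrel{\text{i.i.d.}}{\sim}\Delta$, invoking \cref{lem:crucial}, and transfer results via the standard two-dimensional lattice local central limit theorem $\mb{P}[T_1=T_2=0]=\Theta(n^{-2})$ (applied to the $\psi_1$-bounded vectors $(\wt{X}_j-1,j(\wt{X}_j-1))$): any property that fails with unconditional probability $n^{-\omega(1)}$ fails with conditional probability $n^{-\omega(1)}$ as well.

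Property \cref{S2} holds identically under the conditioning by a direct expansion using $\sum\wt{X}_j=n$ and $\sum j\wt{X}_j=n(n+1)/2$. For \cref{S1,S3}, note that $c_j$ (resp.\ $c_{j_1}-c_{j_2}$) is a linear combination with bounded coefficients of $O(n)$ (resp.\ $O(|j_1-j_2|)$) independent mean-zero $\psi_1$-bounded variables, so \cref{thm:bernstein} combined with a union bound over $n^2$ pairs yields the tail $\exp(-\Omega((\log n)^{2}))$. For \cref{S5}, the condition $c_j=c_{j+1}=c_{j+2}-1/2$ is equivalent to $\wt{X}_j+\wt{X}_{j+1}=2$ and $\wt{X}_{j+1}+\wt{X}_{j+2}=3$, which is realized e.g.\ by $(\wt{X}_j,\wt{X}_{j+1},\wt{X}_{j+2})=(1,1,2)$ with positive constant probability; a Chernoff bound on $\lfloor n/3\rfloor$ disjoint length-three windows then gives $\Omega(n)$ successes with probability $1-\exp(-\Omega(n))$.

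For \cref{S6}, an explicit computation using the recursion $c_{j+1}-c_j=\wt{X}_j/2+\wt{X}_{j+1}/2-1$ realizes $f_j:=c_j-2c_{j+y}+c_{j+2y}$ as a weighted sum of $O(y)$ independent centered $\psi_1$-bounded variables with variance $\Theta(y)$ and uniformly bounded coefficients, so a Berry--Esseen comparison to a Gaussian shows $\mb{P}[|f_j|\ge\sqrt{y}]\ge c_0>0$. The obstacle here is that naive disjoint windows give only $\Theta(n/y)$ independent trials, insufficient for the target $n/\log n$ when $y$ is near $n/(\log n)^2$. To sidestep this, I would partition $[1,n-2y]$ into $4y+1$ residue classes modulo $4y+1$; within each class, the indicators $\mbm{1}_{|f_j|\ge\sqrt{y}}$ depend on pairwise disjoint groups of $\wt{X}_k$'s and so are i.i.d.\ Bernoullis of constant mean. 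Chernoff per class and a union bound over the $4y+1$ classes yields a total count $N_y\gtrsim n$ with failure probability $(4y+1)\exp(-\Omega(n/y))$, which is $n^{-\omega(1)}$ throughout $y\le n/(\log n)^2$. A final union bound over the $O(n)$ admissible values of $y$ preserves super-polynomial decay.

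The main obstacle is \cref{S4}. Writing $c=A(\wt{X}-\mathbf{1})$ with $A_{jk}=\mbm{1}_{k<j}+\tfrac{1}{2}\mbm{1}_{k=j}$, the quantity $Q:=\min_{a,b}\sum(c_j-aj-b)^2$ is realized as a quadratic form $V^TM^TMV$ in $V=\wt{X}-\mathbf{1}$, where $M=(I-\vec{v}_1\vec{v}_1^T-\vec{v}_2\vec{v}_2^T)A$. A direct spectral computation, using that the continuous limit of $AA^T$ is the Brownian motion kernel $\min(s,t)$ with Karhunen--Lo\`eve eigenvalues $\Theta(n^2/k^2)$, shows $\mb{E}[Q]=\Theta(n^2)$ and that $M^TM$ has $\Omega(n)$ eigenvalues of order $\Omega(1)$. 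Swapping $V$ for matched-variance Gaussians via \cref{thm:invariance} applied to a smooth approximation of $\mbm{1}_{\{Q\le n^2/(\log n)^2\}}$, a Chernoff moment generating function bound for the weighted chi-squared $\sum_k\lambda_k Z_k^2$ using the many $\Omega(1)$-scale modes yields the desired small-ball estimate $\mb{P}[Q\le n^2/(\log n)^2]\le\exp(-\Omega(n\log\log n))=n^{-\omega(1)}$.
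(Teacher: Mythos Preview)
Your treatment of \cref{S1}, \cref{S2}, \cref{S3}, \cref{S5}, \cref{S6} is essentially the paper's argument (the residue-class decomposition for \cref{S6} is exactly the paper's ``translations of $J$'' device). The gap is in \cref{S4}.

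First, the stated Gaussian small-ball bound $\exp(-\Omega(n\log\log n))$ is unjustified and in fact false. The observation that $M^TM$ has $\Omega(n)$ eigenvalues of order $\Omega(1)$ only yields $Q\gtrsim\chi_{\Omega(n)}^2$, which is useless here because the target threshold $n^2/(\log n)^2$ is far above the mean $\Theta(n)$ of that chi-square. A correct MGF computation using the full spectrum $\lambda_k\asymp(n/k)^2$ gives only $\exp(-\Theta((\log n)^2))$ in the Gaussian model, which is still $n^{-\omega(1)}$ but much weaker than you claim.

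More seriously, the transfer step via \cref{thm:invariance} cannot work. The invariance principle contributes an additive error of order $\snorm{\psi'''}_\infty\cdot\sum_t\mr{Inf}_t^{3/2}$, which for $Q/n^2$ and any reasonable mollifier $\psi$ is at best $n^{-1/2+o(1)}$. This polynomial error swamps any super-polynomial Gaussian bound, so you can only conclude $\mb{P}[Q\le n^2/(\log n)^2]\lesssim n^{-1/2+o(1)}$ for the discrete model, not $n^{-\omega(1)}$. (There is also the issue that $Q$ is not multilinear in the $V_j$, so \cref{thm:invariance} does not apply as written; but this is secondary.) A direct MGF argument in the discrete model fails for the same reason your ``many $\Omega(1)$-modes'' argument fails, and a direct small-ball argument via the top eigenvectors fails because the linear forms $u_k^TV$ are not independent for non-Gaussian $V$.

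The paper sidesteps all of this by not going through Gaussians at all for \cref{S4}: it simply reruns the \cref{S6} argument at the single scale $y=\lfloor n/(\log n)^{3/2}\rfloor$ (so Chernoff still gives $\exp(-\Omega(n/y))=n^{-\omega(1)}$), producing $\Omega(n)$ indices $j$ with $|c_j-2c_{j+y}+c_{j+2y}|\ge\sqrt{y}$, and then uses the elementary pointwise inequality
\[(c_j-aj-b)^2+(c_{j+y}-a(j+y)-b)^2+(c_{j+2y}-a(j+2y)-b)^2\ge\tfrac{1}{6}(c_j-2c_{j+y}+c_{j+2y})^2\]
(the second difference kills any affine $aj+b$) to conclude $\min_{a,b}\sum_j(c_j-aj-b)^2\gtrsim n\cdot y\ge n^2/(\log n)^2$ directly in the discrete model.
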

\begin{proof}
We will prove that everything but \cref{S2} occurs in the unconditioned independent model with probability $1-n^{-\omega(1)}$. Then note that
\[\mb{P}\bigg[\sum_{j=1}^n\wt{X}_j=n\wedge\sum_{j=1}^nj\wt{X}_j=\frac{n(n+1)}{2}\bigg]\gtrsim n^{-2}\]
from \cref{lem:conditional-mass} (which is proved only using results up to \cref{sec:fourier}) or from the line before \cref{eq:final-levy} in the proof of \cref{thm:main}.

Thus the failure probability of any property in the conditional model will be at most equal to $(n^{-\omega(1)})/(\Omega(n^{-2}))=n^{-\omega(1)}$ by Bayes' rule. So it suffices to consider the independent model, noting that \cref{S2} follows from the conditions $\sum_{j=1}^n\wt{X}_j=n$ and $\sum_{j=1}^nj\wt{X}_j=n(n+1)/2$.

\cref{S1,S3} are simple Bernstein inequality calculations, similar to the proof of \cref{lem:coefficient-combinations}, and we omit the details. For \cref{S5}, note that $c_j=c_{j+1}=c_{j+2}-1/2$ follows if $\wt{X}_j=\wt{X}_{j+1}=0$ and $\wt{X}_{j+2}=1$. Let $J$ be a $3$-separated sequence of size $\Omega(n)$ and note that $j\in J$ satisfies the condition required by \cref{S5} with probability $\Omega(1)$. Thus Bernstein's inequality or Chernoff easily implies \cref{S5}.

For \cref{S6}, consider $J$ which is all multiples of $4y$ in $\{1,\ldots,n-4y\}$, of size $\Omega(n/y)$. For each $j\in J$, the probability that $|c_j-2c_{j+y}+c_{j+2y}|\ge\sqrt{y}$ is seen to be $\Omega(1)$ by the central limit theorem, and this is independent over all $j\in J$. Thus by Bernstein or Chernoff, with probability at least $1-\exp(-\Omega(n/y))$ there are at least $\Omega(n/y)$ many $j\in J$ satisfying the condition required by \cref{S6}. We can repeat the argument for the translations of $J$ by $\{1,2,\ldots,y\}$ and take a union bound, which yields $\Omega(n)$ many indices $j$ with probability $1-n^{-\omega(1)}$ as desired.

Finally, we consider \cref{S4}. We can mimic the proof of \cref{S6} above except with $y=\lfloor n/(\log n)^{3/2}\rfloor$ and still deduce that with probability $1-n^{-\omega(1)}$, there are at least $\Omega(n)$ indices $1\le j\le n-2y$ with $|c_j-2c_{j+y}+c_{j+2y}|\ge\sqrt{y}$. We can pass to a subset $J$ of size $\Omega(n)$ with the property that $j-j'\notin\{\pm y,\pm 2y\}$ for all $j,j'\in J$. For each $j\in J$ we have
\[(c_j-aj-b)^2+(c_{j+y}-a(j+y)-b)^2+(c_{j+2y}-a(j+2y)-b)^2\ge\frac{1}{4}(c_j-2c_{j+y}+c_{j+2y})^2\]
using the inequality $x_1^2+x_2^2+x_3^2\ge(x_1-2x_2+x_3)^2/4$. Hence we deduce
\[\sum_{j=1}^n(c_j-aj-b)^2\gtrsim|J|\cdot(\sqrt{y})^2/4\gtrsim ny\ge n^2/(\log n)^2\]
for all $a,b\in\mb{R}$. The result follows.
\end{proof}

\section{Consequences of \texorpdfstring{\cref{thm:main}}{Theorem 1.4}}\label{sec:consequence}
We now derive the claimed symmetry facts from the statement of \cref{thm:main}.
\begin{proof}[Proof of \cref{cor:symmetry}]
For the first consequence, note that the Gaussian distribution is negation invariant and therefore the result for reversing the edges at vertex $u$ follows by negating the Gaussian $G^{(u)}$ in \cref{thm:main}. For the second consequence, simple replace every die with its ``complement'', i.e., we map $(a_1,\ldots,a_n)$ to $(n+1-a_n,\ldots,n+1-a_i)$. (In the limiting expression of \cref{thm:main}, this corresponds to switching $G_{2\ell-1}^{(j)}$ and $G_{2\ell}^{(j)}$ for all $1\le j\le m$ and $\ell\ge 1$.)
\end{proof}

Now we turn to \cref{cor:convergence}. We require the following lemma relating a tournamenton having image in the set $\{0,1\}$ to the distribution of its $k$-vertex subtournaments.
\begin{lemma}\label{lem:measure-theory}
Fix a tournamenton $\mc{T}$. Suppose that for every $\eps>0$, for all $M$ sufficiently large there is a set $\mc{F}_M$ of $M$-vertex tournaments with $|\mc{F}_M|\le 2^{\eps M^2}$ such that a $\mc{T}$-random tournament on $M$ vertices lies in $\mc{F}_M$ with probability at least $1-\eps$. Then $\mu(\{(x,y)\colon\mc{T}(x,y)\notin\{0,1\}\}) = 0$ where $\mu$ is the Lebesgue measure on $[0,1]^2$.
\end{lemma}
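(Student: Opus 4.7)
The plan is to argue the contrapositive via an entropy calculation. Suppose $\mu(\{(x,y):\mc{T}(x,y)\notin\{0,1\}\})>0$; then there is a $\delta>0$ such that the set $S_\delta:=\{(x,y):\mc{T}(x,y)\in[\delta,1-\delta]\}$ satisfies $\mu(S_\delta)\ge\delta$. Let $T$ denote a $\mc{T}$-random tournament on $M$ vertices, sampled by drawing $\mc{X}=(x_1,\ldots,x_M)$ i.i.d.\ uniform on $[0,1]$ and then independently orienting $i\to j$ with probability $\mc{T}(x_i,x_j)$ for each $i<j$. The goal is to derive contradictory upper and lower bounds on the Shannon entropy $H(T)$.

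For the lower bound, the $\binom{M}{2}$ edges are conditionally independent given $\mc{X}$, so
\[H(T)\ge H(T\mid\mc{X})=\binom{M}{2}\int_{[0,1]^2}h(\mc{T}(x,y))\,dx\,dy\ge\binom{M}{2}\cdot\delta\,h(\delta),\]
where $h$ denotes the binary entropy function. The last inequality restricts the integral to $S_\delta$, where the integrand is at least $h(\delta)>0$. This produces $H(T)\ge c_\delta M^2$ for an absolute constant $c_\delta>0$ depending only on $\delta$.

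For the upper bound, apply the hypothesis with a small $\eps>0$ (to be fixed momentarily) to obtain a set $\mc{F}_M$ of $M$-vertex tournaments with $|\mc{F}_M|\le 2^{\eps M^2}$ and $\mb{P}[T\in\mc{F}_M]\ge 1-\eps$. Writing $I=\mbm{1}_{T\in\mc{F}_M}$ and using standard entropy inequalities,
\[H(T)\le H(T,I)=H(I)+H(T\mid I)\le 1+(1-\eps)\log_2|\mc{F}_M|+\eps\log_2\bigl(2^{\binom{M}{2}}\bigr)\le 1+\tfrac{3}{2}\eps M^2,\]
where we also used the trivial bound that there are $2^{\binom{M}{2}}$ tournaments on $M$ vertices. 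Choosing $\eps<c_\delta/3$ and letting $M$ be sufficiently large contradicts the lower bound and completes the contrapositive.

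The proof is essentially a clean information-theoretic calculation once framed appropriately; the main conceptual step is spotting that the hypothesis is precisely a sub-quadratic entropy statement, while a $\mc{T}$ that is not almost everywhere $\{0,1\}$-valued has strictly positive average conditional entropy per edge. No serious technical obstacles arise beyond routine verification of the entropy manipulations and the fact that the uniform sampling of the $x_i$ corresponds to Lebesgue measure on $[0,1]^2$ when integrating $h(\mc{T}(x,y))$.
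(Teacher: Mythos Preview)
Your proof is correct and follows the same entropy-contradiction strategy as the paper: assume the bad set has positive measure, extract $\delta>0$ with $\mu(S_\delta)\ge\delta$, and play a $\Theta(M^2)$ lower bound on the entropy of the $\mc{T}$-random tournament against the $o(M^2)$ upper bound forced by the hypothesis. The only real difference is in how the lower bound is obtained. The paper samples $x_1,\ldots,x_M$, applies Azuma--Hoeffding to the Doob martingale to show that the number of pairs $(i,j)$ with $\mc{T}(x_i,x_j)\in[\delta,1-\delta]$ is at least $\delta M^2/4$ with probability $\ge 1-\delta$, and then lower-bounds the conditional entropy on that event. You instead compute $H(T\mid\mc{X})=\binom{M}{2}\int_{[0,1]^2} h(\mc{T}(x,y))\,dx\,dy$ directly and bound the integral from below by restricting to $S_\delta$. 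Your route is a genuine streamlining: a one-line expectation replaces the concentration step, so no martingale inequality is needed. (One cosmetic slip: in the upper bound you should replace $(1-\eps)\log_2|\mc{F}_M|$ by $\mb{P}[I=1]\log_2|\mc{F}_M|\le\log_2|\mc{F}_M|$, since $\mb{P}[I=1]$ may exceed $1-\eps$; the final bound $1+\tfrac{3}{2}\eps M^2$ is unaffected.)
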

\begin{proof}
Suppose that $\mu(\{(x,y)\colon\mc{T}(x,y)\notin \{0,1\}\})>0$. Then there exists $\delta>0$ such that
\begin{equation}\label{eq:measure-nonzero}
\mu(\{(x,y)\colon\mc{T}(x,y)\in[\delta,1-\delta]\})\ge\delta.
\end{equation}
Consider sampling $M$ random points $x_1,\ldots,x_M$ from $0$ to $1$ uniformly at random. The $\mc{T}$-random tournament is obtained by sampling a directed edge from $x_i$ to $x_j$ with probability $\mc{T}(x_i,x_j)$ (and otherwise putting one from $x_j$ to $x_i$) for all $1\le i<j\le M$. Let 
\[X_M = \{(i,j)\in[M]^2\colon i<j\text{ and }\mc{T}(x_i,x_j)\in[\delta,1-\delta]\}.\]
We have that $\mb{E}|X_M|\ge\delta\binom{M}{2}$ from \cref{eq:measure-nonzero} and thus by applying the Azuma--Hoeffding inequality (\cref{lem:azuma}) on the Doob martingale formed by revealing $x_1,\ldots,x_M$ in order, we see that $\mb{P}[X_M\ge\delta M^2/4]\ge 1-\delta$ for $M$ sufficiently large as a function of $\delta$.

This means there is an event $\mc{E}$ occurring with probability at least $1-\delta$ over the randomness of $x_1,\ldots,x_M$ such that conditional on $\mc{F}$, the entropy of our $\mc{T}$-random tournament is at least $H(\mr{Ber}(\delta))\cdot|X_M|\gtrsim\delta^3M^2$.

But by initial assumption there is an event $\mc{F}$ holding with probability $1-\eps$ such that the original $M$-vertex tournament conditional on $\mc{E}$ is in $\mc{F}_M$. We see that the entropy of the $\mc{T}$-random tournament must be at most $H(\eps)+\log_2|\mc{F}_M|+\eps\log_2(2^{m^2})\lesssim\eps M^2$. Taking $\eps$ much smaller than $\delta^3$ and $M$ sufficiently large, we obtain a contradiction.
\end{proof}

We now are in position to prove \cref{cor:convergence}. 
\begin{proof}[Proof of \cref{cor:convergence}]
We first note that $T_n$ converges to a limit tournamenton $\mc{T}$ since \cref{thm:main} implies that for a fixed digraph $D$ the associated densities converge. Thus the result follows via convergence of subgraph densities implying convergence in cut metric (see \cite{DJ08} where this theory is worked out in the case of directed graphs; the theory for tournamentons follows as a direct consequence via say applying \cite[Theorem~4.1]{Th18} which characterizes a directed graph limit being a tournamenton in terms of certain subgraph counts vanishing).

The more difficult part of \cref{cor:convergence} is verifying the conditions of \cref{lem:measure-theory}. Fix $m$ dice, where we will consider $m$ large, and consider the random series $H_{jk}$ from \cref{thm:main}. For these $m$ dice, reveal $G_\ell^{(j)}$ for $\ell\le 2\lfloor m^{1/2}\rfloor$ and round the value to the nearest $1/m^{25}$, and label each vertex with the corresponding tuple of values. Call the collection of these labels $L(G)$, which depends only on $G_\ell^{(j)}$ for $\ell\le2\lfloor m^{1/2}\rfloor$. Note that with probability $1-\exp(-\Omega(m))$ all these sampled Gaussians are bounded by $m$ and hence there is a set $\mc{L}$ of at most $\exp(O(m^{3/2}\log m))$ different possible labelings such that $L(G)\in\mc{L}$ under this event. Furthermore given these labels $L(G)$, the value
\[H_{jk}^\ast=\sum_{\ell=1}^{\lfloor m^{1/2}\rfloor}\sigma_\ell(G_{2\ell-1}^{(j)}G_{2\ell}^{(k)}-G_{2\ell}^{(j)}G_{2\ell-1}^{(k)})\]
is pinned down to within an interval $I_{jk}(G)$ (defined whenever $L(G)\in\mc{L}$) of length at most $m^{-20}$, say, for all $1\le j<k\le m$.

Note that $H_{j,k}-H_{j,k}^\ast$ has variance $O(m^{-1/2})$ and hence with probability $1-\exp(-m^{-\Omega(1)})$ all these infinite tails are of magnitude at most say $m^{-1/5}$ by \cref{thm:concentration-hypercontractivity}.

Let $\mc{L}'$ be the set of labelings $L(G)\in\mc{L}$ such that the interval $I_{jk}(G)$ intersects $[-m^{-1/5},m^{1/5}]$ for at most $m^{2-1/20}$ many choices of $1\le j<k\le m$. Let $B(G)$ be the set of $(j,k)$ where there is an intersection. Note $B(G)$ depends only on $L(G)$ whenever $L(G)\in\mc{L}$. Combining the observations above, there is an event $\mc{E}$ which occurs with probability $1-\exp(-m^{\Omega(1)})$ such that the following holds if we assume $\mc{E}$:
\begin{itemize}
    \item $L(G)\in\mc{L}$ where $\mc{L}$ is a deterministic set of size $\exp(O(m^{3/2}\log m))$;
    \item If $L(G)\in\mc{L}'$ then the digraph $D(G):=\{(j,k)\colon H_{jk}>0\}$ depends only on the identity of $L(G)$ and on whether $(j,k)$ or $(k,j)$ is in $D$ for all $(j,k)\in B(G)$. Here $\mc{L}'$ is the deterministic subset of $\mc{L}$ defined above.
\end{itemize}
If we can show that $L(G)\in\mc{L}'$ with probability $1-O(m^{-1/20})$, say, then by \cref{thm:main} this will establish the hypotheses of \cref{lem:measure-theory} and hence this will finish the proof. Indeed, then we know that with good probability the digraph $D(G)$ can be determined by revealing $L(G)\in\mc{L}$ (with $\exp(O(m^{3/2}\log m))$ choices), which determines $B(G)$, and then revealing whether $(j,k)\in D(G)$ for all $(j,k)\in B(G)$, which has at most $2^{m^{2-1/20}}$ choices. This will establish the hypothesis of \cref{lem:measure-theory} for $M=\Omega(\eps^{-20})$, say.

Finally, by \cref{thm:gauss-anti} for fixed $1\le j<k\le m$ the probability that $H_{jk}^\ast=O(m^{-1/5})$ is at most $O(m^{-1/10})$. Therefore by Markov's inequality, there are at most $m^{2-1/20}$ pairs in $B(G)$ with probability $1-O(m^{-1/20})$. The result follows.
\end{proof}

We end by providing a short proof given \cref{cor:convergence} that the four-cycle ($A$ beats $B$, $B$ beats $C$, $C$ beats $D$, and $D$ beats $A$) occurs with a greater than $1/16$ limiting probability. This consequence was the method used by Cornacchia and H{\k{a}}z{\l}a to disprove quasirandomness of dice tournaments (in the simpler model where die faces are drawn independently at random from the uniform distribution on $[0,1]$). The surprising fact that this limiting probability is larger than $1/16$ falls out naturally of a \cref{cor:symmetry,cor:convergence}. Note that this means that if $A$ beats $B$, $B$ beats $C$, and $C$ beats $D$ then $D$ is \emph{more} likely to beat $A$ in the limit, since a path with $3$ edges is a tree so has limiting probability $1/8$. (An analogous result holds for larger even cycles which we leave as an exercise to the reader.)
\begin{proposition}\label{prop:4-cycle}
Let $\mc{T}$ be as in \cref{cor:convergence}. We have that 
\[\int_0^1\int_0^1\int_0^1\int_0^1 \mc{T}(x_1,x_2)\mc{T}(x_2,x_3)\mc{T}(x_3,x_4)\mc{T}(x_4,x_1)dx_1dx_2dx_3dx_4>\frac{1}{16}.\]
\end{proposition}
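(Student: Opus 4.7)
The plan is to combine \cref{cor:convergence}, which tells us $\mc{T}$ takes values in $\{0,1\}$ almost everywhere, with the forest-density identities that follow from \cref{cor:symmetry}. Define $W(x,y):=2\mc{T}(x,y)-1$, so that $W$ is measurable, skew-symmetric, and takes values in $\{-1,+1\}$; since $W\in L^2([0,1]^2)$ it defines a real skew-adjoint Hilbert--Schmidt operator $\mc{W}$ on $L^2([0,1])$. Expanding
\[\mc{T}(x_1,x_2)\mc{T}(x_2,x_3)\mc{T}(x_3,x_4)\mc{T}(x_4,x_1)=\frac{1}{16}\prod_{i=1}^4(1+W_i),\]
with $(W_1,W_2,W_3,W_4):=(W(x_1,x_2),W(x_2,x_3),W(x_3,x_4),W(x_4,x_1))$, and integrating, the $4$-cycle density splits as a sum over $S\subseteq\{1,2,3,4\}$ of the integrals $\int\prod_{i\in S}W_i\,dx$.

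First I would verify that $\int\prod_{i\in S}W_i\,dx=0$ for every proper subset $S\subsetneq\{1,2,3,4\}$. The edges indexed by such an $S$ always form a forest on $\{x_1,x_2,x_3,x_4\}$, and by the leaf-removal consequence of \cref{cor:symmetry} discussed after its statement, every $k$-edge forest has $\mc{T}$-density exactly $2^{-k}$. Substituting $\mc{T}_i=(1+W_i)/2$ into these identities and inducting on $|S|$ yields the claim. Consequently
\[\int \mc{T}(x_1,x_2)\mc{T}(x_2,x_3)\mc{T}(x_3,x_4)\mc{T}(x_4,x_1)\,dx=\frac{1}{16}\left(1+\int W(x_1,x_2)W(x_2,x_3)W(x_3,x_4)W(x_4,x_1)\,dx\right),\]
so the proposition reduces to showing the final $4$-cycle integral of $W$ is strictly positive.

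For the main step, let $K(a,b):=\int W(a,t)W(t,b)\,dt$ be the kernel of $\mc{W}^2$. Skew-symmetry of $W$ gives $K(a,b)=-\int W(t,a)W(t,b)\,dt$, which is manifestly symmetric in $a,b$, so
\[\int W(x_1,x_2)W(x_2,x_3)W(x_3,x_4)W(x_4,x_1)\,dx=\int K(x_1,x_3)K(x_3,x_1)\,dx_1\,dx_3=\|K\|_{L^2([0,1]^2)}^2\ge 0.\]
Equality would force $\mc{W}^2=0$, but for a real skew-adjoint operator this gives $\langle\mc{W}f,\mc{W}f\rangle=-\langle\mc{W}^2f,f\rangle=0$ for every $f$, hence $\mc{W}=0$; this contradicts $|W|\equiv 1$ (which gives $\|W\|_{L^2([0,1]^2)}^2=1$). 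Thus the $4$-cycle integral of $W$ is strictly positive, yielding $p_4>1/16$.

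The main obstacle is just the forest-vanishing step, which is really a careful finite induction on $|S|$ using the tree-density identity as a black box; once it is in place, the key positivity is essentially an observation about the symmetry and nonvanishing of $\mc{W}^2$.
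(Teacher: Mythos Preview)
Your proof is correct and takes a genuinely different route from the paper's. The paper argues directly on the $\mc{T}$-integral: it uses the vertex-flip symmetry of \cref{cor:symmetry} to rewrite the $4$-cycle density as $\int_{x_1,x_3}\big(\int_{x_2}\mc{T}(x_1,x_2)\mc{T}(x_2,x_3)\big)^2$, applies Cauchy--Schwarz, flips again, applies Cauchy--Schwarz a second time, and arrives at $(\int\mc{T})^4=1/16$. Strictness is then obtained by observing that equality in this chain forces the codegree condition $\int\mc{T}(x,z)\mc{T}(z,y)\,dz=1/4$ a.e., which by the Chung--Graham characterization of quasirandom tournaments would give $\mc{T}\equiv 1/2$, contradicting \cref{cor:convergence}.

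Your argument instead passes to $W=2\mc{T}-1$, uses the forest-density consequence of \cref{cor:symmetry} to kill all proper subterms in the multilinear expansion, and identifies the surviving $W$-$4$-cycle integral as $\snorm{K}_{L^2}^2=\operatorname{tr}(\mc{W}^4)$, which is nonnegative by skew-adjointness. Strictness then comes from the elementary fact that $\mc{W}^2=0$ forces $\mc{W}=0$, contradicting $|W|\equiv 1$ from \cref{cor:convergence}. This avoids citing the external codegree/quasirandomness equivalence, and it makes the excess over $1/16$ explicit as $\tfrac{1}{16}\operatorname{tr}(\mc{W}^4)$. The trade-off is the small induction needed for the forest-vanishing step; in the paper's approach that bookkeeping is replaced by two applications of Cauchy--Schwarz. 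Both approaches ultimately hinge on the same two inputs, \cref{cor:symmetry} and \cref{cor:convergence}, but weave them in differently.
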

\begin{proof}
Note that 
\begin{align*}
&\int_0^1\int_0^1\int_0^1\int_0^1 \mc{T}(x_1,x_2)\mc{T}(x_2,x_3)\mc{T}(x_3,x_4)\mc{T}(x_4,x_1)dx_1dx_2dx_3dx_4\\
&\qquad= \int_0^1\int_0^1\int_0^1\int_0^1 \mc{T}(x_1,x_2)\mc{T}(x_2,x_3)\mc{T}(x_4,x_3)\mc{T}(x_1,x_4)dx_1dx_2dx_3dx_4\\
&\qquad= \int_0^1\int_0^1\bigg(\int_0^1 \mc{T}(x_1,x_2)\mc{T}(x_2,x_3)dx_2\bigg)^2dx_1dx_3\\
&\qquad\ge\bigg(\int_0^1\int_0^1\int_0^1 \mc{T}(x_1,x_2)\mc{T}(x_2,x_3)dx_1dx_3dx_2\bigg)^2\\
&\qquad=\bigg(\int_0^1\int_0^1\int_0^1 \mc{T}(x_2,x_1)\mc{T}(x_2,x_3)dx_1dx_3dx_2\bigg)^2\\
&\qquad=\bigg(\int_0^1\bigg(\int_0^1 \mc{T}(x_2,x_1)dx_1\bigg)^2dx_2\bigg)^2\\
&\qquad\ge \bigg(\int_0^1\int_0^1\mc{T}(x_2,x_1)dx_1dx_2\bigg)^4=\frac{1}{16}
\end{align*}
where we have applied \cref{cor:symmetry} on vertex $4$, factoring the square, Cauchy--Schwarz, \cref{cor:symmetry} on vertex $1$, factoring the square, Cauchy--Schwarz, and then used that $\mc{T}$ has average $1/2$ (from \cref{thm:main} and symmetry). For equality to occur we must have that for almost all $x,y$, 
\[1/4 = \int_0^1\mc{T}(x,z)\mc{T}(z,y)dz.\]
By the equivalence of codegree counts with quasirandomness for tournaments; see \cite[P4, Theorem~1]{CG91} in work of Chung and Graham, in order for equality to occur we must have $\mc{T}(x,y) = 1/2$ almost everywhere in Lebesgue measure. This contradicts the statement of \cref{cor:convergence}, so the inequality is strict.
\end{proof}

We also briefly derive that any tournament $T$ on $m$ vertices occurs in the limit with positive probability. This recovers a recent result of Akin \cite{Aki21} (which was proven by dynamical methods and which in turn reproves results of Moon and Moser \cite{MM67} which allows for dice to not have the same means and a result of Finkelstein and Thorp \cite{FT00} which constructs cycles of arbitrary length via a more explicit construction).
\begin{proposition}\label{prop:construct}
Recall the setup of \cref{thm:main} and fix any digraph $D$. We have that 
\[\mb{P}[H_{jk}>0\emph{ for all }jk\in E(D)]>0.\]
Equivalently, the $D$-density in $\mc{T}$ is positive. In particular, given any $D$ there exists a set of dice which produce the digraph $D$.
\end{proposition}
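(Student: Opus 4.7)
The plan is to first observe that by \cref{thm:main}, the $D$-density in $\mc{T}$ equals $\mb{P}[H_{jk} > 0 \text{ for all } jk \in E(D)]$, so positivity of this probability yields both the density claim and, by the definition of limits, the existence of concrete realizing dice for all sufficiently large $n$. It therefore suffices to prove the probabilistic inequality.

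I would proceed by induction on $m$. The base case $m = 1$ is vacuous. For the inductive step, split $E(D)$ into edges $E(D')$ within $\{1,\ldots,m-1\}$ and edges $E_m$ incident to vertex $m$. The event $E'$ that the signs on $E(D')$ come out correctly depends only on $G^{(1)}, \ldots, G^{(m-1)}$, and by induction $\mb{P}[E'] > 0$. Condition on these vectors and examine the conditional distribution of $(H_{jm})_{1 \le j \le m-1}$ with respect to the randomness of $G^{(m)}$.

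The vector $(H_{jm})_{1 \le j \le m-1}$ is linear in $G^{(m)}$, hence conditionally a centered Gaussian in $\mb{R}^{m-1}$; its covariance is the Gram matrix of the scaled vectors $\wt{G}^{(j)} := (\sigma_\ell G^{(j)}_{2\ell-1}, \sigma_\ell G^{(j)}_{2\ell})_{\ell \ge 1}$ for $1 \le j \le m-1$. The key step, which I expect to be the main technical point, is to show this Gram matrix is almost surely positive definite. For this, truncate $\wt{G}^{(j)}$ to the first $m-1$ blocks (i.e., $\ell \in \{1,\ldots,m-1\}$), obtaining vectors in $\mb{R}^{2(m-1)}$. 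Since $\sigma_\ell > 0$ by \cref{lem:operator-decay} and the underlying $(m-1) \times 2(m-1)$ random matrix of entries $G^{(j)}_k$ is i.i.d.\ Gaussian, it has full row rank almost surely, and so $\wt{G}^{(1)}, \ldots, \wt{G}^{(m-1)}$ are linearly independent a.s.

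With non-degenerate covariance, the conditional Gaussian density is positive everywhere, so any prescribed sign pattern on a subset of the $(H_{jm})_j$ has positive conditional probability $f(G^{(1)}, \ldots, G^{(m-1)})$, almost surely. By Fubini, the unconditional probability of all required sign conditions is $\mb{E}[\mbm{1}_{E'} f] > 0$, since $\mbm{1}_{E'} > 0$ on a positive-measure set where $f > 0$ a.s. This completes the inductive step.
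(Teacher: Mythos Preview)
Your proof is correct and takes a genuinely different route from the paper's.

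The paper gives an explicit construction: it assigns to each directed edge $e_\ell = jk$ the $\ell$th pair of Gaussian coordinates, forces $G_{2\ell-1}^{(j)}, G_{2\ell}^{(k)} \ge C|E(D)|^{1/2}$ and $|G_{2\ell}^{(j)}|, |G_{2\ell-1}^{(k)}| \le 1$ so that the $\ell$th term of $H_{jk}$ is large and positive, bounds all other Gaussians at non-incident vertices by $1$, and uses Chebyshev to control the tail $\ell > |E(D)|$. All of these events are independent, hence occur jointly with positive probability (quantitatively at least $\exp(-\Omega(|E(D)|^2))$), and on this event every $H_{jk}$ has the prescribed sign.

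Your argument instead proceeds by induction on $m$, conditioning on $G^{(1)},\ldots,G^{(m-1)}$ and observing that $(H_{jm})_{j<m}$ is conditionally a centered Gaussian whose covariance is the Gram matrix of the vectors $\wt{G}^{(j)}$. Showing this Gram matrix is almost surely nonsingular via the truncation to $\ell\le m-1$ and the fact that an i.i.d.\ Gaussian $(m-1)\times 2(m-1)$ matrix has full row rank is clean and correct; once the conditional law is nondegenerate, every sign pattern on the edges at vertex $m$ has positive conditional probability and Fubini closes the induction. Your approach is more conceptual and arguably more elegant, and it makes transparent why positivity holds without any bookkeeping of constants. The paper's explicit construction, on the other hand, yields a concrete quantitative lower bound on the probability and avoids any appeal to conditional distributions or Gram matrices.
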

\begin{proof}
Let $C$ be a sufficiently large constant to be chosen later. Let $|E(D)|=u$ and label the edges of the digraph $D$ by $e_1,\ldots,e_u$ and the vertices by $1,\ldots,m$. We may assume $D$ is connected so $m\le u+1$. We correspond the indices $\{2\ell-1,2\ell\}$ to directed edge $e_\ell$. For each $\ell\in[u]$ and each $i\in[m]$ which is not an endpoint of the edge $e_\ell$ we define the event $\mc{E}_{\ell,i}$:
\[\max\{|G_{2\ell-1}^{(i)}|,|G_{2\ell}^{(i)}|\}\le 1.\]
For each $\ell\in[|E(D)|]$, if $e_\ell$ is an edge directed from $j$ to $k$ then we define the event $\mc{E}_\ell$:
\[G_{2\ell-1}^{(j)},G_{2\ell}^{(k)}\ge Cu^{1/2},\qquad|G_{2\ell}^{(j)}|,|G_{2\ell-1}^{(k)}|\le 1.\]

Recall that
\[H_{jk}=\sum_{\ell\ge 1}\sigma_\ell(G_{2\ell-1}^{(j)}G_{2\ell}^{(k)}-G_{2\ell}^{(j)}G_{2\ell-1}^{(k)}).\]
Let us further define the event $\mc{E}_{\mr{tail}}$:
\[\bigg|\sum_{\ell>|E(D)|}\sigma_\ell(G_{2\ell-1}^{(j)}G_{2\ell}^{(k)}-G_{2\ell}^{(j)}G_{2\ell-1}^{(k)})\bigg|\le C\]
for all $jk\in E(D)$. By Chebyshev's inequality, for any given $j,k\in[m]$ this occurs with probability $1-O(1/(C^2u))$ Taking a union bound over $jk\in E(D)$ we see $\mb{P}[\mc{E}_{\mr{tail}}]\ge 1/2$ if $C$ is large enough.

Note that $\mc{E}_{\ell,i},\mc{E}_\ell,\mc{E}_{\mr{tail}}$ are jointly independent, so they jointly occur with positive probability (at least $\exp(-\Omega(u^2))$).

If $jk$ is directed edge $e_{\ell^\ast}$, then by construction the first $|E(D)|$ terms of $H_{jk}$ contribute $\gtrsim C^2\sigma_{\ell^\ast}u$, so $\Omega(C^2)$ by \cref{lem:operator-decay} (\cref{M8}). On the other hand, the tail $\ell>|E(D)|$ contributes at most $C$. Thus $H_{jk}>0$ if $C$ was chosen large enough. The result follows.
\end{proof}

\section{Proof of \texorpdfstring{\cref{thm:ties}}{Theorem 1.8}}\label{sec:ties-proof}
Finally we compute the probability of having a tie. We proceed in a slightly indirect manner via first considering the probability that a given \emph{coarse} die (i.e., an appropriate associated sequence is coarse in the sense of \cref{def:coarse}) ties with a randomly sampled die. This amounts to computing the chance that $T_1=T_2=0$ and $T_1=T_2=T_3=0$ given the setup of \cref{def:triple-var-setup}, which will be the first step in understanding the necessary probability.
\begin{lemma}\label{lem:conditional-mass}
Assume the setup of \cref{def:triple-var-setup} and that $(c_j)_{1\le j\le n}$ is coarse. We have
\begin{align*}
\mb{P}[T_1 = 0 \wedge T_2 = 0] &= \frac{\sqrt{3}}{\pi\mr{Var}[\Delta]n^2}+O(n^{-5/2}(\log n)^{34}),\\
\mb{P}[T_1 = 0 \wedge T_2 = 0 \wedge T_3 = 0] &= \frac{\sqrt{3}}{(2\pi\mr{Var}[\Delta])^{3/2}n^2(\min_{a,b\in\mb{R}}\sum_{j=1}^n(c_j-aj-b)^2)^{1/2}}+ O(n^{-7/2}(\log n)^{37}).
\end{align*}
\end{lemma}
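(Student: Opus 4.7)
Both estimates proceed by Fourier inversion on the integer lattice, truncation of the integration to a small central box via the bounds of \cref{sec:fourier}, and explicit evaluation of the resulting Gaussian integral.

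For the first estimate, apply \cref{thm:inversion-formula} to write
\[\mb{P}[T_1=0\wedge T_2=0]=(2\pi)^{-2}\int_{[-\pi,\pi]^2}\mb{E}[\exp(i\xi_1 T_1+i\xi_2 T_2)]\,d\xi_1\,d\xi_2.\]
Setting $\Theta_3=0$ in \cref{lem:theta1-region,lem:theta2-high,lem:theta2-mid} (whose hypotheses on $(c_j)$ become vacuous in this regime) shows that the integrand is $n^{-\omega(1)}$ outside $R=\{(\xi_1,\xi_2):|\xi_1|\le\tau_1,\,|\xi_2|\le\tau_2\}$ with $\tau_1=n^{-1/2}(\log n)^7$ and $\tau_2=n^{-3/2}(\log n)^6$. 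On $R$, \cref{lem:clt-region} (with $\Theta_3=0$) replaces the integrand with the Gaussian characteristic function $\exp(-\tfrac12\vec\xi^T\Sigma_2\vec\xi)$ at pointwise cost $O(n^{-1/2}(\log n)^{21})$, where $\Sigma_2$ is the covariance matrix of $(T_1',T_2')$; since $|R|=\Theta(n^{-2}(\log n)^{13})$, the total replacement error is $O(n^{-5/2}(\log n)^{34})$. Extending the Gaussian integral from $R$ back to $\mb{R}^2$ costs only $n^{-\omega(1)}$ by rapid decay (the quadratic form on $\partial R$ is $\gtrsim(\log n)^{12}$), leaving $(2\pi)^{-1}(\det\Sigma_2)^{-1/2}$. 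A direct computation gives $\det\Sigma_2=\mr{Var}[\Delta]^2 n^2(n^2-1)/12$, yielding $\sqrt{3}/(\pi\mr{Var}[\Delta] n^2)$ up to an $O(n^{-4})$ correction absorbed into the error.

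For the second estimate, apply \cref{thm:inversion-formula} with $d=3$; in the relevant applications $2c_j\in\mb{Z}$, so $T_3\in\mb{Z}$ and one integrates $\xi_3$ over $[-\pi,\pi]$ (the general rational case reduces by rescaling). Now \cref{lem:theta3-high,lem:theta3-mid} (invoking \cref{S5,S6}) handle $|\xi_3|\ge n^{-1}(\log n)^3$, and \cref{lem:theta1-region,lem:theta2-high,lem:theta2-mid} (invoking \cref{S1,S3}) handle large $|\xi_1|,|\xi_2|$ in the rest, so the integrand is $n^{-\omega(1)}$ outside $R'=R\times[-n^{-1}(\log n)^3,\,n^{-1}(\log n)^3]$, which has volume $\Theta(n^{-3}(\log n)^{16})$. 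Gaussian replacement via \cref{lem:clt-region} on $R'$ contributes total error $O(n^{-7/2}(\log n)^{37})$. The Gaussian tail outside $R'$ is again $n^{-\omega(1)}$: the conditional variance of $T_3'$ given $T_1',T_2'$ equals $4\mr{Var}[\Delta]\min_{a,b}\sum(c_j-aj-b)^2\gtrsim n^2/(\log n)^2$ by \cref{S4}, so at the boundary in $\xi_3$ the quadratic form exceeds $(\log n)^4$. Thus the answer equals $(2\pi)^{-3/2}(\det\Sigma_3)^{-1/2}$ up to the claimed error.

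The key algebraic identity, via the Schur complement, is
\[\det\Sigma_3=\det\Sigma_2\cdot\min_{a,b\in\mb{R}}\mr{Var}(T_3'-aT_1'-bT_2')=\det\Sigma_2\cdot 4\mr{Var}[\Delta]\min_{a,b\in\mb{R}}\sum_{j=1}^n(c_j-aj-b)^2,\]
where the second equality uses \cref{S2} (i.e., $\sum c_j=0$) so that the constant offset in the regression of $T_3'$ onto $T_1',T_2'$ does not alter the variance. Combining with the value of $\det\Sigma_2$ above yields the stated leading constant. The main technical point is calibrating $R,R'$ and the Fourier estimates of \cref{sec:fourier} so that the replacement error exactly matches the claimed $O(n^{-7/2}(\log n)^{37})$; \cref{S4} is precisely what is needed to ensure super-polynomial decay of the Gaussian tail in the $\xi_3$ direction without worsening the leading constant.
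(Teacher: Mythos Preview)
Your proof is correct and follows essentially the same route as the paper: Fourier inversion via \cref{thm:inversion-formula}, truncation to the central box using \cref{lem:theta3-high,lem:theta3-mid,lem:theta2-high,lem:theta2-mid,lem:theta1-region}, Gaussian replacement via \cref{lem:clt-region}, and explicit evaluation of the Gaussian integral. The only cosmetic difference is that you compute $\det\Sigma_3$ via the Schur complement (conditional variance), whereas the paper phrases the same calculation as a base-times-height volume of the parallelepiped spanned by the rows of $M_3$; you also explicitly flag the integrality of $T_3$ needed for Fourier inversion, which the paper leaves implicit.
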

\begin{proof}
Let 
\[\wt{T}_1 = \sum_{j=1}^n\wt{X}_j,\quad\wt{T}_2 = \sum_{j=1}^nj\wt{X}_j,\quad\wt{T}_3 = 2\sum_{j=1}^nc_j\wt{X}_j,\]
where $\wt{X}_j\sim\mc{N}(0,\mr{Var}[\Delta])$ are independent Gaussians. Let $\vec{X}\in\mb{R}^n$ be the vector with these coordinates. Define the sets
\begin{align*}
R_2&=\{(\xi_1,\xi_2)\colon|\xi_1|\le n^{-1/2}(\log n)^7,~|\xi_2|\le n^{-3/2}(\log n)^6\},\\
R_3&=\{(\xi_1,\xi_2,\xi_3)\colon|\xi_1|\le n^{-1/2}(\log n)^7,~|\xi_2|\le n^{-3/2}(\log n)^6,~|\xi_3|\le n^{-1}(\log n)^3\}.
\end{align*}
By \cref{thm:inversion-formula} and \cref{lem:theta3-high,lem:theta3-mid,lem:theta2-high,lem:theta2-mid,lem:theta1-region} we easily see
\begin{align*}
\mb{P}[T_1 = 0 \wedge T_2 = 0] &= (2\pi)^{-2}\int_{R_2}\mb{E}[\exp(i\vec{\xi}\cdot(T_1,T_2))]d\vec{\xi}\pm n^{-\omega(1)},\\
\mb{P}[T_1 = 0 \wedge T_2 = 0 \wedge T_3 = 0] &= (2\pi)^{-3}\int_{R_3}\mb{E}[\exp(i\vec{\xi}\cdot(T_1,T_2,T_3))]d\vec{\xi}\pm n^{-\omega(1)}.
\end{align*}
Then, using \cref{lem:clt-region} to transfer to Gaussians we find
\begin{align}\label{eq:conditional-mass-fourier-2d}
\mb{P}[T_1 = 0 \wedge T_2 = 0] &= (2\pi)^{-2}\int_{R_2}\mb{E}[\exp(i\vec{\xi}\cdot(\wt{T}_1,\wt{T}_2))]d\vec{\xi} + O(n^{-5/2}(\log n)^{34}),\\
\mb{P}[T_1 = 0 \wedge T_2 = 0 \wedge T_3 = 0] &= (2\pi)^{-3}\int_{R_3}\mb{E}[\exp(i\vec{\xi}\cdot(\wt{T}_1,\wt{T}_2,\wt{T}_3))]d\vec{\xi} + O(n^{-7/2}(\log n)^{37}).\label{eq:conditional-mass-fourier-3d}
\end{align}

We define the matrix $3\times n$ matrix $M_3$ via
\[M_3 := \begin{pmatrix}
1 & 1 & \ldots & 1\\
1 & 2 & \ldots & n\\
2c_1 & 2c_2 & \ldots & 2c_n
\end{pmatrix}\]
and we let $M_2$ be the first two rows of $M_3$. We have $\vec{\xi}\cdot(\wt{T}_1,\wt{T}_2,\wt{T}_3)=\vec{\xi}^TM_3\vec{X}$ and $\mb{E}[\vec{\xi}\cdot(\wt{T}_1,\wt{T}_2,\wt{T}_3)] = 0$. Furthermore note that $\mb{E}[(\vec{\xi}^TM_3\vec{X})^2] = \mr{Var}[\Delta]\cdot(\vec{\xi}^TM_3M_3^T\vec{\xi})$. We deduce
\begin{align*}
\mb{P}[T_1 = 0 \wedge T_2 = 0 \wedge T_3 = 0] &= (2\pi)^{-3}\int_{R_3}\exp(-\mr{Var}[\Delta]\cdot\xi^TM_3M_3^T\vec{\xi}/2))d\vec{\xi} + O(n^{-7/2}(\log n)^{37}),\\
\mb{P}[T_1 = 0 \wedge T_2 = 0] &= (2\pi)^{-2}\int_{R_2}\exp(-\mr{Var}[\Delta]\cdot\vec{\xi}^TM_2M_2^T\vec{\xi}/2)d\vec{\xi} + O(n^{-5/2}(\log n)^{34}).
\end{align*}
We compute
\[M_3M_3^T = \begin{pmatrix}
n & n(n+1)/2 & 0\\
n(n+1)/2 &  n(n+1)(2n+1)/6 &  2\sum_{j=1}^njc_j\\
0 & 2\sum_{j=1}^njc_j & 4\sum_{j=1}^nc_j^2
\end{pmatrix},\]
recalling \cref{S2} which implies $(1,\ldots,1)\cdot(c_1,\ldots,c_n)=0$. Also, since $M_3^T\vec{e}_1,M_3^T\vec{e}_3$ are orthogonal (denoting $\vec{e}_j\in\mb{R}^3$ as the $j$th elementary vector), we deduce
\begin{align*}
\mr{dist}(M^T\vec{e}_2,&\mr{span}_{\mb{R}}\{M^T\vec{e}_j\}_{j\in\{1,3\}})^2\\
&= \mr{dist}((-(n-1)/2,-(n-3)/2,\ldots, (n-1)/2), \mr{span}_{\mb{R}}\{(c_1,\ldots,c_n)\})^2 \\
&\gtrsim n^3\bigg(1-\frac{\sang{(-(n-1)/2,-(n-3)/2,\ldots,(n-1)/2), (c_1,\ldots,c_n)}^2}{\snorm{(-(n-1)/2,-(n-3)/2,\ldots, (n-1)/2)}_2^2\snorm{(c_1,\ldots,c_n)}_2^2}\bigg)\\
&\gtrsim n^3\cdot \frac{\min_{a,b\in\mb{R}}\sum_{j=1}^n(c_j-aj-b)^2}{\sum_{j=1}^nc_j^2}\\
&\gtrsim n^3/(\log n)^4.
\end{align*}
The second-to-last line comes from noting that the desired minimum corresponds to the distance from $(c_1,\ldots,c_n)$ to the plane spanned by $M^T\vec{e}_1,M^T\vec{e}_2$. The last line uses \cref{S1,S4}. Similarly, we find
\[\mr{dist}(M^T\vec{e}_3,\mr{span}_{\mb{R}}(\{M^T\vec{e}_j\}_{j\in\{1,2\}})^2 = \min_{a,b\in\mb{R}}\sum_{j=1}^n(c_j-aj-b)^2\ge n^2/(\log n)^2.\]
Therefore we have
\begin{align*}
\xi^TM_3M_3^T\xi & = \snorm{M_3^T\xi}_2^2\ge\max_{j^\ast\in[3]}\xi_{j^\ast}^2\mr{dist}(M^T\vec{e}_{j^\ast},\mr{span}_{\mb{R}}(\{M\vec{e}_j\}_{j\in [3]\setminus\{j^\ast\}})^2\\
&\gtrsim n\xi_1^2 + n^3\xi_2^2/(\log n)^4 + n^2\xi_3^2/(\log n)^2.
\end{align*}

This inequality immediately allows us to extend the regions of integration in \cref{eq:conditional-mass-fourier-2d,eq:conditional-mass-fourier-3d} to $\mb{R}^2$ and $\mb{R}^3$, respectively, since the integrand within the remaining region is super-polynomially small and decaying rapidly. Applying the formula for a Gaussian integral, we have
\begin{align*}
\mb{P}[T_1 = 0 \wedge T_2 = 0] &= (2\pi)^{-2}\int_{\mb{R}^2}\exp(-\mr{Var}[\Delta]\cdot\vec{\xi}^TM_2M_2^T\vec{\xi}/2)d\vec{\xi} + O(n^{-5/2}(\log n)^{34})\\
&= (2\pi)^{-1}(\det M_2M_2^T)^{-1/2}+ O(n^{-5/2}(\log n)^{34})\\
& = \frac{\sqrt{3}}{\pi\mr{Var}[\Delta]n^2} + O(n^{-5/2}(\log n)^{34}).
\end{align*}
We computed the determinant explicitly as $n^2(n^2-1)/12$ using the expression for $M_3M_3^T$. Similarly, we have
\begin{align*}
\mb{P}[T_1 = 0 \wedge T_2 = 0 \wedge T_3 = 0] &= (2\pi)^{-3}\int_{\mb{R}^3}\exp(-\mr{Var}[\Delta]\cdot\vec{\xi}^TM_3M_3^T\vec{\xi}/2)d\vec{x} + O(n^{-7/2}(\log n)^{37})\\
&= (2\pi\mr{Var}[\Delta])^{-3/2}\det(M_3M_3^T)^{-1/2}+ O(n^{-7/2}(\log n)^{37})\\
& = \frac{\sqrt{3}}{(2\pi\mr{Var}[\Delta])^{3/2}n^2(\min_{a,b\in\mb{R}}\sum_{j=1}^n(c_j-aj-b)^2)^{1/2}}+ O(n^{-7/2}(\log n)^{37}).
\end{align*}
In the last line we used the base times height formula row-by-row to compute $(\det M_3M_3^T)^{1/2}$, which can be interpreted as the $3$-dimensional volume of the corresponding parallelepiped spanned by $M_3^T\vec{e}_1,M_3^T\vec{e}_2,M_3^T\vec{e}_3$ within $\mb{R}^n$.
\end{proof}

Given \cref{lem:conditional-mass}, and recalling \cref{lem:crucial}, the approach will now be to take an average of $\mb{P}[T_3=0|T_1=T_2=0]$ over the distribution of coarse sequences $(c_j)_{1\le j\le n}$ that come from the frequency count statistics of a typical die sampled from either model. This requires us to understand the quadratic expressions $\min_{a,b\in\mb{R}}\sum_{j=1}^n(c_j-aj-b)^2$ (where $c_j$ will linearly depend on the frequency count statistics) conditional on stuff such as $T_1=T_2=0$. At a high level, we will reduce understanding a quadratic form to understanding finitely many linear forms jointly (via sampling random rows to take a dot product against; heuristically, one could study the large singular vectors). Thus we will reduce to a situation where we only need the sort of Fourier control guaranteed by \cref{sec:fourier}. Specifically, we can prove \cref{lem:fourier-coeff} given the tools in \cref{sec:fourier}, which is the key estimate. Beyond this, we need various tools to control certain tail estimates and related notions that occur in the course of the proof, which is very much related to the fact that evaluating $\mb{E}(\sum_{\ell\ge 1}\sigma_\ell^2(Z_\ell^2+Z_\ell'^2))^{-1/2}$ involves a (convergent) improper integral.

We will require the following estimate regarding sampling independent points for a given distribution on $[n]$. We use the following estimate on sums of independent random variables from \cite{vE65}.
\begin{theorem}[{\cite[Theorem~4]{vE65}}]\label{thm:estimate-good}
Fix $\beta\in [1,2]$. There exists $C_{\ref{thm:estimate-good}}(\beta)>0$ such that the following holds. Let $X_i$ be independent mean zero random variables with $\mb{E}[|X_i|^\beta]<\infty$. We have that 
\[\mb{E}[|\sum_{i=1}^{n}X_i|^{\beta}]\le C_{\ref{thm:estimate-good}}(\beta)\sum_{i=1}^{n}\mb{E}[|X_i|^{\beta}]\]
\end{theorem}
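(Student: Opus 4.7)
The plan is to prove the bound by direct induction on $n$ after establishing a scalar comparison inequality; the mean-zero hypothesis will then kill the only nontrivial cross term in expectation. This achieves the clean constant $C_{\ref{thm:estimate-good}}(\beta)=2$.

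The key lemma I would establish is the pointwise inequality: for all $x,y\in\mb{R}$ and $\beta\in[1,2]$,
\[|x+y|^\beta\;\le\;|x|^\beta\;+\;\beta\operatorname{sgn}(x)|x|^{\beta-1}y\;+\;2|y|^\beta.\]
By homogeneity (scaling $x$ and $y$ together) and the trivial case $x=0$, this reduces to the one-variable statement $|1+t|^\beta\le 1+\beta t+2|t|^\beta$ for $t\in\mb{R}$. For $t\ge 0$ both sides agree at $t=0$, so one differentiates to reduce it to $(1+t)^{\beta-1}\le 1+2t^{\beta-1}$, which is immediate from subadditivity of the concave map $s\mapsto s^{\beta-1}$ on $[0,\infty)$ since $\beta-1\in[0,1]$. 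For $t\in[-1,0]$ the inequality $(1-s)^\beta+\beta s-1\le 2s^\beta$ will follow from a second-order Taylor expansion, using $s^2\le s^\beta$ on $[0,1]$ and $\beta(\beta-1)/2\le 1$. Finally, for $t<-1$ with $s:=|t|\ge 1$, the crude bound $(s-1)^\beta\le s^\beta$ reduces the claim to $s^\beta-\beta s+1\ge 0$, which is obvious by differentiation once.

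With this lemma in hand, set $S_0:=0$ and $S_k:=\sum_{i\le k}X_i$. Applying the lemma pointwise with $x=S_{k-1}$ and $y=X_k$, then taking expectations and using that $X_k$ is independent of $S_{k-1}$ with $\mb{E}[X_k]=0$, gives
\[\mb{E}|S_k|^\beta\;\le\;\mb{E}|S_{k-1}|^\beta\;+\;\beta\,\mb{E}\bigl[\operatorname{sgn}(S_{k-1})|S_{k-1}|^{\beta-1}\bigr]\cdot\mb{E}[X_k]\;+\;2\,\mb{E}|X_k|^\beta\;=\;\mb{E}|S_{k-1}|^\beta+2\,\mb{E}|X_k|^\beta.\]
Iterating from $S_0=0$ then yields $\mb{E}|S_n|^\beta\le 2\sum_{i=1}^n\mb{E}|X_i|^\beta$, so $C_{\ref{thm:estimate-good}}(\beta)=2$ suffices.

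The only nontrivial ingredient is the scalar inequality above; once it is in place, the vanishing of the linear-in-$y$ term by independence makes the induction automatic. One could alternatively symmetrize the $X_i$ and apply Khintchine's inequality to $\sum\epsilon_i|X_i-X_i'|$, using $\|a\|_2\le\|a\|_\beta$ for $\beta\le 2$, but this yields a worse constant and invokes additional machinery. The main obstacle is therefore purely an elementary real-variable check, not any probabilistic subtlety.
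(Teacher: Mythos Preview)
The paper does not prove this statement; it is simply quoted from \cite[Theorem~4]{vE65}. Your strategy --- the pointwise scalar inequality $|x+y|^\beta\le|x|^\beta+\beta\operatorname{sgn}(x)|x|^{\beta-1}y+2|y|^\beta$ followed by induction on $n$, with the linear cross term vanishing by independence and $\mb{E}[X_k]=0$ --- is essentially the original von Bahr--Esseen argument, and the induction step as well as the subcases $t\ge 0$ and $t<-1$ of the scalar lemma are correct as written.

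There is, however, a gap in your treatment of the subcase $t\in[-1,0]$, i.e.\ the claim $(1-s)^\beta+\beta s-1\le 2s^\beta$ for $s\in[0,1]$. You appeal to ``a second-order Taylor expansion, using $s^2\le s^\beta$ and $\beta(\beta-1)/2\le 1$'', but the Lagrange remainder of $(1-s)^\beta$ at $0$ is $\tfrac{\beta(\beta-1)}{2}(1-\xi)^{\beta-2}s^2$ with $(1-\xi)^{\beta-2}\ge 1$ since $\beta-2\le 0$, so Taylor gives $(1-s)^\beta\ge 1-\beta s+\tfrac{\beta(\beta-1)}{2}s^2$, the wrong direction. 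The inequality is nonetheless true; one clean fix is to set $\phi(s)=2s^\beta-(1-s)^\beta-\beta s+1$ and verify $\phi'(s)=\beta\bigl(2s^{\beta-1}+(1-s)^{\beta-1}-1\bigr)\ge 0$ on $[0,1]$. The bracketed function $\psi(s)$ has $\psi(0)=0$, $\psi(1)=1$, and for $\beta\in(1,2)$ one checks $\psi''(s)=(\beta-1)(\beta-2)\bigl(2s^{\beta-3}+(1-s)^{\beta-3}\bigr)<0$, so $\psi$ is concave and hence its minimum on $[0,1]$ is at an endpoint, giving $\psi\ge 0$; the endpoint cases $\beta\in\{1,2\}$ are immediate. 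With this correction your proof yields $C(\beta)=2$.
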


\begin{lemma}\label{lem:resample-estimate}
There exists a constant $C_{\ref{lem:resample-estimate}}>0$ such that the following holds. Given a sequence $x_1,\ldots,x_n$, let $i_1,\ldots,i_M$ be indices chosen uniformly at random from $[n]$. Then 
\[\bigg|\frac{1}{M}\sum_{j=1}^Mx_{i_j} - \frac{1}{n}\sum_{j=1}^nx_j\bigg|\le C_{\ref{lem:resample-estimate}}M^{-1/4}\bigg(\frac{\sum_{j=1}^n|x_j|^{3/2}}{n}\bigg)^{2/3}\]
occurs with probability at least $1-M^{-1/8}$.
\end{lemma}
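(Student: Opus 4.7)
The plan is to center the random variables and apply the von Bahr--Esseen moment inequality (\cref{thm:estimate-good}) with exponent $\beta = 3/2$, then convert the resulting moment bound into a tail bound via Markov's inequality. Assume the indices $i_1, \ldots, i_M$ are sampled independently and uniformly from $[n]$, which is the natural interpretation since the lemma is applied in the context of estimating averages over coordinates. Set $\mu = \frac{1}{n}\sum_{j=1}^n x_j$ and define the i.i.d.\ mean-zero random variables $Y_j = x_{i_j} - \mu$ for $1 \le j \le M$, so that $\frac{1}{M}\sum_{j=1}^M x_{i_j} - \mu = \frac{1}{M}\sum_{j=1}^M Y_j$.

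First I would bound $\mb{E}[|Y_1|^{3/2}]$. We have $\mb{E}[|Y_1|^{3/2}] = \frac{1}{n}\sum_{k=1}^n |x_k - \mu|^{3/2}$, and by convexity of $t \mapsto |t|^{3/2}$ together with the elementary inequality $|a-b|^{3/2} \lesssim |a|^{3/2} + |b|^{3/2}$,
\[
\mb{E}[|Y_1|^{3/2}] \lesssim \frac{1}{n}\sum_{k=1}^n |x_k|^{3/2} + |\mu|^{3/2} \lesssim \frac{1}{n}\sum_{k=1}^n |x_k|^{3/2},
\]
where the final inequality uses Jensen's inequality ($|\mu|^{3/2} \le \frac{1}{n}\sum_k |x_k|^{3/2}$).

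Next I would apply \cref{thm:estimate-good} with $\beta = 3/2$ to the mean-zero independent variables $Y_1, \ldots, Y_M$, yielding
\[
\mb{E}\bigg[\bigg|\sum_{j=1}^M Y_j\bigg|^{3/2}\bigg] \le C_{\ref{thm:estimate-good}}(3/2) \cdot M \cdot \mb{E}[|Y_1|^{3/2}] \lesssim M \cdot \frac{1}{n}\sum_{k=1}^n |x_k|^{3/2}.
\]
Dividing by $M^{3/2}$ gives $\mb{E}\big[\big|\frac{1}{M}\sum_{j=1}^M Y_j\big|^{3/2}\big] \lesssim M^{-1/2} \cdot \frac{1}{n}\sum_k |x_k|^{3/2}$.

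Finally, Markov's inequality applied with threshold $M^{1/8}$ times this expectation shows that with probability at least $1 - M^{-1/8}$,
\[
\bigg|\frac{1}{M}\sum_{j=1}^M Y_j\bigg|^{3/2} \lesssim M^{-1/2} \cdot M^{1/8} \cdot \frac{1}{n}\sum_{k=1}^n |x_k|^{3/2} = M^{-3/8} \cdot \frac{1}{n}\sum_{k=1}^n |x_k|^{3/2}.
\]
Raising both sides to the $2/3$ power produces exactly the desired bound $\lesssim M^{-1/4}\big(\frac{1}{n}\sum_k |x_k|^{3/2}\big)^{2/3}$. There is no genuine obstacle here; the only mild subtlety is correctly invoking \cref{thm:estimate-good} at the fractional exponent $\beta=3/2$ (so that $\mb{E}[|Y_1|^{3/2}] < \infty$ is automatic), and tracking that the constants produced are absolute.
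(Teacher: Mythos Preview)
Your proof is correct and follows essentially the same approach as the paper: apply \cref{thm:estimate-good} with $\beta=3/2$ to the centered variables, bound the centered $3/2$-moment by the uncentered one, and finish with Markov's inequality. The only cosmetic difference is that the paper uses Jensen to replace $\mu$ by an independent copy $x_{i_2}$ before applying $|x-y|^{3/2}\lesssim|x|^{3/2}+|y|^{3/2}$, whereas you apply the latter first and then Jensen to $|\mu|^{3/2}$; the two orderings yield the same bound.
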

\begin{proof}
We have that 
\begin{align*}
\mb{E}\bigg|\frac{1}{M}\sum_{j=1}^Mx_{i_j} - \frac{1}{n}\sum_{j=1}^nx_j\bigg|^{3/2}& = M^{-3/2} \mb{E}\bigg|\sum_{j=1}^M\bigg(x_{i_j} - \frac{1}{n}\sum_{j=1}^nx_j\bigg)\bigg|^{3/2}\\
&\lesssim M^{-1/2}\cdot \mb{E}\bigg|x_{i_1} - \frac{1}{n}\sum_{j=1}^nx_j\bigg|^{3/2}\\
&\lesssim M^{-1/2}\cdot \mb{E}|x_{i_1} - x_{i_2}|^{3/2}\lesssim M^{-1/2}\cdot \mb{E}|x_{i_1}|^{3/2} \\
&= n^{-1}M^{-1/2}\sum_{i=1}^{n}|x_i|^{3/2}
\end{align*}
where we have used \cref{thm:estimate-good}, Jensen's inequality, and that $|x-y|^{3/2}\le 2^{1/2}(|x|^{3/2}+|y|^{3/2})$. The desired follows immediately by Markov's inequality.
\end{proof}

We next verify the following key distributional identity. We will use it after applying the identity $\min_{a,b\in\mb{R}}\sum_{j}(c_j-a-bj)^2 = \snorm{M_n^\ast\vec{c}}_2^2$ where $\vec{c}=(c_1,\ldots,c_n)$.
\begin{lemma}\label{lem:fourier-coeff}
Let $X_j$, $T_1,T_2$ be as in \cref{def:triple-var-setup}, sample $\wt{X}_j\sim\mc{N}(0,\mr{Var}[\Delta])$ for $1\le j\le n$, let $X = (X_1,\ldots,X_n)$, and let $\wt{X} = (\wt{X}_1,\ldots,\wt{X}_n)$. If $M\le\log\log n$ and $j_k\in[n]$ and $|\Theta_k|\le(\log n)^{7/4}$ for $1\le k\le M$ then
\[\bigg|\mb{E}\bigg[\exp\bigg(i\sum_{k=1}^M\frac{\Theta_k\sang{X,M_n^\ast\vec{e}_{j_k}}}{\sqrt{n}}\bigg)\bigg|T_1 = T_2 = 0\bigg] - \mb{E}\bigg[\exp\bigg(i\sum_{k=1}^M\frac{\Theta_k\sang{\wt{X},M_n^\ast\vec{e}_{j_k}}}{\sqrt{n}}\bigg)\bigg]\bigg|\le n^{-1/2}(\log n)^{39}.\]  
\end{lemma}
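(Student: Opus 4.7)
The plan is to combine Fourier inversion on the lattice condition $\{T_1 = T_2 = 0\}$ with a Lindeberg exchange to Gaussians, in the style of the proof of \cref{lem:conditional-mass}, but with the extra parameter $S := \sum_{k=1}^M \Theta_k\langle X, M_n^\ast \vec{e}_{j_k}\rangle/\sqrt{n} = \sum_{j=1}^n\alpha_j X_j$ (with $\alpha_j := \sum_{k=1}^M \Theta_k (M_n^\ast)_{jj_k}/\sqrt{n}$) playing the role of $\Theta_3 T_3$. Writing $\wt{S}$ for the Gaussian analogue in the $\wt{X}_j$, the claim reduces to showing
\[\mb{E}[e^{iS}\mbm{1}_{T_1=T_2=0}] = \mb{P}[T_1=T_2=0]\cdot \mb{E}[e^{i\wt{S}}] + O(n^{-5/2}(\log n)^{34}),\]
after which dividing by $\mb{P}[T_1=T_2=0] = \Theta(n^{-2})$, computed via the first (coefficient-sequence-free) half of \cref{lem:conditional-mass}, delivers the claimed bound with room to spare for extra log factors.

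Two structural facts about $(\alpha_j)$ drive everything. First, by \cref{M3} and the hypotheses $M\le\log\log n$, $|\Theta_k|\le (\log n)^{7/4}$, we have $|\alpha_j|\lesssim (\log n)^{7/4+o(1)}/\sqrt{n}$, and \cref{M7} yields similarly tiny bounds on $|\alpha_{j+1}-\alpha_j|$ and $|\alpha_{n-j}-\alpha_j|$ outside $O(M)$ exceptional indices. Second, $\sum_j \alpha_j = 0$ and $\sum_j j\alpha_j = 0$, because $\vec{1}$ and $(1,2,\ldots,n)$ both lie in $\ker(M_n^{\ast T})$ by \cref{def:dice-matrix} together with skew-symmetry (\cref{M2}). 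The first fact lets us reuse the switching estimates of \cref{sub:fourier-estimates} for $(\alpha_j/2)$ in the role of $c_j$; the second will decouple $\wt{S}$ from $(\wt{T}_1, \wt{T}_2)$ under Gaussian integration.

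Concretely, I apply \cref{thm:inversion-formula} to write $\mb{E}[e^{iS}\mbm{1}_{T_1=T_2=0}]$ as $(2\pi)^{-2}$ times the $[-\pi,\pi]^2$-integral of $\mb{E}[\exp(iS + i\xi_1 T_1+i\xi_2 T_2)]$. On the complement of $R_2 := \{|\xi_1|\le n^{-1/2}(\log n)^7,\ |\xi_2|\le n^{-3/2}(\log n)^6\}$, I argue the integrand is $n^{-\omega(1)}$ by applying the pair-switching arguments of \cref{lem:theta2-high,lem:theta2-mid,lem:theta1-region} to the sequence $(\alpha_j/2)$ with effective $\Theta_3 = 1$, using that in each subregime the main term ($\xi_2$ for consecutive switches, $(n-2j)\xi_2$ for antipodal switches, or $\xi_1$ for the pointwise argument) dominates the corresponding $\alpha_j$-contribution by the bounds above. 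On $R_2$ the exponent is linear in $X_j$ with each coefficient $|\xi_1 + j\xi_2 + \alpha_j| = O(n^{-1/2}(\log n)^7)$, so the Lindeberg argument underlying \cref{lem:clt-region} transfers each pointwise Fourier value to its Gaussian analogue with pointwise error $O(n^{-1/2}(\log n)^{21})$; integrating over $R_2$ (of volume $\Theta(n^{-2}(\log n)^{13})$) gives total error $O(n^{-5/2}(\log n)^{34})$. The resulting Gaussian integral extends freely to all of $\mb{R}^2$ (the tail decays super-polynomially), and by $\mr{Cov}(\wt{S},\wt{T}_a) = \mr{Var}[\Delta]\sum_j j^{a-1}\alpha_j = 0$ for $a\in\{1,2\}$, $\wt{S}$ is jointly independent of $(\wt{T}_1,\wt{T}_2)$. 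Hence the integral factors as $\mb{E}[e^{i\wt{S}}]\cdot f_{(\wt{T}_1,\wt{T}_2)}(0,0) = \mb{E}[e^{i\wt{S}}]\cdot\frac{\sqrt{3}}{\pi\mr{Var}[\Delta]n^2}$, which matches $\mb{E}[e^{i\wt{S}}]\cdot\mb{P}[T_1=T_2=0]$ within the tolerated error by \cref{lem:conditional-mass}.

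The main subtlety lies in the Fourier decay outside $R_2$: the estimates of \cref{sub:fourier-estimates} are stated only for $|\Theta_3|\le n^{-1}(\log n)^3$, whereas my application has effective $\Theta_3 = 1$. The resolution is that those proofs only need $\Theta_3\cdot|c_{j_1}-c_{j_2}|$ (and the ambient $\Theta_3\cdot|c_j|$) to be dominated by the main $\xi_1$- or $\xi_2$-term in each regime, and here both $\Theta_3\cdot|\alpha_{j_1}-\alpha_{j_2}|/2$ and $\Theta_3\cdot|\alpha_j|/2$ are already minuscule by \cref{M3,M7}, so this is a short bookkeeping adaptation rather than a genuine obstacle.
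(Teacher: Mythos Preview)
Your approach is essentially the same as the paper's: Fourier inversion on $\{T_1=T_2=0\}$, kill the integrand outside a small box via the pair-switching lemmas, Lindeberg-swap to Gaussians on the box, then factor using the orthogonality $\sum_j\alpha_j=\sum_jj\alpha_j=0$ and compare to \cref{lem:conditional-mass}. The structural facts you identify about $(\alpha_j)$ are exactly the right ones.

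There is one point where your write-up is slightly inaccurate and the paper handles differently. In your final paragraph you assert that $|\alpha_{j_1}-\alpha_{j_2}|$ is ``already minuscule by \cref{M3,M7}'', but \cref{M7} explicitly excludes $i\in[j,k]$: when $j$ crosses one of the sampled indices $j_k$, the sign term in \cref{M1} contributes a jump of order $(\log n)^{7/4}/\sqrt{n}$ to $\alpha_j$, which is \emph{larger} than the main $\xi_2$-term in the regime of \cref{lem:theta2-high}. You flagged this earlier (``outside $O(M)$ exceptional indices'') but then dropped it. The fix is easy---since $M\le\log\log n$, one can excise those few indices from the switching set $J$ in each of \cref{lem:theta2-high,lem:theta2-mid,lem:theta1-region} and still retain $|J|=\Omega(n)$ (or $\Omega(y)\ge(\log n)^2$ in \cref{lem:theta2-mid}). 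The paper instead chooses a single consecutive block $\{n_1+1,\ldots,n_2\}\subseteq[n]$ of length $\ge n/(2M)$ containing none of the $j_k$, conditions on $X$ outside the block, and applies the lemmas verbatim with $n$ replaced by $n'=n_2-n_1$; this costs a few extra $\log$ factors in the box size (hence the $(\log n)^{39}$ in the statement) but lets one cite the lemmas as black boxes rather than re-run their proofs. Either route is fine.
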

\begin{proof}
The proof is very similar to the first part of the proof of \cref{thm:main} as well as the proof of \cref{lem:conditional-mass}, so we will be brief and focus only on the necessary modifications from the basic proof strategy.

We apply \cref{thm:inversion-formula} to deduce
\[\mb{E}\bigg[\mbm{1}_{T_1=T_2=0}\exp\bigg(i\sum_{k=1}^M\frac{\Theta_k\sang{X,M_n^\ast\vec{e}_{j_k}}}{\sqrt{n}}\bigg)\bigg]=\frac{1}{2\pi}\int_{[-\pi,\pi]^2}\mb{E}\exp\bigg(i\sum_{k=1}^M\frac{\Theta_k\sang{X,M_n^\ast\vec{e}_{j_k}}}{\sqrt{n}}+i\vec{\xi}\cdot(T_1,T_2)\bigg)d\vec{\xi}\]
and then apply \cref{lem:theta2-high,lem:theta2-mid,lem:theta1-region}. In order to apply these, we define
\[c_j^\ast:=n\sum_{k=1}^M\frac{\Theta_k(M_n^\ast \vec{e}_{j_k})_j}{2\sqrt{n}}\]
so that
\[\sum_{k=1}^M\frac{\Theta_k\sang{X,M_n^\ast\vec{e}_{j_k}}}{\sqrt{n}}=\frac{2\sum_{j=1}^nc_j^\ast X_j}{n}.\]
However, $(c_j^\ast)_{1\le j\le n}$ does not quite satisfy \cref{S3} so we cannot apply the lemmas directly; the only obstruction is that $c_j^\ast$ will have slight ``local jumps'' near $j\in\{j_1,\ldots,j_M\}$ due to the fact that $M_n^\ast$ has slightly ``discontinuous'' entries along the diagonal. Indeed, one can check
\[|c_j^\ast-c_{j'}^\ast|\lesssim\frac{|j-j'|}{\sqrt{n}}(\log n)^{7/4}\log\log n+\sum_{k=1}^M\mbm{1}_{j_k\in[j,j']}\]
for all $1\le j<j'\le n$ due to \cref{M7}. This is at most $\sqrt{|j-j'|}(\log n)^2$ if $\{j_1,\ldots,j_M\}\cap[j,j']=\emptyset$.

To fix this obstruction, we simply apply \cref{lem:theta2-high,lem:theta2-mid,lem:theta1-region} to a consecutive sequence of entries. Note that there is some $\{n_1+1,n_1+2,\ldots,n_2\}\subseteq[n]$ with $|n_2-n_1|\ge n/(2M)$ with no $j_k$ contained in this consecutive range. Let $n' = |n_2-n_1|$. Therefore, we may condition on values of $X_{[n]\setminus\{n_1+1,\ldots,n_2\}}$ and then deduce the necessary estimate from the randomness of $X_{\{n_1+1,\ldots,n_2\}}$. We deduce
\[\bigg|\mb{E}\exp\bigg(i\sum_{k=1}^M\frac{\Theta_k\sang{X,M_n^\ast\vec{e}_{j_k}}}{\sqrt{n}}+i\vec{\xi}\cdot(T_1,T_2)\bigg)\bigg|\le n^{-\omega(1)}\]
as long as $|\xi_2|\in[(n')^{-3/2}(\log n')^6,\pi]$ or $|\xi_2|\le (n')^{-3/2}(\log n')^6$ and $|\xi_1 + n_1\xi_2|\in [(n')^{-1/2}(\log n')^7,5\pi/4]$ (where one applies \cref{lem:theta2-high,lem:theta2-mid,lem:theta1-region}). This trivially covers all $(\xi_1,\xi_2)$ except for say $|\xi_1|\le n^{-1/2}(\log n)^8$ and $|\xi_2|\le n^{-3/2}(\log n)^7$.

Then we can apply \cref{lem:clt-region} (technically, since $\xi_1,\xi_2$ could be slightly larger than the range considered in \cref{lem:clt-region}, it is slightly different but the exact same technique applies). Overall, we deduce
\begin{align*}
&\mb{E}\bigg[\mbm{1}_{T_1=T_2=0}\exp\bigg(i\sum_{k=1}^M\frac{\Theta_k\sang{X,M_n^\ast\vec{e}_{j_k}}}{\sqrt{n}}\bigg)\bigg]\\
&\qquad=\frac{1}{2\pi}\int_{-\tau_1}^{\tau_1}\int_{-\tau_2}^{\tau_2}\mb{E}\exp\bigg(i\sum_{k=1}^M\frac{\Theta_k\sang{\wt{X},M_n^\ast\vec{e}_{j_k}}}{\sqrt{n}}+i\vec{\xi}\cdot(\wt{T}_1,\wt{T}_2)\bigg)d\xi_2d\xi_1 + O(n^{-5/2}(\log n)^{39})
\end{align*}
where $\tau_1=n^{-1/2}(\log n)^8$ and $\tau_2=n^{-3/2}(\log n)^7$ and $\wt{T}_1=\sum_{j=1}^n\wt{X}_j$ and $\wt{T}_2=\sum_{j=1}^nj\wt{X}_j$.

Next, use that $\sum_{j=1}^nc_j^\ast=\sum_{j=1}^njc_j^\ast=0$, which follows from \cref{def:dice-matrix}. This implies that $(\wt{T}_1,\wt{T}_2)$ is independent of the first part of the sum in the exponential. So, we can factor and integrate over $\xi_1,\xi_2$ to deduce
\[\mb{E}\bigg[\mbm{1}_{T_1=T_2=0}\exp\bigg(i\sum_{k=1}^M\frac{\Theta_k\sang{X,M_n^\ast\vec{e}_{j_k}}}{\sqrt{n}}\bigg)\bigg]=q\mb{E}\exp\bigg(i\sum_{k=1}^M\frac{\Theta_k\sang{\wt{X},M_n^\ast\vec{e}_{j_k}}}{\sqrt{n}}\bigg)+O(n^{-5/2}(\log n)^{39})\]
where $q=(2\pi)^{-1}\int_{\mb{R}^2}\mb{E}\exp(i\vec{\xi}\cdot(\wt{T}_1,\wt{T}_2))d\vec{\xi}$ (note completing the integral to $\infty$ does not change the error term). Comparing with the proof of \cref{lem:conditional-mass}, we easily deduce that $q=(1+O(n^{-1/2}(\log n)^{34}))\mb{P}[T_1=0\wedge T_2=0]$ and thus also $q=\Omega(n^{-2})$. Dividing by $\mb{P}[T_1=0\wedge T_2=0]$ and subtracting, we deduce the desired result.
\end{proof}

We deduce an appropriate bound on a moment of $n^{-1/2}|\sang{\vec{x},M_n^\ast \vec{e}_j}|$.
\begin{lemma}\label{lem:upper-tail}
Let $X_j$, $T_1, T_2$ be as in \cref{def:triple-var-setup} and let $X = (X_1,\ldots,X_n)$. We have
\[\mb{E}\bigg[\sum_{j=1}^n\bigg(\frac{|\sang{X,M_n^\ast \vec{e}_j}|}{\sqrt{n}}\bigg)^{3}\bigg|T_1 = T_2 = 0\bigg]\le C_{\ref{lem:upper-tail}}n.\]
Similarly if $X_j'$ is as in \cref{lem:clt-region}, we have 
\[\mb{E}\bigg[\sum_{j=1}^n\bigg(\frac{|\sang{X',M_n^\ast \vec{e}_j}|}{\sqrt{n}}\bigg)^3\bigg]\le C_{\ref{lem:upper-tail}}n.\]
\end{lemma}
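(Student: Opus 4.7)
The plan is to treat the unconditional Gaussian estimate (part~2) directly and reduce the conditional discrete estimate (part~1) to a fourth-moment comparison with the Gaussian analog.

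For part~2, each $\sang{X',M_n^\ast\vec{e}_j}$ is a centered Gaussian with variance $\mr{Var}[\Delta]\cdot\snorm{M_n^\ast\vec{e}_j}_2^2$, so $\mb{E}|\sang{X',M_n^\ast\vec{e}_j}|^3\lesssim\snorm{M_n^\ast\vec{e}_j}_2^3$. Combining $\max_j\snorm{M_n^\ast\vec{e}_j}_2\lesssim\sqrt{n}$ from \cref{M4} with $\sum_j\snorm{M_n^\ast\vec{e}_j}_2^2=\snorm{M_n^\ast}_F^2\lesssim n^2$ from \cref{M5},
\[
\sum_{j=1}^n\snorm{M_n^\ast\vec{e}_j}_2^3\le\max_j\snorm{M_n^\ast\vec{e}_j}_2\cdot\snorm{M_n^\ast}_F^2\lesssim n^{5/2},
\]
and dividing by $n^{3/2}$ yields the $O(n)$ bound.

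For part~1, set $S_j:=\sang{X,M_n^\ast\vec{e}_j}/\sqrt{n}=\sum_{i=1}^n a_i(X_i-1)/\sqrt{n}$ with $a_i:=(M_n^\ast)_{ij}$, and let $v_j:=\mr{Var}[\Delta]\sum_i a_i^2/n$, which satisfies $v_j\lesssim 1$ by \cref{M4} and $\sum_j v_j\lesssim n$ by \cref{M5}. Since $\vec{v}_1^TM_n^\ast=\vec{v}_2^TM_n^\ast=0$ from \cref{def:dice-matrix}, we have $\sum_i a_i=\sum_i ia_i=0$, so $S_j$ depends on $X$ only through its component transverse to $\mr{span}\{\vec{v}_1,\vec{v}_2\}$, which in the Gaussian analog is independent of $(T_1',T_2')$. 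I plan to prove the fourth-moment bound $\mb{E}[S_j^4\mid\mc{E}]\lesssim v_j^2$ uniformly in $j$; H\"older's inequality then gives $\mb{E}[|S_j|^3\mid\mc{E}]\le\mb{E}[S_j^4\mid\mc{E}]^{3/4}\lesssim v_j^{3/2}\le v_j$ (using $v_j\lesssim 1$), and summing $\sum_j v_j\lesssim n$ completes the proof.

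To obtain $\mb{E}[S_j^4\mid\mc{E}]\lesssim v_j^2$, apply Fourier inversion (\cref{thm:inversion-formula}):
\[
\mb{E}[S_j^4\mathbf{1}_{\mc{E}}]=(2\pi)^{-2}\int_{[-\pi,\pi]^2}\mb{E}[S_j^4\exp(i\vec\xi\cdot(T_1,T_2))]\,d\vec\xi,
\]
compared to the analogous Gaussian integrand and divided by $\mb{P}[\mc{E}]\asymp n^{-2}$, which holds by the two-dimensional case of \cref{lem:conditional-mass} (whose proof uses only Sections~\ref{sec:count} and~\ref{sec:fourier}). Outside the bulk $\{|\xi_1|\le n^{-1/2}(\log n)^7,\ |\xi_2|\le n^{-3/2}(\log n)^6\}$, the Fourier estimates \cref{lem:theta2-high,lem:theta2-mid,lem:theta1-region} (specialized to $\Theta_3=0$, where they impose no condition on $(a_i)_i$) give super-polynomial decay of $\mb{E}\exp(i\vec\xi\cdot(T_1,T_2))$; combined via Cauchy--Schwarz with a hypercontractive $L^2$ bound on $S_j^4$ (\cref{thm:concentration-hypercontractivity}), this contribution is negligible. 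Inside the bulk, I plan to run a Lindeberg exchange in the style of \cref{lem:clt-region} on the test function $S_j^4\exp(i\vec\xi\cdot(T_1,T_2))$, replacing each $X_i-1$ by a Gaussian of matching first two moments at per-swap cost bounded by $|a_i|^3/n^{3/2}$ times polynomial-exponential moments, with total error $\lesssim v_j^2 n^{-1/2}(\log n)^{O(1)}$. In the Gaussian model $S_j'$ is exactly independent of $(T_1',T_2')$, so the integrand factorizes as $3v_j^2\cdot\mb{E}\exp(i\vec\xi\cdot(T_1',T_2'))$; extending the integral to $\mb{R}^2$ at super-polynomial cost gives $3v_j^2\cdot\rho_{(T_1',T_2')}(0)\asymp v_j^2\mb{P}[\mc{E}]$. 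The main obstacle is the Lindeberg step with an unbounded polynomial test function (for which \cref{thm:invariance} does not apply as a black box); I plan to execute the Taylor expansion of the exponential factor explicitly, controlling the resulting polynomial moments via hypercontractivity and the coefficient bounds $|a_i|\lesssim 1$ and $\sum_i a_i^2\lesssim n$ from \cref{M3,M4}.
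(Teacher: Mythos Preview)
Your Part~2 argument is correct and essentially the same as the paper's (the paper simply cites \cref{M3}, but what you wrote is the natural unpacking).

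For Part~1 your route is genuinely different from the paper's. You aim for $\mb{E}[S_j^4\mid\mc{E}]\lesssim v_j^2$ via Fourier inversion plus a Lindeberg exchange with an \emph{unbounded} polynomial weight. The paper instead truncates at level $\sqrt{n\log n}\log\log n$ using Bernstein (\cref{thm:bernstein}) in the unconditional model, transfers the tail bound to the conditional model via $\mb{P}[\mc{E}]\gtrsim n^{-2}$, and then uses a one-dimensional Kolmogorov-distance comparison (\cref{thm:fourier-convert} together with the Fourier input \cref{lem:fourier-coeff} at $M=1$) followed by integration by parts to swap the truncated third moment for its Gaussian counterpart. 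This keeps all test functions bounded, so neither the outside-bulk nor the Lindeberg step involves a polynomial weight.

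There is one concrete gap in your outline. For $\vec\xi$ outside the bulk you write that the decay of $\mb{E}\exp(i\vec\xi\cdot(T_1,T_2))$ ``combined via Cauchy--Schwarz with a hypercontractive $L^2$ bound on $S_j^4$'' controls $\mb{E}[S_j^4\exp(i\vec\xi\cdot T)]$. But Cauchy--Schwarz on the expectation gives only $\snorm{S_j^4}_2\cdot\snorm{\exp(i\vec\xi\cdot T)}_2=\snorm{S_j^4}_2$, since $|\exp(i\vec\xi\cdot T)|\equiv 1$; the Fourier lemmas bound the \emph{expectation} of the phase, not its modulus, so no cancellation survives this step. This is repairable: expand $S_j^4$ as a degree-$4$ multilinear polynomial in $(X_i-1)_i$ and note that each monomial spoils at most four indices in the product $\prod_i\mb{E}\exp(i(\xi_1+i\xi_2)(X_i-1))$, so the super-polynomial decay from \cref{lem:theta2-high,lem:theta2-mid,lem:theta1-region} survives, and the sum of absolute coefficients is only polynomial in $n$. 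With that fix (and the Lindeberg step you already flagged) your argument goes through, but the paper's truncate-then-compare-CDF approach is shorter precisely because it avoids both of these unbounded-weight issues.
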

\begin{proof}
The second estimate is trivial by linearity of expectation and \cref{lem:operator-decay} (\cref{M3}). For the first estimate, using linearity of expectation it suffices to show $\mb{E}|\sang{X,M_n^\ast \vec{e}_j}|^3=O(n^{3/2})$ uniformly for all $1\le j\le n$.

Note that $|\sang{X,M_n^\ast \vec{e}_j}|\ge\sqrt{n\log n}\log\log n$ occurs with probability $n^{-\omega(1)}$ in the independent model by Bernstein's inequality (\cref{thm:bernstein}), hence since $\mb{P}[T_1=T_2=0]=\Omega(n^{-2})$ from \cref{lem:conditional-mass} we have the same in the conditional model. The tail bound from \cref{thm:bernstein} is good enough that we can in fact obtain
\[\mb{E}|\sang{X,M_n^\ast \vec{e}_j}|^3\mbm{1}_{|\sang{X,M_n^\ast \vec{e}_j}|\ge\sqrt{n\log n}\log\log n}=n^{-\omega(1)}.\]
Now it suffices to consider ``reasonable'' scales for $|\sang{X,M_n^\ast \vec{e}_j}|$.

We apply \cref{thm:esseen} with $\sang{X,M_n^\ast\vec{e}_j}/\sqrt{n}$ conditional on $T_1=T_2=0$ and with $\sang{X',M_n^\ast \vec{e}_j}/\sqrt{n}$. Note that \cref{lem:fourier-coeff} shows an error of $O(n^{-1/2}(\log n)^{39})$ between the two resulting Fourier coefficients, and we deduce
\begin{align*}
&\sup_{\tau\in\mb{R}}|\mb{P}[\sang{X,M_n^\ast \vec{e}_j}\le\tau\sqrt{n}]-\mb{P}[\sang{X',M_n^\ast \vec{e}_j}\le\tau\sqrt{n}]|\\
&\qquad\lesssim\int_{-L}^L\frac{\min\{n^{-1/2}(\log n)^{39},\mb{E}[|t||\sang{X,M_n^\ast \vec{e}_j}-\sang{X',M_n^\ast \vec{e}_j}|]\}}{|t|}dt+1/L\\
&\qquad\lesssim\int_{-L}^L\frac{\min\{n^{-1/2}(\log n)^{39},|t|n^2\}}{|t|}dt+1/L\lesssim n^{-1/2}(\log n)^{40}+1/L\lesssim 1/L
\end{align*}
for $L=(\log n)^{7/4}$. We thus have
\begin{align*}
\mb{E}|\sang{X,M_n^\ast \vec{e}_j}|^3\mbm{1}_{|\sang{X,M_n^\ast \vec{e}_j}|<\sqrt{n\log n}\log\log n}&=\mb{E}|\sang{X',M_n^\ast \vec{e}_j}|^3\mbm{1}_{|\sang{X',M_n^\ast \vec{e}_j}|<\sqrt{n\log n}\log\log n}\\
&\qquad\qquad+O((\sqrt{n\log n}\log\log n)^3/L)=O(n^{3/2})
\end{align*}
by integration by parts.
\end{proof}

We will also require the following variant of \cref{lem:upper-tail} which bounds the difference between nearby coordinates.
\begin{lemma}\label{lem:round}
Let $X_j$, $T_1, T_2$ be as in \cref{def:triple-var-setup} and let $X = (X_1,\ldots,X_n)$. If $\eps \ge 1/\log\log n$, and $|j-j'|\le \eps n$ then we have
\[\mb{E}\bigg[\frac{||\sang{X,M_n^\ast \vec{e}_j}|^2-|\sang{X,M_n^\ast \vec{e}_{j'}}|^2|}{n}\bigg|T_1 = T_2 = 0\bigg]\le C_{\ref{lem:upper-tail}}\eps^{1/2}.\]
Similarly if $X_j'$ is as in \cref{lem:clt-region}, we have 
\[\mb{E}\bigg[\frac{||\sang{X',M_n^\ast \vec{e}_j}|^2-|\sang{X',M_n^\ast \vec{e}_{j'}}|^2|}{n}\bigg]\le C_{\ref{lem:upper-tail}}n.\]
\end{lemma}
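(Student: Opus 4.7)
The plan is to factorize $|U|^2 - |V|^2 = (U+V)(U-V)$ with $U := \sang{X, M_n^* \vec{e}_j}$ and $V := \sang{X, M_n^* \vec{e}_{j'}}$, and apply Cauchy--Schwarz, reducing matters to the two bounds $\mb{E}(U+V)^2 \lesssim n$ and $\mb{E}(U-V)^2 \lesssim \eps n$ (conditional on $T_1 = T_2 = 0$ in the first part, unconditional in the second). The crucial inputs from \cref{lem:operator-decay} are $\snorm{M_n^*(\vec{e}_j + \vec{e}_{j'})}_2^2 = O(n)$ from \cref{M4}, and $\snorm{w}_2^2 = O(|j-j'|) = O(\eps n)$ for the ``difference column'' $w := M_n^*(\vec{e}_j - \vec{e}_{j'})$, obtained from \cref{M7} by combining the bound $|(M_n^*)_{ij}-(M_n^*)_{ij'}|\lesssim |j-j'|/n$ for $i$ outside the interval $[j,j']$ with the trivial $O(1)$ bound for the $O(|j-j'|)$ indices inside it. In the Gaussian setting both bounds are immediate from independence, yielding $\mb{E}|U^2 - V^2|/n \lesssim \sqrt\eps$ by Cauchy--Schwarz (I read the stated Gaussian bound $C_{\ref{lem:upper-tail}} n$ as a typo for $C_{\ref{lem:upper-tail}}\eps^{1/2}$, since the claimed $C n$ bound is trivial).

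In the conditional setting, $\mb{E}[(U+V)^2 \mid T_1 = T_2 = 0] \lesssim n$ will follow from the uniform third-moment bound $\mb{E}[|\sang{X, M_n^* \vec{e}_j}|^3 \mid T_1 = T_2 = 0] = O(n^{3/2})$ established within the proof of \cref{lem:upper-tail}, combined with Jensen and $(a+b)^3 \le 4(a^3 + b^3)$. The critical remaining task is to prove $\mb{E}[\sang{X, w}^2 \mid T_1 = T_2 = 0] \lesssim \eps n$. The structural observation here is that $w$ is orthogonal to both $\vec{v}_1$ and $\vec{v}_2$, which follows from $\vec{v}_1^T M_n^* = \vec{v}_2^T M_n^* = 0$ by \cref{def:dice-matrix}. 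This orthogonality has the consequence that \emph{in the Gaussian conditional model}, $\sang{X, w}$ is independent of $(T_1, T_2)$, so its conditional variance equals the unconditional variance $\mr{Var}[\Delta]\snorm{w}_2^2 \lesssim \eps n$ on the nose.

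To transfer this estimate to the Geom/Poisson conditional model, I would apply Fourier inversion directly to the second moment,
\[\mb{E}[\sang{X - \vec 1, w}^2 \mbm{1}_{T_1 = T_2 = 0}] = (2\pi)^{-2}\int_{[-\pi, \pi]^2}\mb{E}\bigg[\sang{X - \vec 1, w}^2 e^{i\vec\xi \cdot (T_1, T_2)}\bigg] d\vec\xi,\]
splitting the integral into a central CLT region (where a Lindeberg exchange akin to \cref{lem:clt-region} replaces the $X_i - 1$ by matched Gaussians, after which the orthogonality of $w$ to $\vec{v}_1, \vec{v}_2$ causes the quadratic factor and the Fourier factor to decouple and integrate to $\mr{Var}[\Delta]\snorm{w}_2^2 \cdot \mb{P}_{\text{Gauss}}[T_1 = T_2 = 0]$) and a complementary region where \cref{lem:theta2-high,lem:theta2-mid,lem:theta1-region} applied to the coefficient sequence scaled from $w$ give super-polynomial Fourier decay. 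The main obstacle will be carrying out the exchange for an integrand that carries the extra quadratic weight $\sang{X - \vec 1, w}^2$: a naive influence bound for this degree-$4$ object loses polylog factors that would exceed the $\sqrt\eps$ threshold when $\eps \asymp 1/\log\log n$. The cleanest workaround is probably to expand $\sang{X - \vec 1, w}^2 = \sum_{i_1, i_2} w_{i_1} w_{i_2}(X_{i_1}-1)(X_{i_2}-1)$ and control each conditional covariance $\mb{E}[(X_{i_1}-1)(X_{i_2}-1) \mid T_1 = T_2 = 0]$ directly via a two-dimensional local CLT (analogous to the argument in \cref{lem:conditional-mass}), showing that it matches the Gaussian covariance $\mr{Var}[\Delta](\delta_{i_1 i_2} - v_{1, i_1}v_{1, i_2} - v_{2, i_1}v_{2, i_2})$ to within $O(n^{-3/2}\mr{polylog})$ uniformly; summing against the weights $w_{i_1}w_{i_2}$ using $\snorm{w}_1^2 \le n\snorm{w}_2^2 = O(\eps n^2)$ then absorbs the error safely into the main term $\mr{Var}[\Delta]\snorm{w}_2^2$.
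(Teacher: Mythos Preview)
Your reduction via $|U^2-V^2|=|U-V|\,|U+V|$ and Cauchy--Schwarz, together with the input $\snorm{M_n^\ast(\vec e_j-\vec e_{j'})}_2^2=O(|j-j'|)$ from \cref{M7} and the bound on $\mb{E}[(U+V)^2\mid T_1=T_2=0]$ via \cref{lem:upper-tail}, matches the paper's opening exactly (and yes, the stated Gaussian bound $C_{\ref{lem:upper-tail}}n$ should read $C_{\ref{lem:upper-tail}}\eps^{1/2}$).

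Where you diverge is in establishing $\mb{E}[\sang{X,w}^2\mid T_1=T_2=0]\lesssim\eps n$. The paper does not compute conditional covariances: instead it compares the \emph{distribution function} of $n^{-1/2}\sang{X,w}$ (conditional on $T_1=T_2=0$) to its Gaussian analogue using the smoothing inequality \cref{thm:fourier-convert} together with the characteristic-function estimate of \cref{lem:fourier-coeff} (applied with $M=2$ and a pair of opposite frequencies), then truncates the tail beyond $\sqrt{|j-j'|\log n}\,\log\log n$ via Bernstein and recovers the second moment by integration by parts. This route simply recycles the existing black box \cref{lem:fourier-coeff} and avoids opening up the covariance structure. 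Your route --- expanding $\sang{X-\vec 1,w}^2$ and controlling each $\mb{E}[(X_{i_1}-1)(X_{i_2}-1)\mid T_1=T_2=0]$ by a two-dimensional local CLT --- also works and is arguably more transparent, since the orthogonality $w\perp\vec v_1,\vec v_2$ makes the main term visibly $\mr{Var}[\Delta]\snorm{w}_2^2$. One small correction: the \emph{diagonal} covariance error is only $O(n^{-1/2}\,\mathrm{polylog})$, not $O(n^{-3/2}\,\mathrm{polylog})$, because the third moment of $X-1$ is nonzero; but since the diagonal weights sum to $\snorm{w}_2^2=O(\eps n)$ this still fits well within the $\eps n$ budget.
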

\begin{proof}
As $|y^2-z^2| = |y-z|\cdot |y+z|\le |y-z| \cdot (|y|+|z|)$, by Cauchy--Schwarz it suffices to prove that 
\[\mb{E}\bigg[\frac{|\sang{X,M_n^\ast \vec{e}_j}|^2}{n}\bigg|T_1 = T_2 = 0\bigg]\le C_{\ref{lem:upper-tail}},\qquad\mb{E}\bigg[\frac{|\sang{X,M_n^\ast \vec{e}_j}-\sang{X,M_n^\ast \vec{e}_{j'}}|^2}{n}\bigg|T_1 = T_2 = 0\bigg]\le C_{\ref{lem:upper-tail}}\eps\]
and analogous estimates for $X'$. The two estimates follow immediately for $X'$ by \cref{lem:operator-decay} (in particular \cref{M7} for the second estimate). For $X$, note that by H\"older's inequality a strictly stronger estimate than the first is proven in \cref{lem:upper-tail}. For the second estimate, note that if $|j-j'|\le n/((\log n)(\log\log n)^3)$, we have from Bernstein's inequality (\cref{thm:bernstein}):
\[\mb{E}[|\sang{X,M_n^\ast \vec{e}_j-\vec{e}_{j'}}|^2\mbm{1}_{|\sang{X,M_n^\ast \vec{e}_j-\vec{e}_{j'}}|\ge\eps n}]\le n^{-\omega(1)}.\]
Since $\mb{P}[T_1=T_2=0]=\Omega(n^{-2})$ from \cref{lem:conditional-mass} we have the same in the conditional model. For the remaining values of $j$ and $j'$, from the proof technique in \cref{lem:upper-tail} we have 
\begin{align*}
&\sup_{\tau\in\mb{R}}|\mb{P}[\sang{X,M_n^\ast (\vec{e}_j-\vec{e}_{j'})}\le\tau\sqrt{n}]-\mb{P}[\sang{X',M_n^\ast (\vec{e}_j-\vec{e}_{j'})}\le\tau\sqrt{n}]|\lesssim \frac{\sqrt{n}}{T\sqrt{j-j'}}
\end{align*}
for $T=(\log n)^{7/4}$, using $\snorm{M_n^\ast (\vec{e}_j-\vec{e}_{j'})}_2\asymp\sqrt{j-j'}$. By Bernstein's inequality (\cref{thm:bernstein}) we have $\mbm{E}[|\sang{X,M_n^\ast (\vec{e}_j-\vec{e}_{j'})}|^2\mbm{1}_{|\sang{X,M_n^\ast (\vec{e}_j-\vec{e}_{j'})}|\ge \sqrt{(j-j')\log n}(\log\log n)}] = n^{-\omega(1)}$ and therefore the same holds in the conditional model. Thus, similar to the proof of \cref{lem:upper-tail}, we can use this to cut off the values, transfer to the Gaussian model using integration by parts, and bound the resulting expressions. The desired follows.
\end{proof}

We next deduce that $\min_{a,b\in \mb{R}}\sum_{j}(c_j-a-bj)^2$ satisfies an appropriate anticoncentration bound near $0$ with high probability, so as to control singularity behavior.
\begin{lemma}\label{lem:lower-bound}
Let $X_j$, $T_1, T_2$ be as in \cref{def:triple-var-setup} and let $\eps \ge 1/\log n$. Let $X=(X_1,\ldots,X_n)$. We have 
\[\mb{P}\bigg[\sum_{j=1}^n\sang{X,M_n^\ast \vec{e}_j}^2\le \eps n^2\bigg|T_1= T_2=0\bigg]\lesssim\eps^4.\]
\end{lemma}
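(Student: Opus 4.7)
The plan is to reduce the quadratic anticoncentration event $\{\snorm{M_n^\ast X}_2^2\le\eps n^2\}$ to a multidimensional linear anticoncentration event through a Markov-type sampling argument, and then to bound the latter by $O(\eps^4)$ using Ess\'een's inequality (\cref{thm:esseen}) together with the Fourier transfer provided by \cref{lem:fourier-coeff}.

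Partition $[n]$ into $8$ blocks $Q_1,\ldots,Q_8$ of roughly equal size, and sample $j_l\in Q_l$ uniformly and independently for each $1\le l\le 8$. Writing $Y_j:=\sang{X,M_n^\ast\vec{e}_j}/\sqrt{n}$ and $A:=\{\sum_{j=1}^nY_j^2\le\eps n\}$, if $A$ holds then $\sum_{j\in Q_l}Y_j^2\le\eps n$, so $\mb{E}_{j_l}[Y_{j_l}^2]\le 8\eps$. By Markov's inequality and independence across $l$,
\[\mb{P}_j\bigl[Y_{j_l}^2\le 16\eps\text{ for all }l\,\big|\,A\bigr]\ge 2^{-8}.\]
Setting $B_j:=\{Y_{j_l}^2\le 16\eps\text{ for all }l\}$, this yields $\mb{P}_X[A\,|\,T_1=T_2=0]\le 256\cdot\mb{E}_j\mb{P}_X[B_j\,|\,T_1=T_2=0]$.

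For each fixed $(j_l)$, apply \cref{thm:esseen} to the random vector $(Y_{j_l})_{l=1}^8$ with target $0$ and radius $\sqrt{128\eps}$; since $B_j\subseteq\{\sum_l Y_{j_l}^2\le 128\eps\}$, this yields
\[\mb{P}_X[B_j\,|\,T_1=T_2=0]\lesssim\eps^4\int_{\snorm{\xi}_2\le 1/\sqrt{2\eps}}\Big|\mb{E}\bigl[\exp(2\pi i\xi\cdot(Y_{j_l}))\,\big|\,T_1=T_2=0\bigr]\Big|d\xi.\]
Since $\eps\ge 1/\log n$ forces $\snorm{\xi}_\infty\le\sqrt{\log n/2}\ll(\log n)^{7/4}$, we may invoke \cref{lem:fourier-coeff} with $M=8$, replacing the conditional characteristic function by its Gaussian analog at cost $O(n^{-1/2}(\log n)^{39})$ per point; this yields an error integral of $O(n^{-1/2}(\log n)^{43})=o(1)$. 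The Gaussian characteristic function equals $\exp(-2\pi^2\mr{Var}[\Delta]\cdot\xi^T\Sigma(j)\xi)$ with $\Sigma(j)_{l,l'}:=\sang{M_n^\ast\vec{e}_{j_l},M_n^\ast\vec{e}_{j_{l'}}}/n$, and extending the domain of integration to all of $\mb{R}^8$ gives the upper bound $O(1/\sqrt{\det\Sigma(j)})$.

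The main obstacle, and the only remaining step, is to verify that $\det\Sigma(j)\ge c_0>0$ uniformly for $(j_l)\in\prod_l Q_l$ and all sufficiently large $n$. Using \cref{M1}, $(M_n^\ast)_{ij}=2f(x_i,y_j)+O(1/n)$ where $x,y\in[-1,1]$ are the rescaled indices, so a Riemann-sum argument (valid since the $y_l$ sit in disjoint compact subintervals of $(-1,1)$, hence are bounded away from each other) gives $\Sigma(j)_{l,l'}=2\sang{\mc{A}(\cdot,y_l),\mc{A}(\cdot,y_{l'})}_{L^2([-1,1])}+O(1/n)$. The limiting Gram matrix $G(y):=(\sang{\mc{A}(\cdot,y_l),\mc{A}(\cdot,y_{l'})})_{l,l'}$ is continuous in $y$ on the compact product of these subintervals, and is positive definite there: indeed, $f(\cdot,y)$ has a jump discontinuity of size $\tfrac12$ at $y$, so $\{f(\cdot,y_l)\}_{l=1}^8$ are linearly independent in $L^2([-1,1])$ whenever the $y_l$ are distinct. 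By compactness $\det G(y)\ge c_0'>0$ uniformly, and hence $\det\Sigma(j)\ge c_0>0$ for $n$ large. Assembling all the bounds gives $\mb{P}_X[A\,|\,T_1=T_2=0]\lesssim 256\cdot\eps^4\cdot O(1)=O(\eps^4)$, as desired.
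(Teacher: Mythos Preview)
Your overall strategy is essentially the same as the paper's: pass from the quadratic event to a multilinear small-ball event via a Markov/sampling argument, apply Ess\'een and the Fourier transfer \cref{lem:fourier-coeff}, and reduce to a uniform lower bound on a Gaussian Gram determinant. The paper does this with \emph{differences} $M_n^\ast(\vec e_{j_t'}-\vec e_{j_t})$ for nearby $j_t,j_t'$, which by \cref{M1} are essentially indicators of disjoint intervals and hence visibly near-orthogonal; you instead use the columns $M_n^\ast\vec e_{j_l}$ themselves and appeal to linear independence of $f(\cdot,y_l)$ via their jump discontinuities. Both routes work in principle, and your linear-independence argument is a nice alternative.

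There is, however, a genuine gap in your compactness step. You partition $[n]$ into eight \emph{adjacent} blocks $Q_l$, so the rescaled sampling intervals for $y_{j_l}$ are adjacent closed subintervals of $[-1,1]$ that share endpoints. Your claim that the $y_l$ are ``bounded away from each other'' is therefore false: when $j_l$ is near the right edge of $Q_l$ and $j_{l+1}$ near the left edge of $Q_{l+1}$, the gap $|y_{j_l}-y_{j_{l+1}}|$ is of order $1/n$. On the closure of the product of these intervals the Gram matrix $G(y)$ is singular (two columns coincide), so the compactness argument cannot give $\det G(y)\ge c_0'>0$ uniformly. Consequently your bound $\mb P_X[B_j\,|\,T_1=T_2=0]\lesssim\eps^4/\sqrt{\det\Sigma(j)}$ does not yield an $O(\eps^4)$ bound uniformly in $j$.

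The fix is immediate: partition $[n]$ into sixteen equal blocks and sample only from the eight odd-numbered ones. The sampling/Markov step is unchanged up to constants (now $\mb E_{j_l}[Y_{j_l}^2]\le 16\eps$), and the eight rescaled intervals are now separated by gaps of width $1/8$, so your compactness argument goes through verbatim to give $\det G(y)\ge c_0'>0$ uniformly and hence $\det\Sigma(j)\ge c_0>0$ for $n$ large. (Also, a minor notational point: you write $\mc A(\cdot,y_l)$ where you mean the kernel $f(\cdot,y_l)$; $\mc A$ is the operator.)
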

\begin{proof}
Let $k$ be a sufficiently large absolute integer constant. For $1\le t\le k$ define $S_t = [tn/(k+1), tn/(k+1)  + \delta^2n]$, $S_t' = [tn/(k+1)+ 2\delta^2n, tn/(k+1)  + 3\delta^2n]$ where $\delta\in(0,1/2)$ will be a sufficiently small constant (with respect to $k$) to be chosen later. Let
\[\mc{I} = \{(j_1,j_1',j_2,j_2',\ldots,j_k,j_k')|\colon j_t\in S_t, j_t'\in S_t', |\sang{X,M_n^\ast(\vec{e}_{j_t'}-\vec{e}_{j_t})}|\le \delta^{-2}\eps^{1/2}n^{1/2}\}.\]
If $\sum_{j=1}^n\sang{X,M_n^\ast\vec{e}_j}^2\le \eps n^2$, by Markov's inequality there are fewer than $\delta^4n$ indices $j$ such that $|\sang{\vec{x},M_n^\ast\vec{e}_j}|\ge\delta^{-2}\eps^{1/2}n^{1/2}$. Therefore it follows that $|\mc{I}|\ge (\delta^2n/2)^{2k}$ under this event. 

We now compute 
\begin{align}
\mb{E}[|\mc{I}||T_1= T_2 = 0] &\le\sum_{\substack{j_t\in S_t, j_t'\in S_t'\\\forall t\in[k]}} \mb{P}\bigg[\bigcap_{t = 1}^{k}|\sang{X,M_n^\ast(\vec{e}_{j_t'}-\vec{e}_{j_t})}|\le \delta^{-2}\eps^{1/2}n^{1/2}\bigg]\notag\\
&\lesssim_{k,\delta}\eps^{k/2}n^{2k}\label{eq:moment-size}
\end{align}
if one can prove for any choices of $j_t\in S_t, j_t'\in S_t'$ for $1\le t\le k$ that 
\begin{equation}\label{eq:moment-reduction}
\mb{P}\bigg[\bigcap_{t=1}^k|\sang{X,M_n^\ast(\vec{e}_{j_t'}-\vec{e}_{j_t})}|\le \delta^{-2}\eps^{1/2}n^{1/2}\bigg]\lesssim_{k,\delta} \eps^{k/2}.
\end{equation}
Note \cref{eq:moment-size} immediately implies the desired result taking $k = 8$ and applying Markov's inequality: we find that $|\mc{I}|\ge(\delta^2n/2)^{2k}$ occurs with probability $O(\eps^4)$, which implies the same for the original event by the earlier analysis.

To prove \cref{eq:moment-reduction} the idea is to use \cref{thm:esseen,lem:fourier-coeff}. Writing $B$ for the radius $k\delta^2/\eps^{1/2}$ unit ball in $\mb{R}^k$, we have
\begin{align*}
\mb{P}\bigg[\bigcap_{t=1}^k&\frac{|\sang{X,M_n^\ast(\vec{e}_{j_t'}-\vec{e}_{j_t})}|}{\sqrt{n}}\le\delta^{-2}\eps^{1/2}\bigg]\\
&\lesssim_{\delta,k}\eps^{k/2}\int_B\bigg|\mb{E}\exp\bigg(2\pi i\sum_{t=1}^k\frac{\xi_t\sang{X,M_n^\ast(\vec{e}_{j_t'}-\vec{e}_{j_t})}}{\sqrt{n}}\bigg)\bigg|d\vec{\xi}\\
&\lesssim_{\delta,k}\eps^{k/2}\int_B\bigg|\mb{E}\exp\bigg(2\pi i\sum_{t=1}^k\frac{\xi_t\sang{\wt{X},M_n^\ast(\vec{e}_{j_t'}-\vec{e}_{j_t})}}{\sqrt{n}}\bigg)\bigg|d\vec{\xi}+O(n^{-1/2}(\log n)^{39})\\
&\lesssim_{\delta,k}\eps^{k/2}q+O(n^{-1/2}(\log n)^{39})
\end{align*}
where $q$ is the probability density function of the Gaussian vector $(n^{-1/2}\sang{\wt{X},M_n^\ast(\vec{e}_{j_t'}-\vec{e}_{j_t})})_{1\le t\le k}$ evaluated at $0$. For the last line, we used the nonnegativity of Gaussian characteristic functions and Fourier inversion. Now we show $q=O_{\delta,k}(1)$ to finish.

Note that by \cref{M1} and explicit computation we have
\[M_n^\ast(\vec{e}_{j_t'}-\vec{e}_{j_t}) = \frac{1}{2}\sum_{j_t<k<j_t'}\vec{e}_k + \vec{v}_{j_t,j_t'}\]
where $\snorm{\vec{v}_{j_t,j_t'}}_2\lesssim\delta^2n^{1/2}$ and note that $\snorm{\sum_{j_t<k<j_t'}\vec{e}_k}_2\gtrsim\delta n^{1/2}$. Therefore we find that 
\[\mr{dist}(M_n^\ast(\vec{e}_{j_t'}-\vec{e}_{j_t}), \mr{span}_\mb{R}\{M_n^\ast(\vec{e}_{j_s'}-\vec{e}_{j_s})\colon s\in[k]\setminus\{t\}\})\ge \snorm{M_n^\ast(\vec{e}_{j_t'}-\vec{e}_{j_t})}_2/2\]
if $\delta$ is sufficiently small as a function of $k$. This implies that the covariance matrix of the above Gaussian vector is diagonally dominated with constant order diagonal entries, and the result follows.
\end{proof}

We now conclude with the proof of \cref{thm:ties}.
\begin{proof}[Proof of \cref{thm:ties}]
Given a die $B$ with the frequency counts $(\wt{b}_j)_{1\le j\le n}$, let $y_j=\sum_{1\le k<j}\wt{b}_k +\wt{b}_j/2-(j-1/2)$. We say $B$ is \emph{suitable} if $(y_j)_{1\le j\le n}$ is coarse (\cref{def:coarse}). By \cref{lem:coefficient-sequence} we have that $B$ is coarse with probability $1-n^{-\omega(1)}$. Furthermore, write $\wt{b} = (\wt{b}_1,\ldots,\wt{b}_n)$ and note
\begin{equation}\label{eq:ols-R-score}
\min_{a,b\in\mb{R}}\sum_{j=1}^n(y_j-aj-b)^2 = \snorm{M_n^\ast\wt{b}}_2^2,
\end{equation}
which can be seen by looking at projections of vectors in $\mb{R}^n$, and also noting that $\sum_{j=1}^n(\wt{b}_j-1)=\sum_{j=1}^nj(\wt{b}_j-1)=0$.

Recall that by \cref{lem:crucial}, $(\wt{b}_1,\ldots,\wt{b}_n)$ has the distribution of $(X_1,\ldots,X_n)$ conditional on $T_1=T_2=0$, borrowing the setup of \cref{def:triple-var-setup} (where $\Delta=\mr{Geom}(1/2)$ for the multiset model and $\Delta=\mr{Pois}(1)$ for the balanced sequence model).

Therefore, applying \cref{lem:conditional-mass} we find
\begin{align*}
\mb{P}[A\text{ ties } B] &= \mb{P}[A\text{ ties }B \wedge B\text{ is suitable}] \pm n^{-\omega(1)}\\
&=\mb{E}_B\big[\mb{P}[A\text{ ties }B|\text{suitable }B]\big] \pm n^{-\omega(1)}\\
&= \mb{E}\bigg[\frac{\mbm{1}_{B\text{ is suitable}}}{(8\pi\mr{Var}[\Delta])^{1/2}\snorm{M_n^\ast\wt{b}}_2}\bigg] \pm n^{-3/2}(\log n)^{40}.
\end{align*}
The final line follows from using \cref{lem:crucial} to interpret the probability of a tie as the ratio of the two expressions in \cref{lem:conditional-mass}, and using \cref{S4} to control the resulting error terms.

Fix a constant $\eps > 0$. We will take $\eps\to 0^+$ sufficiently slowly at the end of the proof. We have
\begin{align*}
\mb{E}\bigg[\frac{\mbm{1}_{B\text{ is suitable}}}{\snorm{M_n^\ast\wt{b}}_2} - \frac{\mbm{1}_{B\text{ is suitable}}}{\max(\snorm{M_n^\ast\wt{b}}_2, \eps n)}\bigg] &\lesssim n^{-\omega(1)} + \sum_{(\log n)^{-2}\le 2^{-j} \le \eps} \frac{2^{j}}{n}\cdot \mb{P}[\snorm{M_n^\ast\wt{b}}_2\le 2^{-j}n]\\
&\lesssim n^{-\omega(1)} + \sum_{(\log n)^{-2}\le 2^{-j} \le \eps} \frac{2^j}{n}\cdot 4^{-j}\\
&\lesssim\frac{\eps}{n}
\end{align*}
where we have dyadically decomposed the small values of $\snorm{M_n^\ast\wt{b}}_2$ and applied \cref{lem:lower-bound} (we apply the lemma for $\max(2^{-j},(\log n)^{-1})$). Note that $\sang{X,M_n^\ast\vec{e}_j}=-\sang{M_n^\ast X,\vec{e}_j}$ is the negative of the $j$th coordinate of $M_n^\ast X$ by \cref{M2}, and we again used \cref{lem:crucial}. Additionally, we are using that $\snorm{M_n^\ast\vec{b}}_2\ge n(\log n)^{-2}$ for coarse $\vec{b}$ by \cref{eq:ols-R-score,S4}.

Therefore
\begin{align*}
(8\pi\mr{Var}[\Delta])^{1/2}\mb{P}[A\text{ ties } B] &= \mb{E}\bigg[\frac{\mbm{1}_{B\text{ is suitable}}}{\max(\snorm{M_n^\ast\wt{b}}_2, \eps n)}\bigg] \pm O(\eps n^{-1})\\
&= \mb{E}\bigg[\frac{1}{\max(\snorm{M_n^\ast\wt{b}}_2, \eps n)}\bigg] \pm O(\eps n^{-1})
\end{align*}
proved that $n$ is sufficiently large with respect to $\eps$.

The next idea is to approximate $\snorm{M_n^\ast\wt{b}}_2$ via sampling random coordinates $j_1,\ldots,j_T$ for a sufficiently large value of $T$ and then estimating the $L^2$-norm of the vector $M_n^\ast\wt{b}$ via examining only these coordinates. This converts understanding a quadratic form into a question of purely linear forms. Let $\mc{E}_1$ denote the event that $\snorm{M_n^\ast\wt{b}}_3/n^{1/3}\le\eps^{-1}n^{1/2}$. \cref{lem:upper-tail} and Markov's inequality implies that $\mc{E}_1$ holds with probability at least $1-\eps^3$, hence 
\begin{equation}\label{eq:third-moment-truncate}
\mb{E}\bigg[\frac{1}{\max(\snorm{M_n^\ast\wt{b}}_2, \eps n)}\bigg]= \mb{E}\bigg[\frac{\mbm{1}_{\mc{E}_1}}{\max(\snorm{M_n^\ast\wt{b}}_2, \eps n)}\bigg] \pm O(\eps^2n^{-1}).
\end{equation}

Let $\psi\colon\mb{R}\to\mb{R}_{\ge 0}$ be smooth with $\psi(y)=0$ for $y\le\eps/2$ and $\psi(y)=y$ for $y\ge\eps$, $0\le\psi(y)\le y$ for $y\in[\eps/2,\eps]$, and such that $\psi$ is $O(1/\eps)$-lipschitz (a construction can be derived in a standard manner using bump functions). Let $g(x):=\psi(1/\max(x,\eps))$. Then
\begin{equation}\label{eq:tie-probability-1}
(8\pi\mr{Var}[\Delta])^{1/2}n\mb{P}[A\text{ ties }B]=\mb{E}[\mbm{1}_{\mc{E}_1}g(\snorm{M_n^\ast\wt{b}}_2/n)]\pm O(\eps^2)
\end{equation}
since $\mc{E}_1$ implies $\snorm{M_n^\ast\wt{b}}_2/n^{1/2}\le\eps^{-1}n^{1/2}$.

Now, by \cref{lem:resample-estimate}, given the event $\mc{E}_1$ a uniformly random sample of $T=\lfloor\eps^{-100}\rfloor$ independent coordinates $j_1,\ldots, j_T$ satisfies
\[\bigg|\snorm{M_n^\ast\wt{b}}_2^2 - \frac{n}{T}\sum_{1\le k\le T}\sang{\vec{e}_{j_k},M_n^\ast\wt{b}}^2\bigg|\le\eps^{20}n\]
with probability at least $1-\eps^{10}$. It follows immediately that
\begin{equation}\label{eq:transfer-linear}
\bigg|\mb{E}[\mbm{1}_{\mc{E}_1}g(\snorm{M_n^\ast\wt{b}}_2/n)]-\mb{E}\bigg[\mb{E}\bigg[g\bigg(\bigg(\frac{1}{nT}\sum_{k=1}^T\sang{M_n^\ast\wt{b},\vec{e}_{j_k}}^2\bigg)^{1/2}\bigg)\bigg|j_1,\ldots,j_T\bigg]\bigg]\bigg|\lesssim\eps^{10}+(1-\eps^{10})\eps^{19}+\eps^2,
\end{equation}
using that $g(y)\in[0,\eps^{-1}]$. Let $L=\lfloor\eps^{-1000}\rfloor$ and $n'=\lfloor\eps^{1000}n\rfloor$. For each $k$ let $\wt{j}_k$ denote the nearest index to $j_k$ in the set $\{n',2n',\ldots,Ln'\}$. Since $g$ is appropriately Lipschitz and applying \cref{lem:round}, we have
\begin{align*}
\bigg|\mb{E}\bigg[\mb{E}\bigg[g\bigg(\bigg(\frac{1}{nT}&\sum_{k=1}^T\sang{M_n^\ast\wt{b},\vec{e}_{j_k}}^2\bigg)^{1/2}\bigg)\bigg|j_1,\ldots,j_T\bigg]\bigg]-\mb{E}\bigg[\mb{E}\bigg[g\bigg(\bigg(\frac{1}{nT}\sum_{k=1}^T\sang{M_n^\ast\wt{b},\vec{e}_{\wt{j}_k}}^2\bigg)^{1/2}\bigg)\bigg|j_1,\ldots,j_T\bigg]\bigg]\bigg|\\
&\lesssim\eps^{100}.
\end{align*}

Now by L\'evy continuity, \cref{lem:crucial,lem:fourier-coeff}, and \cref{M1} we see that the distributions of
\[\bigg(\frac{\sang{M_n^\ast\wt{b},\vec{e}_{kn'}}}{\sqrt{n}}\bigg)_{1\le k\le L},\qquad\bigg(\frac{\sang{M_n^\ast\wt{X},\vec{e}_{kn'}}}{\sqrt{n}}\bigg)_{1\le k\le L}\]
converge jointly to a fixed distribution independent of $n$ (but depending on $\eps$). As $g$ is a bounded and continuous function, for $n$ sufficiently large by the Portmanteau theorem we deduce that 
\begin{align}
\bigg|\mb{E}\bigg[\mb{E}\bigg[g\bigg(\bigg(\frac{1}{nT}&\sum_{k=1}^T\sang{M_n^\ast\wt{b},\vec{e}_{\wt{j_k}}}^2\bigg)^{1/2}\bigg)\bigg|j_1,\ldots,j_T\bigg]\bigg]-\mb{E}\bigg[\mb{E}\bigg[g\bigg(\bigg(\frac{1}{nT}\sum_{k=1}^T\sang{M_n^\ast\wt{X},\vec{e}_{\wt{j_k}}}^2\bigg)^{1/2}\bigg)\bigg|j_1,\ldots,j_T\bigg]\bigg]\bigg|\notag\\
&\le \eps^2,\label{eq:transfer-gaussian}
\end{align}
say. Finally, using \cref{lem:resample-estimate,lem:round,lem:upper-tail} in the Gaussian model instead and mimicking the above argument (for \cref{eq:transfer-linear}) in reverse demonstrates
\begin{equation}\label{eq:transfer-linear-gaussian}
\bigg|\mb{E}[\mbm{1}_{\mc{E}_2}g(\snorm{M_n^\ast\wt{X}}_2/n)]-\mb{E}\bigg[\mb{E}\bigg[g\bigg(\bigg(\frac{1}{nT}\sum_{k=1}^T\sang{M_n^\ast\wt{X},\vec{e}_{\wt{j}_k}}^2\bigg)^{1/2}\bigg)\bigg|j_1,\ldots,j_T\bigg]\bigg]\bigg|\lesssim\eps^2
\end{equation}
where $\mc{E}_2$ is the event that $\snorm{M_n^\ast\wt{X}}_3/n^{1/3}\le\eps^{-1}n^{1/2}$. Combining \cref{eq:tie-probability-1,eq:transfer-linear,eq:transfer-gaussian,eq:transfer-linear-gaussian}, we deduce
\[(8\pi\mr{Var}[\Delta])^{1/2}n\mb{P}[A\text{ ties }B]=\mb{E}[\mbm{1}_{\mc{E}_2}g(\snorm{M_n^\ast\wt{X}}_2/n)]\pm O(\eps^2).\]
Using \cref{lem:upper-tail} for the Gaussian model and Markov's inequality, we easily find
\begin{equation}\label{eq:g-expression}
(8\pi\mr{Var}[\Delta])^{1/2}n\mb{P}[A\text{ ties }B]=\mb{E}[g(\snorm{M_n^\ast\wt{X}}_2/n)]\pm O(\eps^2).
\end{equation}
Finally, letting $W_\ell,W_\ell'\sim\mc{N}(0,\mr{Var}[\Delta])$ for $\ell\ge 1$ we see
\[\frac{\snorm{M_n^\ast\wt{X}}_2}{n}\overset{d.}{=}\bigg(\sum_{\ell=1}^{\lfloor n/2\rfloor}\frac{\sigma_{n,\ell}^2}{n^2}(W_\ell^2+W_\ell'^2)\bigg)^{1/2}\]
by a variant of the spectral theorem applied to $M_n^\ast$ and \cref{def:dice-matrix}. By \cref{M6,M9} and \cref{thm:concentration-hypercontractivity} we deduce
\begin{align*}
\mb{E}[g(\snorm{M_n^\ast\wt{X}}_2/n)]&=\mb{E}g\bigg(\bigg(\sum_{\ell=1}^{\lfloor n/2\rfloor}\frac{\sigma_{n,\ell}^2}{n^2}(W_\ell^2+W_\ell'^2)\bigg)^{1/2}\bigg)\\
&=\mb{E}g\bigg(\bigg(\sum_{\ell=1}^{\lfloor\eps^{-8}\rfloor}\frac{\sigma_{n,\ell}^2}{n^2}(W_\ell^2+W_\ell'^2)\bigg)^{1/2}\bigg)\pm O(\eps^2)\\
&=\mb{E}g\bigg(\bigg(\sum_{\ell=1}^{\lfloor\eps^{-8}\rfloor}\sigma_\ell^2(W_\ell^2+W_\ell'^2)\bigg)^{1/2}\bigg)\pm O(\eps^2)
\end{align*}
as long as $n$ is large in terms of $\eps$. We claim that taking the limit $\eps\to 0^+$ gives the result. To check this, we note that for all $\rho\ge 0$ we have
\[\mb{P}\big[\sum_{\ell\ge1}\sigma_\ell^2(W_\ell^2+W_\ell'^2)\le\rho\big]\le \mb{P}\bigg[\bigcap_{1\le\ell\le 5}\sigma_\ell^2(W_\ell^2+W_\ell'^2)\le \rho\bigg]\le\prod_{\ell=1}^5(\mb{P}[W_\ell^2\le K\rho]\mb{P}[W_\ell'^2\le K\rho])\lesssim\rho^5\]
for an appropriate absolute constant $K$. Additionally,
\[\mb{P}\big[\sum_{\ell\ge1}\sigma_\ell^2(W_\ell^2+W_\ell'^2)\ge\eps^{-1}\big]\le\exp(-\eps^{-\Omega(1)})\]
by \cref{thm:concentration-hypercontractivity}. Therefore, we can absorb the difference between $g(y)$ and $1/y$ without any issue, uniformly in the limit. That is,
\[\lim_{\eps\to 0^+}\mb{E}g\bigg(\bigg(\sum_{\ell=1}^{\lfloor\eps^{-8}\rfloor}\sigma_\ell^2(W_\ell^2+W_\ell'^2)\bigg)^{1/2}\bigg)=\mb{E}\bigg[\bigg(\sum_{\ell\ge 1}\sigma_\ell^2(W_\ell^2+W_\ell'^2)\bigg)^{-1/2}\bigg]\]
(note that $g$ depends on $\eps$ here). Combining these final equalities with \cref{eq:g-expression} and taking $\eps$ to go slowly to $0$, and letting $(W_\ell,W_\ell')=\sqrt{\mr{Var}[\Delta]}(Z_\ell,Z_\ell')$ for standard Gaussians $Z_\ell,Z_\ell'$, we ultimately deduce
\[(8\pi\mr{Var}[\Delta])^{1/2}n\mb{P}[A\text{ ties }B]=(\mr{Var}[\Delta])^{-1/2}\mb{E}\bigg[\bigg(\sum_{\ell\ge 1}\sigma_\ell^2(Z_\ell^2+Z_\ell'^2)\bigg)^{-1/2}\bigg]+o(1).\]
Rearranging, this agrees with the desired \cref{thm:ties}.
\end{proof}

We end by briefly discussing an (amusing) interpretation of the constant $\alpha$ corresponding to ordinary least squares regression in the context of Brownian motion. The above proof implicitly shows that given a fixed set of indices $j_1,\ldots,j_k$, $(\vec{e}_{j_t}M_n^\ast X)_{1\le t\le k}$ in distribution limits toward a snapshot of a Brownian motion at the times $j_t/n$ where the Brownian motion is conditioned to end at $0$ at time $1$ and conditioned to have total signed area under the Brownian motion equal to $0$. Note that then $\min_{a,b\in\mb{R}}(c_j-a-bj)^2$ corresponds to approximating such a Brownian motion by the best linear-function fit coming from Ordinary Least Squares regression. We leave making this precise an exercise for the reader.

\bibliographystyle{amsplain0.bst}
\bibliography{main.bib}

\end{document}